\newtheorem{theorem}{Theorem}[section]
\newtheorem{lemma}[theorem]{Lemma}
\newtheorem{proposition}[theorem]{Proposition}
\newtheorem{corollary}[theorem]{Corollary}
\theoremstyle{definition}
\newtheorem{remark}[theorem]{Remark}
\newcommand{\nl}{\nonumber\\}
\newcommand*{\rom}[1]{\expandafter\@slowromancap\romannumeral #1@}
\newcommand{\dmn}{{\Omega}}
\newcommand{\Hspace}[1]{H^1(#1,y^a)}
\newcommand{\cil}{{{\cal C}}}
\newcommand{\cilt}{{{\cal C}_{T}}}
\newcommand{\Hdir}[1]{\mathring{H}^1_{L}(#1,y^a)}
\newcommand{\Qint}{{\cal I}_{H}}
\newcommand{\norm}[1]{\left\lVert#1\right\rVert}
\newcommand{\eps}{\varepsilon}
\newcommand{\avrg}[2]{{\langle #1 \rangle}_{#2}}
\newcommand{\TwoNorm}[2]{{\left\lVert#1\right\rVert}_{L^2\left(#2,y^a\right)}}
\newcommand{\HNorm}[2]{{\left\lVert#1\right\rVert}_{H^1\left(#2,y^a\right)}}
\newcommand{\x}{{\bf x}}
\newcommand{\vnode}{{\bf v}}
\newcommand{\wnode}{{\bf w}}
\newcommand{\tr}[2]{\text{tr}_{#1}\left(   #2    \right)}
\begin{document}

\title{Numerical Homogenization of Heterogeneous Fractional Laplacians}

\author{%
Donald L. Brown \thanks{GeoEnergy Research Center (GERC), University of Nottingham, School of 
Mathematical Sciences,  \mbox{donald.brown@nottingham.ac.uk}}%
\and %
Joscha Gedicke \thanks{Faculty of Mathematics, University of Vienna, 1090 Vienna, Austria, 
\mbox{joscha.gedicke@univie.ac.at}}%
\and %
Daniel Peterseim \thanks{University of Augsburg, Department of Mathematics, 
\mbox{daniel.peterseim@math.uni-augsburg.de}}%
}

\maketitle

\begin{abstract}
In this paper, we develop a numerical multiscale method to solve the fractional Laplacian with a
heterogeneous diffusion coefficient.  When the coefficient is heterogeneous, this adds to the computational 
costs. 
Moreover,  the fractional Laplacian is a nonlocal operator in its standard form, however
the Caffarelli-Silvestre extension allows for a localization of the equations. This adds a complexity of an extra spacial 
dimension and a singular/degenerate coefficient depending on the fractional order. Using a sub-grid correction 
method, we correct the basis functions in a natural weighted Sobolev space and show that these corrections 
are able to be truncated to design a computationally efficient scheme with optimal convergence rates. A key 
ingredient of this method is the use of quasi-interpolation operators to construct the fine scale spaces. 
Since the solution of the extended problem on the critical boundary is of main interest, we construct a  
projective quasi-interpolation  that has both $d$ and $d+1$ dimensional averages over subsets in the spirit of 
the Scott-Zhang operator.  We show that this operator satisfies local stability and local approximation 
properties in weighted Sobolev spaces. We further show that we can obtain a greater rate of 
convergence for sufficient  smooth forces, 
and utilizing a global $L^2$ projection on the critical boundary. 
We present some numerical examples, utilizing our projective quasi-interpolation in 
dimension $2+1$ for analytic and heterogeneous cases to demonstrate the rates and effectiveness of the 
method.
\end{abstract}

{\small\noindent\textbf{Keywords:}
localization, multiscale methods, fractional Laplacian, heterogeneous diffusion
}

\section{Introduction}
In the modeling and simulation of porous media or composite materials, the multiscale nature of the materials 
is a challenging mathematical problem.  In addition to this challenge, 
the modeling of non-local behavior that naturally occurs in particular media
is of great interest, for example in the modeling of 
non-local mechanics \cite{silling2000reformulation}, fractional (and thus  non-local) Keller-Segel  
models of chemotaxis \cite{stinga2014fractional}, and in ground water flow  by fractional (non-Fickian) 
transport \cite{defterli2015fractional,meerschaert2006fractional}.
The areas of multiscale problems and non-local fractional problems have significant overlap in these applications.
In particular, it is well known in hydrology and reservoir engineering that the permeability of the subsurface is 
highly heterogeneous. 
The Macro-Dispersion Experiment (MADE) \cite{rehfeldt1992field} demonstrated experimentally  non-Darcy 
transport that exhibits non-local effects. The challenge of  simulating  these types of problems is two fold: 1) 
the heterogeneity of the subsurface properties creates the need for higher resolutions and 2) the non-locality 
effects the band structure of the linear solvers creating often dense matrices.  In this work, we present a 
multiscale method to mitigate both issues of non-locality and the heterogeneous properties.
\par
The model we will focus on in this work is the heterogeneous fractional Laplacian. This is the Darcy flow model 
with a multiscale permeability coefficient and a fractional derivative power to incorporate the non-local 
behavior. 
There is a vast literature on the analysis and simulation of the fractional Laplacian. Due to a relatively recent 
result of Caffarelli and Silvestre  \cite{caffarelli2007extension}, the solution of the fractional Laplacian 
is more tractable in both terms of analysis and computation. By adding an extra spatial dimension, the 
fractional Laplacian is transformed into an weighted harmonic extension problem, or a singular/degenerate 
(depending on fractional degree) linear elliptic problem. The numerical solution of such problems has been 
approached by several authors in \cite{nochetto2015pde} using quasi-interpolation, in 
\cite{bonito2015numerical} using a novel integral representation formula,  fractional equations are solved via a 
Petrov-Galerkin method in \cite{jin2016petrov}, and for tensor finite elements in \cite{SchwabFractional}, to 
name a few.
A recent survey article of numerical methods for fractional diffusion equations in homogeneous media can be 
found in \cite{bonitoNochetto}.
\par
%
The new challenge to be addressed in this paper is the derivation of effective numerical methods for fractional 
diffusion in heterogeneous media. The application of  numerical homogenization techniques has, to our 
knowledge, not been considered yet. The key idea of numerical homogenization being to incorporate scales on the 
fine-grid to the coarse-grid in a computationally feasible way. Several approaches exist to this end. The 
multiscale finite element method \cite{Hou:Wu:1997}, where local basis functions are computed, the 
heterogeneous multiscale method \cite{Abdulle:E:Engquist:Vanden-Eijnden:2012}, where local problems are 
solved to obtain coarse-grid coefficients, and the variational multiscale method \cite{MR1660141}, which is 
related to the technique we will use. 
We will employ the  local orthogonal decomposition (LOD) method. The LOD method
is a numerical homogenization method whereby  the coarse-grid is augmented so that the corrections are 
localizable and truncated to design a computationally efficient scheme 
\cite{HP13,Kornhuber.Peterseim.Yserentant:2016,MP11,Peterseim:2015}. This has been used successfully 
in many applications such as semi-linear problems \cite{HMP12}, waves equations \cite{Henningwave}, 
perforated media \cite{brown2016multiscale}, and diminishing the pollution in high-frequency problems 
\cite{brown2016multiscaleelastic,Brown2017}, to name a few. 
\par
A key component of this method is a quasi-interpolation operator that is utilized to construct a fine-scale space.
The construction of such an operator for the fractional Laplacian is slightly more delicate due to the extra resolution 
one wants near the trace of the weighted extension problem. 
%
The authors in \cite{nochetto2015pde} utilize a quasi-interpolation based on regularized Taylor polynomials 
\cite{brenner2007mathematical}, which are a generalization of the Cl\'{e}ment quasi-interpolation 
\cite{clement1975approximation}. However, these quasi-interpolations are not projective. We proceed similar 
to \cite{brown2016multiscale}, where the authors utilized a local $L^2$ projection onto the coarse-grid 
space, and prove local $L^2$ stability and approximabilty  properties in weighted Sobolev spaces based on arguments in  
\cite{bramble2002stability,bramble1991some}.  
For the weighted extension problem of the fractional Laplacian we would like 
to further resolve the information on the trace of the original domain. To this end, we develop a 
hybrid projective quasi-interpolation operator using techniques from \cite{bramble2002stability,scott1990finite}, whereby we 
use local $L^2$ projections for both $d$ and $d+1$ 
dimensional simplices to generate nodal values. With this quasi-interpolation, we prove the canonical 
convergence rate of $H^s$, $H$-coarse mesh size and $s$-fractional derivative degree, of the multiscale 
method on quasi-uniform meshes.  Supposing  more smoothness on the data and utilizing a slightly modified projection based on the 
gobal $L^2$ projection on the critical boundary, 
we are able to prove order $H$ convergence on the coarse-grid. We also prove the standard 
estimates with truncated corrections \cite{HMP12}.
\par
We present numerical results for two benchmark examples with the same forcing, but different 
diffusion coefficients, 
in the computational domain that is a subset of $\mathbb{R}^2 \times(0,T)$. The first being a homogeneous  
problem of which has an analytic solution, and the second utilizing a heterogeneous coefficient from a $2-d$ slice of 
a standard benchmark problem. We show that we numerically obtain optimal rates of convergence in these examples 
once we pass the pre-asymptotic regime in terms of the truncation of the correctors. We compute solutions for 
various fractional orders $s$ above and below the critical fractional value of $s=\frac{1}{2}$.
\par
This paper is organized as follows. We begin in Section \ref{prelim} with the heterogeneous fractional 
Laplacian and the singular/degenerate elliptic problem of the Caffarelli-Silvestre extension. 
The weighted extension problem decays exponentially in the extended direction and thus can be truncated on 
a finite domain, this is the problem we shall focus on in this work. In Section \ref{sobolevspaces}, we define the 
relevant fractional Sobolev Spaces for completeness and develop the theory of weighted Sobolev Spaces 
critical to the setup and analysis of the Caffarelli-Silvestre extension. We also present various relevant 
weighted inequalities, such as the weighted Poincar\'{e} inequality. Then, in Section \ref{quasi_int_section}, we 
define the weighted quasi-interpolation operator that will be used to construct the LOD method. Local 
approximability and stability in the weighted spaces are proved. The multiscale method and related errors are 
introduced in Section \ref{MSmethod}. We then present two numerical examples in Section \ref{numerics}. 
Finally, the proofs for the truncation of correctors in weighted norms are given in the Appendix  
\ref{truncproofsection}.

\section{Preliminaries}\label{prelim}
It is  well known that fractional Laplacian problems are  non-local.  Therefore, 
applying standard two-grid techniques to handle heterogeneous coefficients locally
is not possible as the sub-grid problems will too be non-local (in fact global). 
However, due to the Caffarelli-Silvestre extension \cite{caffarelli2007extension}, 
one is able to rewrite the non-local fractional Laplacian 
as a Dirichlet-to-Neumann mapping problem. 
This problem is localizable at the cost of a one dimension higher infinite domain and singular 
or degenerate coefficients depending on the fractional degree $s$. 
In this section we present the 
background on the fractional Laplace operator with a heterogeneous coefficient as well as the background on 
the Caffarelli-Silvestre extension problem.

\subsection{Heterogeneous Fractional Laplacian}
Let $\Omega\subset \mathbb{R}^d$ be a bounded, open, and connected Lipschitz domain for $d\geq 1$.
We let ${\cal L}_{A}u=-\text{div}_x\left(A(x)\nabla_x u \right)$, 
where $A\in \left(L^{\infty}(\dmn)\right)^{d\times d}$ is assumed to be symmetric 
and satisfies for all $x\in \dmn$, $\xi\in\mathbb{R}^d$, and some $\alpha,\beta>0$
\begin{align*}
 \alpha|\xi|^2\leq \avrg{A(x)\xi,\xi}{}\leq \beta |\xi|^2.
\end{align*}
We consider the following fractional Laplace equation with Dirichlet boundary condition, that is we
seek a solution $u$ that satisfies for $s\in (0,1)$ and given data $f$:
\begin{subequations}\label{fracLaplace}
\begin{align}
 \left({\cal L}_{A}u\right)^s&=f \quad\text{in } \dmn,\\
 \label{dirichlet}
 u&=0 \quad\text{on } \partial \dmn.
\end{align}
\end{subequations}
As shown in \cite {caffarelli2016fractional}, one can write the heterogeneous fractional Laplacian  
\eqref{fracLaplace} as 
\begin{align}\label{fracLaplaceoperator}
\left({\cal L}_{A}u\right)^s=\int_{\Omega} (u(x)-u(y))K_{s}(x,y) \,dy,
\end{align}
where $K_s(x,y)$ is the fundamental heat kernel to the operator ${\cal L}_{A}u$, and satisfies the bounds
 $$0\lesssim K_s(x,y)\lesssim \frac{1}{|x-y|^{d+2s}}, \, \, x\neq y.$$
In this work,  we will write  $a\lesssim b$, to mean that there exists a 
constant $C>0$ independent of the mesh 
parameters 
(but possibly depending on the domain, dimension, {$s$, $\alpha$, $\beta$ but not on variations of $A$}) 
such that $a \leq C b$.
Note that for the above integral formulation, one must compute the heat kernel 
for the heterogeneous operator which is computationally  costly. 
\par
The fractional Laplacian operator may also be defined via the eigenfunctions of ${\cal L}_{A}$ given by 
\begin{subequations}\label{spectral}
\begin{align}
-\text{div}_x\left(A(x)\nabla_x \phi_k \right)&=\mu_k \phi_k \quad\text{in } \Omega,\\
\phi_k&=0 \quad\text{on } \partial \Omega,
\end{align}
\end{subequations}
where  the eigenpairs {$(\mu_k ,\phi_k )\in \mathbb{R}_+ \times H^1_0(\Omega)$, for $k\in \mathbb{N}$, can 
be chosen such that $\{\phi_k\}_{ k\in \mathbb{N}}$ form an orthonormal basis for $L^2(\Omega)$}.  Supposing 
$u\in \text{Dom}( {\cal L}^s_{A})$, we expand $u$ as $u(x)=\sum_{k\in \mathbb{N}} u_k \phi_k(x)$, and 
define
\[
{\cal L}^s_{A}u=\sum_{k\in \mathbb{N}} \mu ^s_k u_k \phi_k,
\]
where $u_k=\int_{\Omega} \phi_k u \,dx$. 
 
\subsection{Caffarelli-Silvestre Extension Problem}
Using the formulation developed in \cite{caffarelli2016fractional,nochetto2015pde}, we reformulate the 
fractional Laplacian problem \eqref{fracLaplace} as an extension in  $\dmn\times (0,\infty)\subset \mathbb{R}^{d+1}$.  
We denote the cylinder $\cil=\dmn \times (0,\infty)$, 
spatial variables $x\in \mathbb{R}^d$ and $y\in \mathbb{R}$, 
and the lateral boundary $\partial_{L}\cil=\partial \dmn \times [0,\infty)$.  
We let $U=U(x,y):\cil\to \mathbb{R}$, be a solution to the following singular/degenerate elliptic equation with
coefficients $y^\alpha$:
\begin{subequations}\label{fracLaplace.local}
\begin{align}
  -\text{div}\left( y^a B(x)\nabla U   \right)&=0 \quad\text{in } \cil, \\
  \frac{\partial U}{\partial \nu^{a}}=-y^{a}\frac{\partial U}{\partial y}\Bigg|_{y=0}&=c_s f(x)  \quad\text{on }  \dmn, \\
  U&=0 \quad\text{on }\partial_{L}\cil.
\end{align}
\end{subequations}
The solution to \eqref{fracLaplace} is given by $u(x) = U(x,0)$ for $x\in {\Omega}$.
\par
Above, differential operators are given with respect to $x\in \mathbb{R}^d$ and $y\in \mathbb{R}$, i.e. 
$\nabla=(\nabla_x,\partial_y)^T$,
and the tensor $B\in \mathbb{R}^{d+1}\times  \mathbb{R}^{d+1}$ is given by
\[
B(x)=
\begin{bmatrix}
A(x) & 0_{d\times1} \\
0_{1\times d} & 1
\end{bmatrix},
\]
for $a=1-2s\in (-1,1)$ or $s=\frac{a-1}{2}\in (0,1)$. We will often move freely between the fractional degree $s$ 
and the power of the weight $a$.  Here, $\frac{\partial U}{\partial \nu^{a}}$ is the co-normal exterior derivative 
with outer unit normal $\nu$ and $c_s=2^{1-2s}\frac{\Gamma(1-s)}{\Gamma(s)}>0$ 
is a positive constant that solely depends on $s$.
\par
We note that, supposing appropriate data $f$,  $u$ is a solution of the heterogeneous fractional Laplacian 
\eqref{fracLaplace} if and only if $U$ is a solution to the weighted harmonic extension \eqref{fracLaplace.local}. 
The solution to the weighted harmonic extension is related to the spectral representation of the solution of the 
fractional Laplace. We write $u(x)=\sum_{k\in \mathbb{N}}u_k\phi_k(x)$, where $\{ \phi_k\}_{k\in \mathbb{N}}$ 
satisfy \eqref{spectral}, then we have from \cite{brandle2013concave,capella2011regularity}, that we may write
\[
U(x,y)=\sum_{k\in \mathbb{N}}u_k\phi_k(x) \psi_k(y),
\]
where $\psi_k(y)$ satisfies 
\[
\psi^{''}_k+\frac{a}{y}\psi'_k-\mu_k \psi_k=0 \text{ in } (0, \infty),
\]
with the boundary conditions $\psi_k(0)=1,$ and $\lim_{y\to \infty}\psi_k(y)=0$, for all $k \in \mathbb{N}$.
This above equation has a known solution from \cite{cabre2010positive,capella2011regularity}, that is
$\psi_k(y)=\exp(-\sqrt{\mu_k}y)$ if $s=\frac{1}{2}$ and 
$\psi_k(y)=C_s \left(\sqrt{\mu_k}y \right)^s K_s(\sqrt{\mu_k}y)$, for $s\in (0,1) \backslash \{\frac{1}{2}\}$, where 
$K_s$ is the modified Bessel function of second kind. 
Therefore the solution decreases exponentially in the 
$y$-direction, allowing to truncate the computational domain.
\begin{remark}
Naturally, $f$ is in the dual-space $\mathbb{H}^{-s}(\Omega)$ of the fractional space $\mathbb{H}^{s}$ 
(to be defined more precisely in Section \ref{fractionalspacesection}). However, we will often take 
$f$ to be more regular and suppose $f\in L^2(\Omega)$ or in $\mathbb{H}^{1-s}(\Omega)$ when the extra 
regularity is useful or needed for existence and uniqueness. 
\end{remark}
\begin{remark}
We will further suppose that $f$ is compatible with the Dirichlet boundary condition c.f . \cite[Remark 2.8]
{nochetto2015pde}. In particular, we will suppose that in the regime $s\in (0,\frac{1}{2})$, the data vanishes 
sufficiently fast near $\partial \Omega$, in the regime $s\in (\frac{1}{2},1)$,  $f\in \mathbb{H}^{1-s}(\Omega)$ is 
sufficient. The case $s=\frac{1}{2}$ is the non-weighted standard harmonic extension.
\end{remark}
\par
To facilitate the solution of \eqref{fracLaplace.local} we need additional notation and properties of weighted 
Sobolev spaces, explored in great detail in  \cite{kufner1985weighted}.
For $x\in \mathbb{R}^d$ and $y\in\mathbb{R}_{+}$, we write $\x=(x,y)$ and  let $d\x=dx\,dy$, be the standard 
tensor product Lebesgue measure on $\mathbb{R}^{d+1}$. For $\omega\subset \mathbb{R}^{d}\times 
\mathbb{R}_{+}$, an open set and $a:=1-2s\in(-1,1)$, we define $L^2(\omega,y^a)$ to be all measurable 
functions $u$ on $\omega$ such that
\[
   \TwoNorm{u}{\omega}^2=\int_{\omega} u^2\, y^a \,d\x < \infty,
\]
and define $\Hspace{\omega}$ similarly,  by all measurable functions $u$ on $\omega$ such that
\[
   \HNorm{u}{\omega}:=\left(\TwoNorm{u}{\omega}^2+\TwoNorm{\nabla u}{\omega}^2\right)^{\frac{1}{2}}< \infty.
\]
Finally, we define the space incorporating the homogeneous Dirichlet boundary condition 
on the outer cylinder as 
\[
   \Hdir{\cil}=\{u\in\Hspace{\cil}\, : \, u=0 \text{ on } \partial_L \cil\}.
\]
Integrating \eqref{fracLaplace.local} by parts we obtain the following weak form:  find $U\in \Hdir{\cil}$ such 
that
\begin{align}\label{varform}
B(U,\psi)=F(\psi)\quad  \text{for all }  \psi\in \Hdir{\cil},
\end{align}
where the bilinear and linear forms read
\[
B(U,\psi) :=\int_{\cil} B(x) \nabla U\nabla \psi \,  y^a \,d\x
\qquad\textrm{and}\qquad
F(\psi):=\int_{\dmn   }c_s f(x) \psi(x,0)\,dx.
\]
As the above problem is in an infinite domain, we introduce a truncated cylinder solution 
for computations, which is extend by zero to the infinite domain.
We denote the truncated domain $\cilt=\dmn \times(0,T)$, and 
$\partial_{L}\cilt=\left(\partial \dmn \times[0,T]\right)\cup \left( \Omega\times \{T\}\right)$, for 
some $T>0$. We have the related truncated space given by 
\[
\Hdir{\cilt}=\{u\in\Hspace{\cilt}\, : \, u=0 \text{ on } \partial_L \cilt\}.
\]
We then solve for
$U_T\in \Hdir{\cilt}$ such that
\begin{align}\label{varform.trunc}
B_T(U_T,\psi)=F(\psi)\quad  \text{for all }  \psi\in \Hdir{\cilt},
\end{align}
where we introduce the natural notation for the truncated bilinear form
\[
B_T(U_T,\psi) :=\int_{\cilt}B(x) \nabla U_T\nabla \psi  \, y^a \,d\x.
\]
Extending $U_T$ by zero into $\cil$ we may obtain an infinite domain approximation which we do not relabel.
The following exponential error estimate was proven in \cite[Lemma 3.3]{nochetto2015pde}, which we restate 
here for completeness.
\begin{theorem}
Let $T\geq1$ and let $U$ be a solution to \eqref{varform}, 
and $U_T$ satisfy \eqref{varform.trunc}, for $f\in H^{-s}(\Omega)$,
then we have
\begin{align*}
  \TwoNorm{\nabla( U-U_T)}{\cil}\lesssim e^{-C T}\norm{f}_{H^{-s}(\Omega)},
\end{align*}
for $C>0$ independent of $T$.
\end{theorem}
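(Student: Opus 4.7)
The plan is to combine Galerkin orthogonality for the truncated problem with the exponential decay of the one-dimensional profiles $\psi_k$ in the extended direction.

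First, I would note that since $U_T$ vanishes on $\dmn\times\{T\}$ together with the outer lateral boundary, extending $U_T$ by zero to $\cil$ yields a function in $\Hdir{\cil}$ supported in $\overline{\cilt}$. Comparing \eqref{varform} and \eqref{varform.trunc} shows that this extension satisfies $B(U_T,\psi)=F(\psi)$ for every $\psi$ in the closed subspace
$$V_T:=\{\psi\in\Hdir{\cil}:\psi\equiv 0\text{ on }\dmn\times[T,\infty)\},$$
so $U_T$ is the $B$-Galerkin approximation of $U$ from $V_T$. The ellipticity bounds on $A$ (and hence on $B$) give a Céa-type estimate
$$\TwoNorm{\nabla(U-U_T)}{\cil}\lesssim\inf_{V\in V_T}\TwoNorm{\nabla(U-V)}{\cil}.$$

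Second, I would exhibit a concrete competitor by cutting $U$ off in $y$. Let $\eta\in C^{\infty}([0,\infty))$ satisfy $\eta\equiv 1$ on $[0,T/2]$, $\eta\equiv 0$ on $[T,\infty)$, and $\abs{\eta'}\lesssim 1/T$, and set $V:=\eta U\in V_T$. Expanding $\nabla((1-\eta)U)$ via the product rule reduces the Céa bound to two tail integrals,
$$\TwoNorm{\nabla(U-V)}{\cil}^2\lesssim\int_{\dmn\times(T/2,\infty)}y^a\abs{\nabla U}^2\,d\x\,+\,T^{-2}\int_{\dmn\times(T/2,T)}y^a U^2\,d\x.$$

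Third, I would insert the spectral representation $U(x,y)=\sum_k u_k\phi_k(x)\psi_k(y)$, in which the coefficients satisfy $u_k=\mu_k^{-s}f_k$ where $f_k=(f,\phi_k)_{L^2(\dmn)}$. Using $L^2(\dmn)$-orthonormality of $\{\phi_k\}$ and the eigenvalue identity $\|\nabla\phi_k\|^2_{L^2(\dmn)}\simeq\mu_k$, both tails reduce to one-dimensional integrals of $\psi_k^2\,y^a$ and $(\psi_k')^2\,y^a$ over $(T/2,\infty)$. Substituting $z=\sqrt{\mu_k}\,y$ and invoking the large-argument asymptotic $K_s(z)\sim\sqrt{\pi/(2z)}\,e^{-z}$ (together with the matching formula for $K_s'$) produces the uniform tail bound
$$\mu_k\int_{T/2}^{\infty}y^a\psi_k(y)^2\,dy+\int_{T/2}^{\infty}y^a\psi_k'(y)^2\,dy\lesssim\mu_k^{s}\,e^{-\sqrt{\mu_k}\,T}.$$
Since the principal eigenvalue $\mu_1>0$ depends only on $\dmn,\alpha,\beta$, the factor $e^{-\sqrt{\mu_1}T/2}$ can be extracted uniformly in $k$ (using $T\geq 1$ to absorb the $T^{-2}$ prefactor), leaving $\sum_k u_k^2\mu_k^s=\sum_k f_k^2\mu_k^{-s}=\norm{f}^2_{H^{-s}(\dmn)}$ by the spectral characterization of the dual norm, which yields the claim with $C=\sqrt{\mu_1}/2$.

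The main obstacle is the third step: the asymptotic expansion of $K_s$ is only valid for large arguments, so the integration range $(T/2,\infty)$ must be split into a near-field $\sqrt{\mu_k}\,y\lesssim 1$ and a far-field $\sqrt{\mu_k}\,y\gtrsim 1$, with constants tracked carefully so that the emerging polynomial prefactor is precisely $\mu_k^{s}$, matching the $H^{-s}$-norm of $f$; a higher power would spuriously demand stronger regularity on the data.
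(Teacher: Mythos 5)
The paper does not prove this theorem itself but restates it from \cite[Lemma 3.3]{nochetto2015pde}; your argument reconstructs essentially that proof (Galerkin/best approximation in the subspace of functions vanishing for $y\ge T$, a cutoff competitor $\eta U$, and the exponential decay of the spectral profiles $\psi_k(y)=C_s(\sqrt{\mu_k}\,y)^sK_s(\sqrt{\mu_k}\,y)$) and is correct, with the exponent bookkeeping $\mu_k\int_{T/2}^\infty y^a\psi_k^2\,dy+\int_{T/2}^\infty y^a(\psi_k')^2\,dy\lesssim \mu_k^{s}e^{-\sqrt{\mu_k}T}$ matching the $\mathbb{H}^{-s}$ dual norm exactly as needed. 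The only point requiring care is the near-field/far-field splitting of the Bessel asymptotics that you already flag, where the constants absorb a dependence on $\mu_1$ and hence on the domain, which is consistent with the paper's convention for $\lesssim$.
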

Thus, the solution of the truncated problem will suffice for a sufficiently large $T$. 
In the remaining parts of this paper, we will merely consider the numerical approximation of $U_{T}$  extended 
by zero into $\cil$.
We will drop the truncation notation in the following sections, as well as the capital lettering $U$ for the solution to the weighted 
harmonic extension if there is no ambiguity.
\begin{remark}
For a full discussion on the regularity and approximation of the fine-grid problem we refer again to  
\cite[Section 2.6]{nochetto2015pde}. For our numerical homogenization method, we will not consider the 
fine-grid error and focus merely on the coarse-grid error.
\end{remark}

\section{Sobolev Spaces and Inequalities}\label{sobolevspaces}
In this section we will introduce the notation of fractional and weighted Sobolev spaces.   First, we recall
the results and notation  of fractional and  weighted Poincar\'{e} inequalities presented in 
\cite{nochetto2015pde} and references therein. We also present and prove some useful inverse and trace 
inequalities in the weighted Sobolev space, thus linking the two kinds of Sobolev spaces.  

\subsection{Fractional Sobolev Spaces}\label{fractionalspacesection}
Here we recall some details of fractional Sobolev spaces as they will be related to the trace spaces of the 
weighted spaces we will consider, as well as being the natural space for the solution $u$ to 
\eqref{fracLaplace}. There is a vast literature on this subject and for details we refer to \cite{di2012hitchhikers}.  
We loosely follow the presentation of  \cite{nochetto2015pde} in the following. 
We begin by introducing the Gagliardo-Slobodeckij seminorm for $s\in(0,1)$:
\begin{align*}
  |u|^2_{H^s(\Omega)}=\int_{\Omega}\int_{\Omega} \frac{|u(x)-u(x')|^2}{|x-x'|^{d+2s}}\,dx\,dx',
\end{align*}
and the related norm
$\norm{u}^2_{H^s(\Omega)}= \norm{u}^2_{L^2(\Omega)}+|u|^2_{H^s(\Omega)}$. 
We define the Sobolev space $H^s(\Omega)$ to be the measurable  
functions such that  $\norm{u}_{H^s(\Omega)}< \infty$. 
For detailed construction we refer to \cite{tartar2007introduction}. 
We define the space $H^s_0(\Omega)$ to be the closure of $C_{0}^\infty(\Omega)$ with respect to the 
norm $\norm{\cdot}_{H^s(\Omega)}$.
\par
If the boundary of $\Omega$ is smooth enough, an interpolation space interpretation is possible 
\cite{lions2014non}. We may write the Sobolev space with $s\in [0,1]$ and $\theta=1-s,$ as the interpolation 
space pair
\begin{align*}
H^s(\Omega)= \left [H^1(\Omega),L^2(\Omega) \right]_{\theta} 
\quad\text{and}\quad	
H^s_0(\Omega)= \left [H^1_0(\Omega),L^2(\Omega) \right]_{\theta}, 
\quad
\theta\neq \frac{1}{2}.
\end{align*}
For the critical case $s=\frac{1}{2}$, this is the so called Lions-Magenes space
\begin{align*}
      H^{\frac{1}{2}}_{00}(\Omega)= \left [H^1_0(\Omega),L^2(\Omega) \right]_{\frac{1}{2}}, 
\end{align*}
this space satisfies 
\begin{align*}
  H^\frac{1}{2}_{00}(\Omega)
  =\Bigg\{u\in H^\frac{1}{2}(\Omega): \int_{\Omega}\frac{u^2(x)}{\text{dist}(x',\partial \Omega )} \,dx' < \infty \Bigg\}.
\end{align*}    
We summarize this in a general notation as
\begin{align*}
\mathbb{H}^s(\Omega)=
  \begin{cases}
  H^s(\Omega), &\text{ for } s\in (0,\frac{1}{2}),\\
  H^{1/2}_{00}(\Omega), &\text{ for } s= \frac{1}{2},\\
  H^s_{0}(\Omega), &\text{ for } s\in (\frac{1}{2},1).
  \end{cases}
\end{align*}

\subsection{Weighted Sobolev Spaces and Inequalities}
We now give the background for weighted Sobolev spaces as well as present some critical inequalities. 
A key property of the weight $y^a$ is that it belongs the Muckenhoupt class $A_2(\mathbb{R}^{d+1})$ 
\cite{gol2009weighted,muckenhoupt1972weighted}. For a general weight,
$w\in L_{loc}^1(\mathbb{R}^{d+1})$, we say that $w\in A_{2}(\mathbb{R}^{d+1})$ 
if there exists a $C_{2,w}>0$ such that  
\begin{align}\label{Muckenhouptconstant}
   \sup_{B} \left(\frac{1}{|B|}\int_{B} w \, d\x  \right)\left(\frac{1}{|B|}\int_{B} w^{-1} \, d\x \right) =C_{2,w}<\infty,
\end{align}
for all balls $B\subset\mathbb{R}^{d+1}.$ We will denote the Muckenhoupt weight constant for $y^a$ as 
$C_{2,a}$. We will now give a few of the critical inequalities and properties related to this class of  weighted 
Sobolev spaces.
\par
A key inequality for the analysis is the weighted Poincar\'{e} inequality.
The weighted Poincar\'{e} inequality for Muckenhoupt weights is well studied in nonlinear potential theory of 
degenerate problems \cite{fabes1982local,heinonen2012nonlinear} and references therein. We will state  the 
result here without proof.
\begin{lemma}[Weighted Poincar\'{e} Inequality]\label{poincare2}
Let $\omega\subset\Omega \times (0,\infty)$, be a bounded, 
star-shaped domain (with respect to the ball B) and $\text{diam}(\omega)\approx H$.
If $w\in H^1(\omega, y^a)$ it holds that 
\begin{align}\label{poincare_unit2}
	\norm{w-\avrg{w}{\omega}}_{L^2(\omega,y^a)  }\lesssim H \norm{\nabla w}_{L^2(\omega,y^a ) },
\end{align}
where the constants are independent of $H$ and 
$\avrg{w}{\omega}=\frac{1}{|\omega|}\int_{\omega} w\,d\x$. \qed
\end{lemma}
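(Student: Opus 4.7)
The plan is to establish the inequality on a reference domain of unit diameter and then to obtain the $H$-dependence by scaling. Both steps crucially exploit that $y^a$ lies in the Muckenhoupt class $A_2(\mathbb{R}^{d+1})$, so that classical potential-theoretic tools for Poincar\'{e} inequalities transfer to the weighted setting; the implicit constants will depend only on $d$, $a$, the Muckenhoupt constant $C_{2,a}$, and the chunkiness of $\omega$ (i.e. the ratio $\text{diam}(\omega)/\text{rad}(B)$).

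For the unit-scale step, since $\omega$ is star-shaped with respect to $B$, I would invoke a Sobolev-type integral representation as in \cite{heinonen2012nonlinear} to write, pointwise on $\omega$,
\begin{align*}
|w(\x)-\avrg{w}{B}| \lesssim \int_{\omega}\frac{|\nabla w(\z)|}{|\x-\z|^{d}}\,d\z.
\end{align*}
This identifies $w-\avrg{w}{B}$ with a Riesz potential of order one applied to $|\nabla w|$. Because $y^a\in A_2$, the Muckenhoupt--Wheeden weighted norm inequality for such potentials (used in exactly this setting in \cite{fabes1982local}) gives
\begin{align*}
\norm{w-\avrg{w}{B}}_{L^2(\omega,y^a)} \lesssim \norm{\nabla w}_{L^2(\omega,y^a)}
\end{align*}
on the unit-scale domain. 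The constant $\avrg{w}{B}$ on the left can then be replaced by $\avrg{w}{\omega}$ via the triangle inequality together with weighted Jensen: the constant shift $\avrg{w}{B}-\avrg{w}{\omega}$ is absorbed using the $L^2(\omega,y^a)$-minimality of the weighted mean, producing only an extra multiplicative factor depending on $C_{2,a}$ and the chunkiness.

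The final step is the dilation $\x=H\tilde\x$. A direct change-of-variables computation gives $y^a\,d\x=H^{d+1+a}\,\tilde y^a\,d\tilde\x$ and $|\nabla_\x w|^2 y^a\,d\x=H^{d-1+a}|\nabla_{\tilde\x}\tilde w|^2 \tilde y^a\,d\tilde\x$, so the ratio of the two squared weighted norms picks up exactly a factor $H^2$. The $A_2$ constant of $y^a$ is invariant under the anisotropic dilation $\x\mapsto H\x$, so the unit-scale bound transfers intact and delivers \eqref{poincare_unit2}.

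The hard part, in my view, is ensuring uniformity of all constants with respect to the shape of $\omega$ and its position relative to the singular/degenerate locus $\{y=0\}$. The Sobolev representation depends on the chunkiness, and the weighted Riesz-potential bound depends on $C_{2,a}$; both are treated as fixed structural data in the statement and are therefore swept into the hidden constant, leaving only the diameter scale $H$ explicit as claimed.
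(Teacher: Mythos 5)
Your sketch is correct and is exactly the argument the paper relies on: the paper states this lemma without proof, citing \cite{fabes1982local,heinonen2012nonlinear}, and your route via the star-shaped integral representation, the $A_2$-weighted Riesz-potential bound, and dilation is precisely the Fabes--Kenig--Serapioni proof from those references. The only cosmetic nitpicks are that the dilation $\x\mapsto H\x$ is isotropic (it rescales the weight $y^a$ by the constant factor $H^a$, which is why the $A_2$ constant survives), and that replacing $\avrg{w}{B}$ by the unweighted mean $\avrg{w}{\omega}$ costs a factor controlled by $C_{2,a}$ and the chunkiness exactly as you indicate.
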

\begin{remark}
Note that the above inequality may be extended to a connected union of star-shaped domains where the 
average can be taken over a subdomain \cite[Corollary 4.4]{nochetto2015pde}.  We will refer to both of these 
results simply as the Weighted Poincar\'{e} Inequality when there is no ambiguity. 
\end{remark}
\par
We have the following $L^\infty \to L^2$ weighted inverse inequality. For this we suppose that we have a 
coarse quasi-uniform, shape-regular, discretization ${\cal T}_{\cilt}$ of the  domain $\cilt$ with characteristic 
mesh size $H$. Similarly, we denote the restricted mesh onto the lower dimensional space $\Omega$, to be  
${\cal T}_{\Omega}$. We denote by $\mathbb{P}_1(T)$ the linear polynomials on $T\in {\cal T}_{\cilt}$.
\begin{proposition}\label{infitytoL2}
For $p\in \mathbb{P}_1(T)$, we have
\begin{align}\label{inftytoL2}
	\norm{p}_{L^\infty(T ) }&\lesssim  |T|^{-1}\norm{y^{-\frac{a}{2}}}_{L^2(T) }  \norm{p}_{L^2(T,y^a ) }.
\end{align}
\end{proposition}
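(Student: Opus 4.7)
The plan is to combine the standard unweighted polynomial inverse inequality on a simplex with a single application of Cauchy--Schwarz to introduce the weight. The key observation is that on a shape-regular simplex $T$ all norms on the finite-dimensional space $\mathbb{P}_1(T)$ are equivalent, and by a Piola-type scaling from a reference simplex $\hat T$ one gets the standard $L^\infty$-to-$L^1$ inverse estimate
\[
\|p\|_{L^\infty(T)}\;\lesssim\;|T|^{-1}\,\|p\|_{L^1(T)},
\]
uniformly in $T \in \mathcal{T}_{\cilt}$ with a constant depending only on the shape-regularity and on $d$.

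First I would invoke this unweighted inverse inequality to reduce the problem to bounding $\|p\|_{L^1(T)}$. Next I would write the trivial factorization $|p| = \bigl(|p|\,y^{a/2}\bigr)\cdot y^{-a/2}$ pointwise on $T$ and apply Cauchy--Schwarz:
\[
\|p\|_{L^1(T)} \;=\; \int_T |p|\,y^{a/2}\,y^{-a/2}\,d\x
\;\leq\; \left(\int_T p^2\,y^a\,d\x\right)^{1/2}\!\!\left(\int_T y^{-a}\,d\x\right)^{1/2}
\;=\; \|p\|_{L^2(T,y^a)}\,\|y^{-a/2}\|_{L^2(T)}.
\]
Chaining these two estimates gives exactly \eqref{inftytoL2}.

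The main (and essentially only) subtlety is ensuring that the auxiliary factor $\|y^{-a/2}\|_{L^2(T)}$ is itself finite when $T$ abuts the critical boundary $y=0$. Since $a=1-2s \in (-1,1)$ one has $-a \in (-1,1)$, and on any element the integral $\int_T y^{-a}\,d\x$ is finite; this is the analogue of the $A_2$-Muckenhoupt integrability property already exploited elsewhere in the paper. No further regularity of $A$, $B$ or of the mesh beyond quasi-uniformity and shape-regularity is needed, and the argument is essentially two lines once the reference-element scaling is recorded. Because the proof never exits the finite-dimensional space $\mathbb{P}_1(T)$, I do not anticipate any genuine obstacle.
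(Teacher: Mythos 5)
Your proof is correct and follows essentially the same route as the paper: the unweighted $L^\infty$-to-$L^1$ inverse estimate on $T$ followed by the pointwise factorization $|p| = (|p|\,y^{a/2})\,y^{-a/2}$ and Cauchy--Schwarz. Your added remark on the finiteness of $\|y^{-a/2}\|_{L^2(T)}$ for $-a\in(-1,1)$ is a useful sanity check that the paper leaves implicit.
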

\begin{proof}
We begin by 
utilizing the following result from classical FE  inverse inequalities
\[
  \norm{p}_{L^r(T) }\lesssim  |T|^{\left(\frac{1}{r}-\frac{1}{q}\right)}\norm{p}_{L^q (T ) }
  \quad\text{for } 1\leq q\leq r<\infty.
\]
For $r=\infty,q=1$, we obtain 
\begin{align*}
\norm{p}_{L^\infty(T ) }&\lesssim  |T|^{-1}\norm{p}_{L^1 (T ) }=|T|^{-1}\int_{T}|py^{a/2}|y^{-a/2}\,d\x
\leq |T|^{-1} \norm{p}_{L^2(T,y^a ) }  \norm{y^{-\frac{a}{2}}}_{L^2(T) } . \qedhere
\end{align*}
\end{proof}
\par
Let $\tr{}{\cdot}$ denote the canonical trace operator for the space $\Hdir{\cil}$, and trivially also the 
zero-extension truncated space $\Hdir{\cilt}$. We state the following trace lemma.
\begin{lemma}\label{CanonicalTrace}
For $u\in \Hdir{\cilt}$, we have 
$\tr{}{u}\in \mathbb{H}^{s}(\Omega)$, and
\begin{align}\label{CanonicalTraceestimate}
    \norm{u}_{\mathbb{H}^s(\Omega)}\lesssim \TwoNorm{  u}{ \cilt }+\TwoNorm{  \nabla u}{ \cilt }.
\end{align}
Thus, $\Hdir{\cilt}\subset \mathbb{H}^s(\Omega)$.
\end{lemma}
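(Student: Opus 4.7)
The plan is to reduce \eqref{CanonicalTraceestimate} to a one-dimensional weighted trace inequality via the spectral decomposition \eqref{spectral}. Let $\{(\mu_k,\phi_k)\}_{k\in\mathbb{N}}$ be the Dirichlet eigenpairs of $\mathcal{L}_A$ on $\Omega$. For $u\in\Hdir{\cilt}$ I would write $u(x,y)=\sum_{k} u_k(y)\phi_k(x)$ with $u_k(y)=\int_\Omega u(x,y)\phi_k(x)\,dx$; the homogeneous Dirichlet data on $\partial_L\cilt$ implies $u(\cdot,y)\in H^1_0(\Omega)$ for a.e.\ $y$ and $u_k(T)=0$ after the zero extension. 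Orthonormality of $\{\phi_k\}$, the spectral identity $\int_\Omega A\nabla_x u\cdot\nabla_x u\,dx=\sum_k \mu_k|u_k(y)|^2$, and uniform ellipticity of $A$ give
\[
\TwoNorm{u}{\cilt}^2+\TwoNorm{\nabla u}{\cilt}^2 \;\simeq\; \sum_{k}\int_0^T y^a\bigl(|u_k'(y)|^2+(1+\mu_k)|u_k(y)|^2\bigr)\,dy,
\]
while $\tr{}{u}(x)=\sum_k u_k(0)\phi_k(x)$ has $\mathbb{H}^s(\Omega)$-norm equivalent to $\sum_k \mu_k^s|u_k(0)|^2$ by the standard spectral characterization, valid across all three regimes of $s$ with the Lions--Magenes space accounting for $s=\tfrac12$.

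With these two equivalences in hand, the claim reduces to proving, for each $\mu>0$ and every $v\in H^1((0,T),y^a)$ with $v(T)=0$, the one-dimensional bound
\[
\mu^s|v(0)|^2 \;\lesssim\; \int_0^T y^a\bigl(|v'(y)|^2+\mu|v(y)|^2\bigr)\,dy
\]
with constant independent of $\mu$ and $T$. The dilation $z=\sqrt{\mu}\,y$, $w(z)=v(z/\sqrt{\mu})$ absorbs both the weight $y^a$ and the factor $\mu$, and after extending $w$ by zero beyond $\sqrt{\mu}T$ it reduces the estimate to the scale-free half-line inequality $|w(0)|^2 \lesssim \int_0^{\infty} z^a(|w'(z)|^2+|w(z)|^2)\,dz$.

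The main obstacle is this half-line weighted trace bound; it is the classical trace inequality for the Muckenhoupt weight $z^a$ with $a\in(-1,1)$, and I would prove it by density on smooth compactly supported $w$, writing $|w(0)|^2=-\int_0^\infty (w^2)'\,dz$, inserting a weighted cut-off adapted to $z^a$, and applying Cauchy--Schwarz; the Muckenhoupt range $a\in(-1,1)$ enters precisely through the integrability of $z^{\pm a/2}$ needed for the weighted Hardy-type estimate. Summing the resulting one-dimensional inequality over $k$ and combining with the two spectral equivalences of the first paragraph yields \eqref{CanonicalTraceestimate}. The delicate point that must be verified carefully is the spectral equivalence at the critical value $s=\tfrac12$ involving the Lions--Magenes norm, which however follows from standard real interpolation between $H^1_0(\Omega)$ and $L^2(\Omega)$.
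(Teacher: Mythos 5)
Your argument is essentially correct, but note that the paper does not actually prove this lemma: its ``proof'' consists of citations (Cabr\'e--Tan for $s=\tfrac12$, Lions for $s\in(0,1)\setminus\{\tfrac12\}$, and Nekvinda for traces of weighted spaces). What you propose is therefore a genuinely different, self-contained route, and the steps check out. The two spectral equivalences are sound: orthonormality of $\{\phi_k\}$, the identity $\int_\Omega A\nabla_x u\cdot\nabla_x u\,dx=\sum_k\mu_k|u_k(y)|^2$ with uniform ellipticity, and Tonelli give the extension-norm equivalence, while $\norm{v}_{\mathbb{H}^s(\Omega)}^2\simeq\sum_k\mu_k^s|v_k|^2$ follows from the interpolation identities the paper records in Section 3.1 together with the standard spectral description of $[H^1_0(\Omega),L^2(\Omega)]_{1-s}$ for the self-adjoint operator ${\cal L}_A$ (this is indeed where the Lipschitz boundary and the Lions--Magenes space at $s=\tfrac12$ enter, as you flag). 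Your dilation bookkeeping is right: with $a=1-2s$ one finds
\begin{align*}
\int_0^{\sqrt{\mu}T} z^a\bigl(|w'(z)|^2+|w(z)|^2\bigr)\,dz=\mu^{-s}\int_0^T y^a\bigl(|v'(y)|^2+\mu|v(y)|^2\bigr)\,dy,
\end{align*}
so the scale-free half-line bound produces exactly the factor $\mu^{s}$, and the zero extension past $\sqrt{\mu}\,T$ is legitimate because $u$ vanishes on $\Omega\times\{T\}$. For the half-line trace bound, writing $w(0)=-\int_0^1(\chi w)'\,dz$ with a cut-off $\chi$ and applying Cauchy--Schwarz with the splitting $|w'|z^{a/2}\cdot z^{-a/2}$ needs only $\int_0^1 z^{-a}\,dz<\infty$, i.e.\ $a<1$ (equivalently $s>0$), while $a>-1$ guarantees that the slices $u(\cdot,y)$ and coefficients $u_k(y)$ are well defined; point evaluation at $0$ makes sense since $a<1$ puts $w$ in $W^{1,1}_{loc}([0,\infty))$. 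Comparing the two: the paper's citation is shorter and covers more general settings, whereas your proof makes explicit the dependence of the constant on $s$ (through $\int_0^1 z^{-a}\,dz=(1-a)^{-1}$ and the interpolation constants) and shows why the estimate is uniform in $T$ after zero extension. The one step you should spell out if you write this up in full is the interchange of trace and series, i.e.\ $\tr{}{u}=\sum_k u_k(0)\phi_k$, which follows by density and the continuity you are in the process of proving applied to partial sums.
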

\begin{proof}
See \cite{cabre2010positive}, for $s = \frac{1}{2}$, and \cite{lions1959theoremes}, 
for $s\in (0,1) \backslash\{\frac{1}{2}\}$. 
For a general discussion on trace spaces of weighted spaces we refer the reader to 
\cite{nekvinda1993characterization}.
\end{proof}
\begin{remark}
Note that $L^1(\Omega)$ is the canonical  trace space for $W^{1,1}(\cilt)$ \cite{adams2003sobolev}, 
and  by a trivial argument
\begin{align}\label{embedding}
  \norm{ u}_{L^1(\cilt ) }  
  = \norm{ u y^{\frac{a}{2}}y^{-\frac{a}{2}}  }_{L^1(\cilt ) }
  \lesssim C_{-a}(\cilt) \TwoNorm{u}{\cilt},
\end{align}
where $C^2_{-a}(\cilt)=\int_{\cilt} y^{-a}\,d\x$ which is finite on a bounded domain.  Similarly, the result holds for 
$\nabla u$. Thus, we have the embeddings $\Hdir{\cilt}\subset W^{1,1}(\cilt) \subset L^1(\Omega).$ This $L^1$ 
embedding structure suggests the use of quasi-interpolation operators of the Scott-Zhang \cite{scott1990finite} type 
which is discussed in Section \ref{quasi_int_section}.
\end{remark}
\par
We have the following  trace inequalities for elements 
$T\in {\cal T}_{\cilt}$, and faces (edges) $F\in {\cal T}_{\Omega}$. 
\begin{lemma}\label{traceHlemma.L1}
Let $T\in {\cal T}_{\cilt}$ and $F=\partial T\cap \Omega$ be the face (edge) adjacent to $\Omega$. 
Then, for $u\in W^{1,1}(T)$, we have the following inequality 
\begin{align}\label{traceboundH.L1}
	\norm{ u}_{L^1 (F) }   \lesssim |F||T|^{-1}\left( \norm{  u}_{L^1(T)} +   H \norm{  \nabla u}_{L^1(T)}\right).
\end{align}
\end{lemma}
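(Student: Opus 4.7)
The plan is to reduce this to a reference-element trace inequality and then scale back. On a fixed reference simplex $\hat{T}\subset \mathbb{R}^{d+1}$ with a distinguished face $\hat{F}\subset \partial \hat{T}$, the standard $W^{1,1}$ trace theorem (which follows from the divergence theorem applied to a suitable vector field whose normal component equals $1$ on $\hat{F}$, or alternatively from the one-dimensional fundamental theorem of calculus along lines transversal to $\hat{F}$) gives the bound
\begin{equation*}
\norm{\hat u}_{L^1(\hat F)} \lesssim \norm{\hat u}_{L^1(\hat T)} + \norm{\hat\nabla \hat u}_{L^1(\hat T)}
\qquad \text{for all } \hat u \in W^{1,1}(\hat T),
\end{equation*}
with a constant depending only on $\hat T$.

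Next I would use the affine map $F_T\colon \hat T \to T$, $F_T(\hat \x)=B_T\hat\x+\x_0$, and set $\hat u := u\circ F_T$. Shape-regularity implies $|\det B_T|\approx |T|$, $\|B_T\|\approx H$, and the induced map on the face satisfies $|\det B_T^F|\approx |F|$. The change-of-variable formulas then yield
\begin{equation*}
\norm{\hat u}_{L^1(\hat T)} \approx |T|^{-1}\norm{u}_{L^1(T)}, \qquad
\norm{\hat\nabla \hat u}_{L^1(\hat T)} \approx H\,|T|^{-1}\norm{\nabla u}_{L^1(T)},
\end{equation*}
using $\hat\nabla \hat u = B_T^\top\nabla u$ so that $|\hat\nabla\hat u|\approx H|\nabla u|$ pointwise. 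Likewise $\norm{u}_{L^1(F)}\approx |F|\,\norm{\hat u}_{L^1(\hat F)}$.

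Plugging these scalings into the reference inequality and multiplying through by $|F|$ produces exactly
\begin{equation*}
\norm{u}_{L^1(F)} \lesssim |F|\,|T|^{-1}\bigl(\norm{u}_{L^1(T)} + H\norm{\nabla u}_{L^1(T)}\bigr),
\end{equation*}
with the hidden constant depending only on the shape-regularity of ${\cal T}_{\cilt}$ and the reference element.

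There is no essential obstacle: the only mild subtlety is that the statement is in $L^1$ rather than the more common $L^2$, so one must invoke the $W^{1,1}$ trace inequality on the reference element (not the Hilbertian one). Apart from that, the argument is a direct scaling calculation, and it is worth noting that no weight $y^a$ enters the statement, so the Muckenhoupt machinery is not required here.
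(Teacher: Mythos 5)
Your argument is correct and is exactly the route the paper takes: the paper's proof simply cites "the trace inequality and scaling arguments," and your reference-element $W^{1,1}$ trace bound combined with the affine change of variables ($|\det B_T|\approx|T|$, $\|B_T\|\approx H$, face Jacobian $\approx|F|$) fills in precisely those details. The scalings all check out and yield the stated constant $|F|\,|T|^{-1}$.
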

\begin{proof}
This is an application of the trace inequality and  
scaling arguments c.f. \cite[Section 2.4]{melenkApel}. 
\end{proof}
We also have the following weighted trace inequality.
\begin{lemma}\label{traceHlemma}
Let $T\in {\cal T}_{\cilt}$, $F=\partial T\cap \Omega$ be the face (edge) adjacent to $\Omega$,
and $u\in\Hdir{T}$.
Then we have the following inequality 
\begin{align}\label{traceboundH}
  \norm{ u}_{L^2 (F) }   
      \lesssim H^{s-1} \TwoNorm{u}{T} +H^{s}  \TwoNorm{\nabla u}{T}.
\end{align}
\end{lemma}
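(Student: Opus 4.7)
The plan is to reduce to a reference-element inequality and then scale. Concretely, I first aim to prove that for the reference simplex $\hat T$ with face $\hat F = \partial\hat T \cap \{y=0\}$ and any $\hat u\in H^1(\hat T,y^a)$,
\begin{equation*}
\norm{\hat u}_{L^2(\hat F)}^2 \lesssim \norm{\hat u}_{L^2(\hat T,y^a)}^2 + \norm{\hat\nabla\hat u}_{L^2(\hat T,y^a)}^2,
\end{equation*}
and then push this forward to $T$ by the affine map $\mathbf{x}=H\hat{\mathbf{x}}$. The scaling is straightforward: with $dx=H^d\,d\hat x$ on $F$, $d\mathbf{x}=H^{d+1}\,d\hat{\mathbf{x}}$ on $T$, $y^a=H^a\hat y^a$, and $|\nabla u|^2=H^{-2}|\hat\nabla\hat u|^2$, the reference estimate yields
\begin{equation*}
\norm{u}_{L^2(F)}^2 \lesssim H^{-(1+a)}\norm{u}_{L^2(T,y^a)}^2 + H^{1-a}\norm{\nabla u}_{L^2(T,y^a)}^2,
\end{equation*}
and with $a=1-2s$ these exponents become $2(s-1)$ and $2s$, giving the claim after taking square roots.

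To establish the reference estimate I would argue by the fundamental theorem of calculus in the $y$-direction. By shape regularity of $\hat T$, there is a constant $c\gtrsim 1$ such that for every $\hat x$ in a subset of $\hat F$ of positive measure (in fact, all of $\hat F$ up to a set of measure zero) the vertical segment $\{(\hat x,t):0\le t\le c\}$ lies in $\hat T$. For such $\hat x$ and $\hat y\in(0,c)$ write
\begin{equation*}
\hat u(\hat x,0) = \hat u(\hat x,\hat y) - \int_0^{\hat y}\partial_t\hat u(\hat x,t)\,dt,
\end{equation*}
square, and use Cauchy--Schwarz with the weight split $\partial_t\hat u = (\partial_t\hat u)\,t^{a/2}\cdot t^{-a/2}$ to estimate
\begin{equation*}
\left(\int_0^{\hat y}\partial_t\hat u\,dt\right)^2 \le \frac{\hat y^{1-a}}{1-a}\int_0^{\hat y}(\partial_t\hat u)^2 t^a\,dt,
\end{equation*}
which is valid precisely because $a<1$. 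Multiplying the squared identity by $\hat y^a$ and integrating $\hat y$ over $(0,c)$ uses the two weighted moments $\int_0^c\hat y^a\,d\hat y$ and $\int_0^c\hat y^{1-a}\hat y^a\,d\hat y=\int_0^c\hat y\,d\hat y$, both finite because $a>-1$. Dividing by $\int_0^c\hat y^a\,d\hat y\sim 1$ and then integrating in $\hat x$ over $\hat F$ delivers the reference estimate.

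The main technical point is the boundary behaviour of the Muckenhoupt weight at $y=0$: $y^a$ degenerates (or is singular) exactly on the face $F$ where the trace is taken, so one cannot invoke the unweighted Apel-type trace inequality of Lemma \ref{traceHlemma.L1} directly. The hypothesis $a\in(-1,1)$ from Section \ref{prelim} is what saves both parts of the argument: $a>-1$ makes the integral of the weight $y^a$ against a bounded function finite, and $a<1$ makes the reciprocal integral $\int t^{-a}\,dt$ finite, which is needed for the Cauchy--Schwarz step. A minor geometric nuisance is verifying that the vertical fibres through $\hat F$ remain in $\hat T$ on length $\gtrsim 1$; for a shape-regular simplex with a face on $\{y=0\}$ this is immediate, and for more general admissible element shapes one can either triangulate $\hat T$ into such pieces or invoke a standard extension to a reference prism before specialising.
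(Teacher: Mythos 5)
Your overall strategy is the same as the paper's: map to a reference element, prove (or invoke) a weighted trace inequality there, and scale back, with $a=1-2s$ converting the powers $H^{-(1+a)/2}$ and $H^{(1-a)/2}$ into $H^{s-1}$ and $H^{s}$. Your scaling bookkeeping is correct and matches the paper's. The difference is that the paper simply applies the weighted trace bound \eqref{CanonicalTraceestimate} (Lemma \ref{CanonicalTrace}, cited from the literature) on the reference element, together with the weight-scaling $(A_T(\hat y))^a \gtrsim H^a \hat y^a$, whereas you attempt a self-contained proof of the reference inequality via the fundamental theorem of calculus in $y$ with the weight split $t^{a/2}\cdot t^{-a/2}$. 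That splitting, and the observation that it needs exactly $a\in(-1,1)$, is the right elementary mechanism and is essentially how the cited trace theorems are proved on a half-space.

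However, the geometric claim underpinning your reference-element argument is false as stated: for a simplex $\hat T$ with face $\hat F\subset\{y=0\}$, it is \emph{not} true that the vertical segment of some fixed length $c\gtrsim 1$ above almost every $\hat x\in\hat F$ lies in $\hat T$. The available vertical fiber length degenerates to zero as $\hat x$ approaches $\partial\hat F$ (e.g.\ for the unit triangle with vertices $(0,0)$, $(1,0)$, $(0,1)$, the fiber above $\hat x=1-\eps$ has length $\eps$), so calling this ``immediate'' is wrong and the estimate as written does not close. The standard repair is to integrate along \emph{slanted} fibers from $(\hat x,0)$ toward the vertex of $\hat T$ opposite $\hat F$, restricted to, say, the first half of each segment: along such a fiber the $y$-coordinate is comparable to the arc-length parameter (by shape regularity the apex has height $\gtrsim 1$), so the same $t^{a/2}\cdot t^{-a/2}$ splitting applies verbatim, the directional derivative is bounded by $|\hat\nabla\hat u|$, and the change of variables $(\hat x,\tau)\mapsto(1-\tau)(\hat x,0)+\tau\hat P$ has Jacobian bounded above and below on $\tau\in(0,\tfrac12)$. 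Your alternative fixes are weaker: ``triangulate $\hat T$ into such pieces'' does not produce uniform vertical fibers, and ``extension to a reference prism'' silently requires a bounded extension operator for $H^1(\cdot,y^a)$ with an $A_2$ weight, which is a nontrivial citation in its own right. With the slanted-fiber correction (or by simply citing the weighted trace theorem on $\hat T$ as the paper does), your proof is complete.
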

\begin{proof}
We proceed by using mapping arguments similar to \cite[Lemma 7.2]{ErnGuermond2015} 
and weighted-scaling arguments from \cite{d2012finite,d2008coupling}. 
We prove the result for a  simplex  $T\in {\cal T}_{\cilt}$, 
such that $F:=\partial T\cap \Omega$ is a face or edge (not a vertex only).
We denote the reference (unit size) element $\hat{T}$ 
and similarly the reference boundary face $\hat{F}$. 
We let $A_{T}:\hat{T}\to T$ be an affine mapping, 
and denote $\hat{u}=u\circ A_{T}$, $\hat{\x}=A_{T}^{-1}(\x)$, for $\x\in T$, 
and $\text{diam}(T)\approx\text{diam}(F)\approx H$.
Note that from \cite[Lemma 3.2]{d2012finite} and from shape regularity we have that 
$(A_{T}(\hat{y}))^a\geq C H^a \hat{y}^a$, 
thus, 
\begin{align}\label{scalingK}
  \TwoNorm{u}{T}^2
    =\int_{T} u^2 y^a \,d\x
    =\frac{|T|}{|\hat{T}|}\int_{\hat{T}} \hat{u}^2(\hat{\x}) (A_{T}(\hat{y}))^a d\hat{\x} 
    \geq C H^a \frac{|T|}{|\hat{T}|} \norm{\hat{u}}^2_{L^2(\hat{T},\hat{y}^a )}.
\end{align}
By using standard trace inequality arguments,
the trace bound \eqref{CanonicalTraceestimate}, 
and the above scaling \eqref{scalingK}, 
in the weighted norm we obtain 
\begin{align*}
\norm{{u}}_{L^2(F)}
&=\left( \frac{|F|}{|\hat{F}|} \right)^{\frac{1}{2} }  \norm{{\hat{u}}}_{L^2(\hat{F})}
  \lesssim |F| ^{\frac{1}{2} }  \left(  {\left\lVert\hat{u}\right\rVert}_{L^2\left(\hat{T},\hat{y}^a\right)}
  +{\left\lVert\nabla\hat{u}\right\rVert}_{L^2\left(\hat{T},\hat{y}^a\right)} \right)\\
&\lesssim |F| ^{\frac{1}{2} } |T|^{-\frac{1}{2}}H^{-\frac{a}{2}} \left(  \TwoNorm{  {u}}{ {T} }+\norm{\nabla A_{T}}\TwoNorm{  \nabla {u}}{ {T} }\right)\\
&\lesssim H ^{-\frac{ 1}{2} }  H^{-\frac{a}{2}} \left(  \TwoNorm{  {u}}{ {T} }+H\TwoNorm{  \nabla {u}}{ {T} }\right).
\end{align*}
Thus, with $a=1-2s$ we obtain the estimate \eqref{traceboundH}. 
\end{proof}
We also have the Poincar\'{e} inequality in the non-weighted trace space $\mathbb{H}^s(\Omega)$.
\begin{lemma}\label{tracePoincare}
Let $T\in {\cal T}_{\cilt}$, $F=\partial T\cap \Omega$ be the face (edge), 
and $u\in \mathbb{H}^s(F)$.
Then, we have the following inequality 
\begin{align*}
	\norm{ u-\avrg{u}{F}}_{L^2 (F) }   \lesssim H^s\norm{u}_{\mathbb{H}^s(F)}.
\end{align*}
\end{lemma}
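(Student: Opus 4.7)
The plan is to reduce to a standard fractional Poincaré inequality on a reference face and then undo the scaling. Let $\hat{F}$ denote the reference face (of unit diameter) and $A_F : \hat{F} \to F$ an affine map with $\text{diam}(F)\approx H$. Writing $\hat{u} = u \circ A_F$, the two ingredients I need are: (i) the classical zero-mean fractional Poincaré inequality on the fixed reference face,
\[
  \|\hat{u}-\langle \hat{u}\rangle_{\hat{F}}\|_{L^2(\hat{F})} \lesssim |\hat{u}|_{H^s(\hat{F})},
\]
which holds for every $s\in(0,1)$ by a standard compactness/contradiction argument on the bounded Lipschitz domain $\hat{F}$ (and agrees with the usual fractional Poincaré inequality); and (ii) the scaling laws of the $L^2$ norm and the Gagliardo seminorm under the affine change of variables.

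For (ii), since $F$ is a $d$-dimensional face, $dx = |F|\,d\hat{x} \sim H^d d\hat{x}$, so $\|u\|^2_{L^2(F)} \sim H^d \|\hat{u}\|^2_{L^2(\hat{F})}$. For the Gagliardo seminorm, the double integral gives a factor $H^{2d}$ from the two measures, while $|x-x'|^{d+2s} \sim H^{d+2s}|\hat{x}-\hat{x}'|^{d+2s}$, producing
\[
  |\hat{u}|^2_{H^s(\hat{F})} \;\lesssim\; H^{2s-d}\,|u|^2_{H^s(F)}.
\]
Averages transform by $\langle u\rangle_F = \langle \hat{u}\rangle_{\hat{F}}$, so pulling back the reference inequality gives
\[
  \|u-\langle u\rangle_F\|^2_{L^2(F)}
    \;\lesssim\; H^d\,|\hat{u}|^2_{H^s(\hat{F})}
    \;\lesssim\; H^d\cdot H^{2s-d}|u|^2_{H^s(F)}
    \;=\; H^{2s}|u|^2_{H^s(F)},
\]
which is the claim after bounding the seminorm by the full $\mathbb{H}^s(F)$-norm.

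The only subtle point is the critical case $s=\tfrac{1}{2}$, where $\mathbb{H}^{1/2}(F)=H^{1/2}_{00}(F)$ carries an additional distance-to-boundary weight. However, the Poincaré-type inequality only needs the Gagliardo seminorm and is insensitive to that extra term, so the argument is uniform in $s$. The main thing to be careful about is therefore bookkeeping the exponent of $H$ correctly; once the scaling calculation is done, the result follows directly from the reference inequality.
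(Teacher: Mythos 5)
Your argument is correct and is precisely the standard scaling proof that the paper delegates to the cited reference (Ern--Guermond, Lemma 7.1): pull back to a reference face, apply the zero-mean fractional Poincar\'e inequality there (which follows either by compactness or directly from Jensen's inequality since $\mathrm{diam}(\hat F)\lesssim 1$), and track the exponents $H^{d}$ and $H^{2s-d}$ from the $L^2$ norm and the Gagliardo seminorm, whose product gives $H^{2s}$ as you compute. Your handling of the critical case $s=\tfrac12$ is also right, since the $H^{1/2}_{00}$ norm dominates the Gagliardo seminorm.
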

\begin{proof}
This can be seen in \cite[Lemma 7.1]{ErnGuermond2015}. 
\end{proof}
\par
Finally, we will need the Caccioppoli inequality
for truncation arguments of the sub-grid correctors in Appendix \ref{truncproofsection}.
Here we recall the Caccioppoli inequality presentation as in \cite{caffarelli2016fractional}. 
Let $B_{r}(x_0)$ be the $r$-ball in $\mathbb{R}^d$, centered at $x_0$ 
and define the cylinder $B_{r}(x_0)^*=B_{r}(x_0)\times (0,r)\subset \cilt$. 
Choosing $x_0=0$ and suppressing this notation we consider the following problem: 
Find $u\in \Hspace{B_1^*}$ such that
\begin{subequations}\label{fracLaplace.cacc}
\begin{align}
  \text{div}\left( y^a B(x)\nabla u   \right)&=\text{div}\left( y^a g \right) \quad\text{in } B_1^*, \\
  -y^{a}\frac{\partial u}{\partial y}\Bigg|_{y=0}&=f  \quad\text{on }  B_1,
\end{align}
\end{subequations}
with $g_{i}\in L^2(B_1^*,y^a)$, $i=1,\dots, d$, and $g_{d+1}=0$. 
Suppose without loss of generality that 
$B(0)={ I}$, then we have the following lemma.
\begin{lemma}[Caccioppoli Inequality]\label{Caccioppoli}
Let $u$ be a weak solution to \eqref{fracLaplace.cacc}, 
then for $\eta\in C^{\infty}(\overline{B_1^*})$, that vanishes on
$\partial B_{1}^* \backslash \overline{B_1} $ we have
\begin{align}\label{cacc.ineq}
    \int_{B_{1}^*}y^a \eta^2 |\nabla u|^2\,d\x
    \lesssim \int_{B_{1}^*}y^a\left( |\nabla \eta |^2 u^2 +|g|^2\eta^2\right)\,d\x+\int_{B_1} (\eta(x,0))^2 |u(x,0)| |f(x)| \,dx.
\end{align}
\end{lemma}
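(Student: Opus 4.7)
The plan is to test the weak form of \eqref{fracLaplace.cacc} with $\psi = \eta^2 u$, exploit ellipticity of $B$, and absorb all resulting $\eta^2|\nabla u|^2$ contributions on the right back into the left via Young's inequality. Since $\eta$ vanishes on $\partial B_1^* \setminus \overline{B_1}$ and $u \in \Hspace{B_1^*}$, the test function $\eta^2 u$ lies in the admissible space $\Hdir{B_1^*}$ (with the appropriate boundary condition on the outer lateral boundary interpreted correctly for a localized Caccioppoli estimate).

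First I would derive the weak formulation carefully. Multiplying the first equation of \eqref{fracLaplace.cacc} by $\psi$, integrating over $B_1^*$, and integrating by parts, the only surviving boundary contribution comes from the face $\{y=0\} \cap B_1$, where the outward normal points in the $-y$ direction; using $g_{d+1}=0$ and the Neumann condition $-y^a \partial_y u|_{y=0} = f$, this boundary term equals $\int_{B_1} f(x)\psi(x,0)\,dx$. All other boundary contributions vanish because $\eta$ (hence $\psi$) is supported away from $\partial B_1^* \setminus \overline{B_1}$. The resulting identity reads
\begin{equation*}
\int_{B_1^*} y^a B(x)\nabla u\cdot\nabla\psi\,d\x
= \int_{B_1^*} y^a g\cdot\nabla\psi\,d\x
+ \int_{B_1} f(x)\psi(x,0)\,dx.
\end{equation*}

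Next I would substitute $\psi = \eta^2 u$ so that $\nabla\psi = 2\eta u\,\nabla\eta + \eta^2\nabla u$. The diagonal term produces $\int y^a \eta^2\,B\nabla u\cdot\nabla u$, which by uniform ellipticity bounds below $\alpha\int y^a\eta^2|\nabla u|^2$. The cross term $2\int y^a\eta u\,B\nabla u\cdot\nabla\eta$ on the left and the term $\int y^a\,g\cdot(2\eta u\nabla\eta + \eta^2\nabla u)$ on the right are each split by Young's inequality $2ab \leq \epsilon a^2 + \epsilon^{-1}b^2$ into pieces controllable by $\epsilon\int y^a\eta^2|\nabla u|^2$ (absorbed left) plus $C_\epsilon\int y^a(|\nabla\eta|^2 u^2 + |g|^2\eta^2)$ (kept right). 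Choosing $\epsilon$ small enough relative to $\alpha$ and $\beta$ yields exactly the right-hand side of \eqref{cacc.ineq}, modulo the boundary term $\int_{B_1}\eta^2(x,0)\,u(x,0)f(x)\,dx$, which appears directly from the Neumann contribution and is bounded in absolute value by the corresponding integrand in \eqref{cacc.ineq}.

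The main obstacle I anticipate is the boundary term: one must be certain that $\psi(x,0) = \eta^2(x,0) u(x,0)$ makes sense as a trace and that the integration by parts is justified. Lemma \ref{CanonicalTrace} provides $u(x,0) \in \mathbb{H}^s(B_1)$, and $\eta \in C^\infty(\overline{B_1^*})$ ensures $\eta^2(\cdot,0)$ is bounded, so the pairing is well-defined. A minor secondary point is that ellipticity of $B$ is used only through the constants $\alpha,\beta$, so the normalization $B(0)=I$ plays no essential role in the inequality itself—it is stated merely to fix a canonical local setting consistent with \cite{caffarelli2016fractional}. Once these structural points are verified, the rest is a bookkeeping exercise in Young's inequality.
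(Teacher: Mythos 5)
Your proof is correct and is the standard Caccioppoli argument (test with $\eta^2 u$, ellipticity, Young's inequality, and the Neumann boundary term yielding the $\int_{B_1}\eta^2|u||f|$ contribution); the paper itself gives no proof but simply cites \cite[Lemma 3.2]{caffarelli2016fractional}, which proceeds in essentially the same way. Your observations that the normalization $B(0)=I$ is irrelevant for the inequality and that admissibility of the test function follows from $\eta$ vanishing on $\partial B_1^*\setminus\overline{B_1}$ are both accurate.
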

\begin{proof}
See  \cite[Lemma 3.2]{caffarelli2016fractional}
\end{proof}
\begin{remark}Note that away from the critical boundary, the standard Caccioppoli inequality will also hold due 
to the boundedness of the weight $y^a$ on bounded domains.
\end{remark}

\section{Quasi-Interpolation in Weighted Sobolev Spaces}\label{quasi_int_section}
Here we construct a quasi-interpolation operator for weighted Sobolev spaces using a hybrid of local $L^2$ 
projections onto $d$ and $d+1$ dimensional simplices \cite{bramble2002stability,scott1990finite}. 
We begin by introducing the discretization with a classical nodal basis. From here we are able to build a 
quasi-interpolation based on local weighted $L^2$ projections. The novelty here being that we do not only include 
the weighted spaces, but also augment the quasi-interpolation on the critical trace $\Omega$. We have two 
types of local $L^2$ projections, one onto the nodes of the cylinder domain $\cilt$ and a lower dimensional 
projection onto nodes on $\Omega$.
We then state the local stability and approximability properties of these operators both in the interior of the 
domain and for the canonical traces. We utilize arguments of proof along the lines of \cite{melenkApel}.  

\subsection{Classical Nodal Basis}
The key idea is that the resulting quasi-interpolation is stable in the weighted Sobolev norm, 
and stable on $\Omega$ in the lower regularity space $\mathbb{H}^s(\Omega)$. 
Following much of the notation in \cite{MP11}, recall that we suppose that we have a coarse quasi-uniform, 
shape-regular discretization ${\cal T}_{\cilt}$ of the domain $\cilt$ with characteristic mesh size $H$. 
Similarly, we denote the restricted mesh onto the lower dimensional space
$\Omega$, to be  ${\cal T}_{\Omega}$.
\par
We denote all the nodes of the mesh as ${\cal N}$. The interior   nodes of ${\cal T}_{\cilt}$ 
(not including nodes on $\Omega$, nor vanishing Dirichlet condition) 
we denote as ${\cal N}_{int}$, the free nodes on $\Omega$ are denoted as 
${\cal N}_{\Omega}$, and the Dirichlet nodes as ${\cal N}_{dir}$. Also, it will be 
useful to combine all the nodes with degrees of freedom, 
we denote those as ${\cal N}_{dof}={\cal N}_{int}\cup{\cal N}_{\Omega}$.
We will write ${\cal N}(\omega)$ for nodes  in  $\overline{\omega}$, 
similarly for interior, boundary, or Dirichlet nodes.
Let the classical conforming $\mathbb{P}_{1}$
finite element space over ${\cal T}_{\cilt}$ be given by $S_{H}$,
and let $V_{H}=S_{H}\cap \Hdir{\cilt}$. 
Utilizing the notation in \cite{nochetto2015pde},  we denote $\vnode\in{\cal N}_{}$ as nodal values. 
The $\mathbb{P}_1$ nodal basis functions $\lambda_{\vnode}$, for all nodes $\vnode \in{\cal N}$,
form a basis for $V_{H}$, and are defined for a node $\vnode \in{\cal N}$ as
\begin{align}\label{P1}
    \lambda_{\vnode}(\vnode)=1 
    \text{ and } 
    \lambda_{\wnode}(\vnode)=0, \vnode\neq \wnode\in {\cal N}.
\end{align}
We define the  patch around $\vnode$ as
\[
\omega_{\vnode}=\bigcup_{T \ni \vnode}T,
\]
for $T\in {\cal T}_{\cilt}$.
Using the definition and notation in \cite{Henning.Morgenstern.Peterseim:2014}, 
we define for any patch $\omega_{\vnode}$
the extension patch
\begin{subequations}\label{patches}
\begin{align}
	{\omega}_{\vnode}&={\omega}_{\vnode,0}=\text{supp}( \lambda_{\vnode}) \cap \cilt,\\
	{\omega}_{\vnode,k}&=\text{int}(\cup \{ T\in {\cal T}_{\cilt}| T\cap \bar{\omega}_{\vnode,k-1}\neq \emptyset \}\cap \cilt,
\end{align}
\end{subequations}
for $k\in \mathbb{N}_{+}$.
Suppose that for these patches $\frac{|B|}{|{\omega}_{\vnode,k}|}\lesssim 1$ for some ball $B$ containing 
${\omega}_{\vnode,k}$,
thus we have the bound 
\begin{align}
    &\left(\frac{1}{|{\omega}_{\vnode,k}|}\int_{{\omega}_{\vnode,k}} w \, d\x  \right)
    \left(\frac{1}{|{\omega}_{\vnode,k}|}\int_{{\omega}_{\vnode,k}} w^{-1} \, d\x \right) 
     \lesssim \left(\frac{|B|}{|{\omega}_{\vnode,k}|}\frac{1}{|B|} \int_{B} w \, d\x  \right)
     \left(\frac{|B|}{|{\omega}_{\vnode,k}|}\frac{1}{|B|} \int_{B} w^{-1} \, d\x \right)\nonumber \\
   &\qquad \lesssim \left(\frac{|B|}{|{\omega}_{\vnode,k}|  } \right)^2 \left(\frac{1}{|B|} \int_{B} w \, d\x  \right)
   \left(\frac{1}{|B|} \int_{B} w^{-1} \, d\x \right) 
   \lesssim \left(\frac{|B|}{|{\omega}_{\vnode,k}|  } \right)^2  C_{2,w} 
   \lesssim C_{2,w},  \label{Muckenhouptconstant.patch}
\end{align}
where we utilized the bound \eqref{Muckenhouptconstant}.
Hence, we can apply the Muckenhoupt weight 
bounds to the patches $\omega_{\vnode,k}$.
\par
We will also need to define the boundary-$\Omega$ patches. Let $\vnode \in {\cal N}_\Omega$, and we take 
$\tr{}{\lambda_{\vnode}}=\lambda_\vnode(x,0),$ and denote 
\begin{subequations}\label{boundarypatches}
\begin{align}
  \partial{\omega}_{\vnode}
  &=\partial {\omega}_{\vnode,0}=\text{supp}( \tr{}{\lambda_{\vnode}}  ) \cap \Omega,\\
  \partial {\omega}_{\vnode,k}
  &=\text{int}(\cup \{ T\in {\cal T}_{\Omega}| T\cap \overline{\partial \omega}_{\vnode,k-1}\neq \emptyset \}\cap \Omega.
\end{align}
\end{subequations}
We will denote $V_{H}|_{\omega}$ to be the coarse-grid space restricted to some domain $\omega$. 

\subsection{Quasi-Interpolation Operator}
The authors in \cite{nochetto2015pde} construct a quasi-interpolation based on a higher order Cl\'{e}ment type 
of operator.
However, in this section, we develop a quasi-interpolation operator that is also a projection in the weighted Sobolev 
space and satisfies beneficial properties on the trace.  This projective quasi-interpolation gives stability 
properties required for the localization theory.
This is a  modification of the operator of \cite{bramble2002stability} and was utilized in perforated domains in 
\cite{brown2016multiscale}. 
Here we adapt this technique to the $y^a$-weighted setting with a slight modification of the $\Omega$ 
boundary terms in the flavor of Scott-Zhang \cite{scott1990finite}.
\par
We now define the two  local  weighted $L^2$ projections. For $\vnode\in{\cal N}_{int}$,
${\cal P}_{\vnode}: L^2({\omega}_{\vnode},y^a)\to {V}_{H}|_{\omega_{\vnode}}$ is 
the local projection operator such that
\begin{align}\label{L2proj}
\int_{{\omega}_{\vnode}}( {\cal P}_{\vnode}u) v_{H} y^a \,d\x= \int_{{\omega}_{\vnode}}u v_{H} y^a \,d\x 
\quad\text{for all } v_{H}\in  {V}_{H}|_{\omega_{\vnode}},
\end{align}
and for  $\vnode\in{\cal N}_{\Omega}$, 
${\cal P}^{\Omega}_{\vnode}: L^2({\partial \omega}_{\vnode})\to {V}_{H}|_{\partial \omega_{\vnode}}$
is the boundary operator such that
\begin{align}\label{L2proj.bd}
\int_{{\partial \omega}_{\vnode}}( {\cal P}^{\Omega}_{\vnode}u) v_{H} dx
= \int_{{\partial \omega}_{\vnode}}u v_{H}  dx 
\quad\text{for all } v_{H}\in  {V}_{H}|_{\partial \omega_{\vnode}}.
\end{align}
From this we define the quasi-interpolation operator 
$\Qint: \Hdir{\cilt} \to {V}_{H}$ for $u\in \Hdir{\cilt}$ as
\begin{align}\label{proj}
\Qint u(\x)=\sum_{\vnode\in{\cal N}_{int}} ({\cal P}_{\vnode} u)(\vnode) \lambda_{\vnode}(\x)+
\sum_{\vnode\in{\cal N}_{\Omega}} ({\cal P}^{\Omega}_{\vnode} u)(\vnode) \lambda_{\vnode}(\x).
\end{align}
\begin{remark}
Note that for a node $\vnode \in {\cal N}_{dir}$, i.e. on $\partial_L \cilt$, the local $L^2$ boundary 
projection operator maybe defined  as 
\begin{align}\label{L2proj.bd.dir}
	\int_{{ \omega}_{\vnode}\cap \partial_L \cilt  }( {\cal P}^{\partial_L \cilt}_{\vnode}u) v_{H} y^{a}  \,dy
	= \int_{{ \omega}_{\vnode}\cap \partial_L \cilt  }u v_{H} y^a   \,dy 
	\quad\text{for all } v_{H}\in  {V}_{H}|_{{ \omega}_{\vnode}\cap \partial_L \cilt  }.
\end{align}
However, $({\cal P}^{\partial_L \cilt }_{\vnode}u)(\vnode)=0,$ since $u=0$. Thus, we take the sum over all the 
nodes, unlike the case of utilizing a $d+1$ dimensional  operator also on the boundary, where
$({\cal P}_{\vnode}u)(\vnode)\neq 0$ for $\vnode \in {\cal N}_{dir}$. 
This simplifies the analysis of the quasi-interpolation operator near the Dirichlet boundary slightly. 
\end{remark}

\subsection{Local Stability and Approximability}
We have the following stability and local approximation properties
of the quasi-interpolation operator $\Qint$ defined by 
\eqref{proj}.
The proof of this lemma is based on that presented in  \cite{melenkApel}.
\begin{lemma}\label{stablelemma}
Let $\Qint$ be given by \eqref{proj} and $\vnode\in {\cal N}$. 
The quasi-interpolation satisfies the following stability estimate for all $u\in  \Hdir{\cilt}$
\begin{subequations}
\begin{align}
  \TwoNorm{\Qint u}{{\omega}_{\vnode}  }
  &\lesssim \TwoNorm{ u}{{\omega}_{\vnode,1}  }+H\TwoNorm{\nabla u}{{\omega}_{\vnode,1}  }, \\ 
  \TwoNorm{\nabla \Qint u}{{\omega}_{\vnode}  }
  &\lesssim \TwoNorm{\nabla u}{{\omega}_{\vnode,1}}. \label{stableproj0}
\end{align}
\end{subequations}
Further, the following approximation estimates hold
\begin{subequations}\label{stableproj}
\begin{align}
  \TwoNorm{u-\Qint u}{{\omega}_{\vnode}  } 
  &\lesssim    H \TwoNorm{\nabla  u}{{\omega}_{\vnode,1} }, \label{stableproj1}\\
  \TwoNorm{\nabla (u-\Qint u)}{ {\omega}_{\vnode} } 
  &\lesssim  \TwoNorm{\nabla  u}{\omega_{\vnode,1}} \label{stableproj2}.
\end{align}
\end{subequations}
Moreover, the quasi-interpolation $\Qint$ is a projection.
\end{lemma}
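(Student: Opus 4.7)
The overall plan is to follow the standard roadmap for Scott--Zhang/Bramble type quasi-interpolants, but adapted to the Muckenhoupt $A_2$ setting. The backbone consists of three ingredients: (i) the projection property $\Qint v = v$ for $v\in V_H$, (ii) an element-local $L^2$ stability in the weighted norm, and (iii) a Bramble--Hilbert style argument that upgrades stability to approximability by exploiting invariance on constants together with the weighted Poincaré inequality of Lemma~\ref{poincare2}. The projection property is the cheapest step: if $v_H\in V_H$ then $v_H|_{\omega_\vnode}\in V_H|_{\omega_\vnode}$, so ${\cal P}_\vnode v_H=v_H$ and $({\cal P}_\vnode v_H)(\vnode)=v_H(\vnode)$; the analogous identity holds for $({\cal P}^{\Omega}_\vnode v_H)(\vnode)$ on $\partial\omega_\vnode$, and since Dirichlet nodes contribute zero, $\Qint v_H=\sum_{\vnode\in{\cal N}_{dof}}v_H(\vnode)\lambda_\vnode=v_H$. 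In particular $\Qint$ reproduces constants on patches that do not touch $\partial_L\cilt$; near the Dirichlet boundary one needs the usual Scott--Zhang caveat that the Poincaré/invariance arguments are applied to patches that are away from $\partial_L\cilt$ or use the boundary condition directly.

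For the $L^2$-stability I would argue element by element. Write $\Qint u|_T=\sum_{\vnode\in{\cal N}(T)}\alpha_\vnode\lambda_\vnode$ and estimate $\TwoNorm{\Qint u}{T}^2\lesssim \bigl(\int_T y^a\bigr)\sum_\vnode \alpha_\vnode^2$. For interior nodes, the weighted inverse inequality of Proposition~\ref{infitytoL2} gives
\[
|\alpha_\vnode|=|({\cal P}_\vnode u)(\vnode)|
\lesssim |\omega_\vnode|^{-1}\norm{y^{-a/2}}_{L^2(\omega_\vnode)}\TwoNorm{{\cal P}_\vnode u}{\omega_\vnode}
\lesssim |\omega_\vnode|^{-1}\norm{y^{-a/2}}_{L^2(\omega_\vnode)}\TwoNorm{u}{\omega_\vnode},
\]
and the product $(\int_T y^a)(\int_{\omega_\vnode}y^{-a})|\omega_\vnode|^{-2}$ is controlled by the patch-level Muckenhoupt bound \eqref{Muckenhouptconstant.patch}, giving $\TwoNorm{\Qint u}{T}\lesssim \TwoNorm{u}{\omega_\vnode}$. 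For boundary nodes $\vnode\in{\cal N}_\Omega$ the $d$-dimensional unweighted inverse inequality yields $|\alpha_\vnode|^2\lesssim |\partial\omega_\vnode|^{-1}\norm{u}_{L^2(\partial\omega_\vnode)}^2$; a direct computation using $y^a$ near $y=0$ gives $\int_T y^a\lesssim H^{2-2s}|F|$, so inserting the weighted trace inequality \eqref{traceboundH} absorbs the prefactor $H^{2s-2}$ and produces precisely $\TwoNorm{u}{{\omega}_{\vnode,1}}+H\TwoNorm{\nabla u}{{\omega}_{\vnode,1}}$.

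Given stability, the remaining estimates are standard. For the $H^1$ stability \eqref{stableproj0} I would use constant-invariance $\Qint c=c$ on the enlarged patch (picking $c=\avrg{u}{\omega_{\vnode,1}}$), a weighted inverse inequality $\TwoNorm{\nabla p}{T}\lesssim H^{-1}\TwoNorm{p}{T}$ for $p\in \mathbb{P}_1$ (again available thanks to the $A_2$ weight), and the weighted Poincaré inequality of Lemma~\ref{poincare2}:
\[
\TwoNorm{\nabla \Qint u}{\omega_\vnode}=\TwoNorm{\nabla \Qint(u-c)}{\omega_\vnode}
\lesssim H^{-1}\TwoNorm{\Qint(u-c)}{\omega_\vnode}
\lesssim H^{-1}\TwoNorm{u-c}{\omega_{\vnode,1}}
\lesssim \TwoNorm{\nabla u}{\omega_{\vnode,1}}.
\]
The approximation bound \eqref{stableproj1} follows from $u-\Qint u=(u-c)-\Qint(u-c)$, the $L^2$-stability, and Poincaré, while \eqref{stableproj2} is the triangle inequality plus \eqref{stableproj0}. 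The main obstacle I expect is the boundary-node case: one must match the weighted trace scaling $H^{s-1},H^s$ against the scalings $\int_T y^a\sim H^{2-2s}|F|$ and $|\partial\omega_\vnode|\sim H^d$ so that all powers of $H$ cancel; this is the step where the two heterogeneous projections (the $d+1$-dimensional weighted one and the $d$-dimensional unweighted Scott--Zhang-style one) have to be glued together correctly, and the Muckenhoupt property is the only tool that prevents the degenerate weight from destroying the constants.
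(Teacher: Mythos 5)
Your proposal is correct and follows essentially the same route as the paper: nodal-value bounds via the weighted $L^\infty$--$L^2$ inverse inequality of Proposition~\ref{infitytoL2} and the Muckenhoupt patch bound \eqref{Muckenhouptconstant.patch}, separate treatment of the $(d+1)$-dimensional weighted and $d$-dimensional boundary projections, and a Bramble--Hilbert upgrade via constant reproduction plus the weighted Poincar\'{e} inequality of Lemma~\ref{poincare2}. The only (harmless) deviations are technical substitutions of interchangeable tools: you invoke the $L^2$-orthogonality of ${\cal P}_{\vnode}$ and the weighted $L^2$ trace inequality \eqref{traceboundH} where the paper runs an $L^1$-duality argument combined with the $L^1$ trace bound \eqref{traceboundH.L1}, and the weighted inverse inequality $\TwoNorm{\nabla p}{T}\lesssim H^{-1}\TwoNorm{p}{T}$ you use for the $H^1$ stability, while not stated explicitly in the paper, does follow from Proposition~\ref{infitytoL2}, the bound on $\TwoNorm{\nabla \lambda_{\vnode}}{T}$, and the $A_2$ property exactly as you claim.
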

\begin{proof}
With the  quasi-interpolant \eqref{proj} including the Dirichlet nodes  it has the same property as 
Scott-Zhang \cite{scott1990finite} of preserving the vanishing  Dirichlet boundary conditions. Thus, we 
implicitly sum over the Dirichlet nodes in what follows and need not take special care of boundary nodes as in 
Cl\'{e}ment quasi-interpolation.
\par
In the first case, suppose that
$\vnode'\in {\cal N}_{int}(\omega_\vnode)$ is an interior node, 
then noting that ${\cal P}_{\vnode'}u$ is finite dimensional and
using Proposition \ref{infitytoL2}, we arrive at
\begin{align*}
    \norm{ {\cal P}_{\vnode'} u}_{L^{\infty}(\omega_{\vnode'})}
    &\lesssim  |\omega_{\vnode'}|^{-1}\norm{ {\cal P}_{\vnode'} u}_{L^1 (\omega_{\vnode'}) }
    \lesssim   |\omega_{\vnode'}|^{-1}\left(\int_{\omega_{\vnode'}}y^{-a}\,d\x\right)^{\frac{1}{2}}\norm{ {\cal P}_{\vnode'} u}_{L^2(\omega_{\vnode'},y^a ) }.
\end{align*}
From \eqref{L2proj}, letting $v_H={\cal P}_{\vnode'} u$, we get
\begin{align*}
  &\norm{ {\cal P}_{\vnode'} u}^2_{L^2(\omega_{\vnode'},y^a ) }
  =\int_{{\omega}_{\vnode'}} |{\cal P}_{\vnode'}u|^2 y^a \,d\x
  = \int_{{\omega}_{\vnode'}}u ({\cal P}_{\vnode'}u) y^a \,d\x
  \leq \norm{u }_{L^1(\omega_\vnode', y^a)}\norm{ {\cal P}_{\vnode'} u}_{L^{\infty}(\omega_{\vnode'})}.
\end{align*}
Thus,  manipulating the two above identities yields
\begin{align*}
  \norm{ {\cal P}_{\vnode'} u}_{L^{\infty}(\omega_{\vnode'})}^2 
  &\lesssim |\omega_{\vnode'}|^{-2}\left(\int_{\omega_{\vnode'}}y^{-a}\,d\x\right)\
      \norm{ {\cal P}_{\vnode'} u}^2_{L^2(\omega_{\vnode'},y^a ) }\\
  & \lesssim  |\omega_{\vnode'}|^{-2}\left(\int_{\omega_{\vnode'}}y^{-a}\,d\x\right)
     \norm{u }_{L^1(\omega_\vnode', y^a)}	\norm{ {\cal P}_{\vnode'} u}_{L^{\infty}(\omega_{\vnode'})},
\end{align*}
and so, by taking a larger patch we have 
\begin{align}\label{nodeinfty}
  \left | {\cal P}_{\vnode'} u(\vnode')\right| \lesssim   |\omega_{\vnode,1}|^{-2}
  \left(\int_{\omega_{\vnode,1}}y^{-a}\,d\x\right)^{}\norm{u }_{L^1(\omega_{\vnode,1}, y^a)}.
\end{align}
\par
In the second case, suppose that $\vnode'\in {\cal N}_{\Omega}(\omega_\vnode)$
is a node on the boundary $\Omega$, and so we use the local (unweighted)
$L^2$ projection on the boundary given by \eqref{L2proj.bd}. 
Again, noting that ${\cal P}^{\Omega}_{\vnode'}u$ is finite dimensional and  using an inverse inequality
we get
\begin{align*}
  \norm{ {\cal P}^{\Omega}_{\vnode'} u}_{L^{\infty}(\partial \omega_{\vnode'})}
  &\lesssim |\partial \omega_{\vnode'}|^{-\frac{1}{2}} \norm{ {\cal P}^{\Omega}_{\vnode'} u}_{L^2(\partial \omega_{\vnode'}) }.
\end{align*}
From \eqref{L2proj.bd}, we obtain 
\begin{align*}
    \norm{ {\cal P}^{\Omega}_{\vnode'} u}^2_{L^2(\partial \omega_{\vnode'} ) }
    &=\int_{\partial {\omega}_{\vnode'}} |{\cal P}^{\Omega}_{\vnode'}u|^2 \,dx
    = \int_{\partial {\omega}_{\vnode'}}u ({\cal P}^{\Omega}_{\vnode'}u) \,dx
    \leq \norm{u }_{L^1(\partial \omega_\vnode' )}\norm{ {\cal P}^{\Omega}_{\vnode'} u}_{L^{\infty}(\partial \omega_{\vnode'})}.
\end{align*}
Thus,  again manipulating the two above identities yields
\begin{align*}
  \norm{ {\cal P}^{\Omega}_{\vnode'} u}_{L^{\infty}(\partial \omega_{\vnode'})}^2 
  &\lesssim |\partial \omega_{\vnode'}|^{-1} \norm{ {\cal P}^{\Omega}_{\vnode'} u}^2_{L^2(\partial \omega_{\vnode'} ) }
  \lesssim  |\partial \omega_{\vnode'}|^{-1} \norm{u }_{L^1(\partial \omega_\vnode')}	
   \norm{ {\cal P}^{\Omega}_{\vnode'} u}_{L^{\infty}(\partial \omega_{\vnode'})},
\end{align*}
and so, by taking a larger patch and utilizing the trace inequality \eqref{traceboundH.L1} we obtain 
\begin{align}\label{nodeinfty.bd}
  \left | {\cal P}^{\Omega}_{\vnode'} u(\vnode')\right| 
  &\lesssim |\partial \omega_{\vnode,1}|^{-1} \norm{u }_{L^1(\partial \omega_{\vnode,1})} 
  \lesssim | \omega_{\vnode,1}|^{-1} \left( \norm{  u}_{L^1(\omega_{\vnode,1})} + H\norm{\nabla u}_{L^1(\omega_{\vnode,1})}\right).
\end{align}
\par
Finally, we note that by taking a larger patch $\omega_{\vnode,1}$, we have
\begin{align}\label{basisfunctionweighted}
  \TwoNorm{\lambda_{\vnode'}}{\omega_{\vnode}}
  \lesssim \left(\int_{\omega_{\vnode,1}}y^{a}\,d\x\right)^{\frac{1}{2}}, 
  \quad\text{and}\quad
  \TwoNorm{\nabla \lambda_{\vnode'}}{\omega_{\vnode}}\lesssim H^{-1} \left(\int_{\omega_{\vnode,1}}y^{a}\,d\x\right)^{\frac{1}{2}}.
\end{align}
For the quasi-interpolation $\Qint(u)$ we have
\[
\Qint(u)=\sum_{\vnode'\in{\cal N}_{int}(\omega_\vnode)} 
({\cal P}_{\vnode'} u)(\vnode') \lambda_{\vnode'}
+\sum_{\vnode'\in{\cal N}_{\Omega}(\omega_\vnode)} 
({\cal P}^{\Omega}_{\vnode'} u)(\vnode') \lambda_{\vnode'} \quad\text{in }\omega_\vnode.
\]
For the $L^2$ stability we note that from \eqref{nodeinfty}--\eqref{basisfunctionweighted} we get
\begin{align}\label{stability.QI1}
\TwoNorm{\Qint(u)}{\omega_\vnode}
  &\leq \sum_{\vnode'\in{\cal N}_{int}(\omega_\vnode)} 
    \left|({\cal P}_{\vnode'} u)(\vnode')\right| 
    \TwoNorm{ \lambda_{\vnode'}}{\omega_\vnode}+\sum_{\vnode'\in{\cal N}_{\Omega}(\omega_\vnode)} 
    \left|({\cal P}^{\Omega}_{\vnode'} u)(\vnode')\right| 
    \TwoNorm{\lambda_{\vnode'}}{\omega_{\vnode}} \nonumber \\
  &\lesssim |\omega_{\vnode,1}|^{-2}
    \left(\int_{\omega_{\vnode,1}}y^{-a}\,d\x\right)
    \left(\int_{\omega_{\vnode,1}}y^{a}\,d\x\right)^{\frac{1}{2}}
    \norm{u }_{L^1(\omega_{\vnode,1}, y^a)}\nonumber \\
  &+| \omega_{\vnode,1}|^{-1}
  \left(\int_{\omega_{\vnode,1}}y^{a}\,d\x\right)^{\frac{1}{2}} 
  \left(  \norm{  u}_{L^1(\omega_{\vnode,1})} + H \norm{  \nabla u}_{L^1(\omega_{\vnode,1})}\right).
\end{align}
Now we analyze each part carefully. Note that 
\begin{align}\label{L1-L2estimate}
\norm{u }_{L^1(\omega_{\vnode,1}, y^a)}
\lesssim \left(\int_{\omega_{\vnode,1}}y^{a}\,d\x\right)^{\frac{1}{2}} \TwoNorm{ u}{\omega_{\vnode,1}}.
\end{align}
Since $y^\alpha$ belongs to the Muckenhoupt class $A_2(\mathbb{R}^{d+1})$,
we get from \eqref{Muckenhouptconstant.patch}
\begin{align*}
&
  |\omega_{\vnode,1}|^{-2}\left(\int_{\omega_{\vnode,1}}y^{-a}\,d\x\right)
  \left(\int_{\omega_{\vnode,1}}y^{a}\,d\x\right)^{\frac{1}{2}}\norm{u }_{L^1(\omega_{\vnode,1}, y^a)} \\
&\qquad\lesssim 
  |\omega_{\vnode,1}|^{-2}\left(\int_{\omega_{\vnode,1}}y^{-a}\,d\x\right)
  \left(\int_{\omega_{\vnode,1}}y^{a}\,d\x\right)^{}\TwoNorm{ u}{\omega_{\vnode,1}}
   \lesssim C_{2,a} \TwoNorm{ u}{\omega_{\vnode,1}}.
\end{align*}
For the second term we use 
\eqref{L1-L2estimate} again, also for the derivative terms, thus  
\begin{align*}
&|\omega_{\vnode,1}|^{-1}
  \left(\int_{\omega_{\vnode,1}}y^{a}\,d\x\right)^{\frac{1}{2}} 
  \left(  \norm{  u}_{L^1(\omega_{\vnode,1})} + H \norm{  \nabla u}_{L^1(\omega_{\vnode,1})}\right)\\
&\qquad\lesssim | \omega_{\vnode,1}|^{-1}
  \left(\int_{\omega_{\vnode,1}}y^{a}\,d\x\right)^{\frac{1}{2}} \left(\int_{\omega_{\vnode,1}}y^{-a}\,d\x\right)^{\frac{1}{2}} 
  \left( \TwoNorm{ u}{\omega_{\vnode,1}} + H  \TwoNorm{ \nabla u}{\omega_{\vnode,1}}\right)\\
&\qquad\lesssim  C_{2,a} \left( \TwoNorm{ u}{\omega_{\vnode,1}} + H  \TwoNorm{ \nabla u}{\omega_{\vnode,1}}\right).
\end{align*}
Returning to \eqref{stability.QI1} we obtain 
\begin{align}\label{L2StableProof}
\TwoNorm{\Qint(u)}{\omega_\vnode} 
\lesssim  \left( \TwoNorm{ u}{\omega_{\vnode,1}} + H \TwoNorm{ \nabla u}{\omega_{\vnode,1}}	\right).
\end{align}
\par
For the $H^1$ stability, first noting that 
$\avrg{u}{\omega_{\vnode,1}}=\Qint(\avrg{u}{\omega_{\vnode,1}})$, 
we denote $\bar{u}=u-\avrg{u}{\omega_{\vnode,1}}$.
Thus, from  \eqref{nodeinfty}--\eqref{basisfunctionweighted}, and 
arguments used to obtain \eqref{L2StableProof}, we arrive at 
\begin{align}\label{stability.QI2}
&\TwoNorm{\nabla \Qint(u)}{\omega_{\vnode,1}} 
=\TwoNorm{\nabla \Qint(\bar{u})}{\omega_{\vnode,1}} \nonumber \\
&\qquad\lesssim \sum_{\vnode'\in{\cal N}_{int}(\omega_\vnode)} \left|({\cal P}_{\vnode'} \bar{u})(\vnode')\right| 
  \TwoNorm{ \nabla \lambda_{\vnode'}}{\omega_\vnode}
  +\sum_{\vnode'\in{\cal N}_{\Omega}(\omega_\vnode)} 
  \left|({\cal P}^{\Omega}_{\vnode'} \bar{u})(\vnode')\right| \TwoNorm{\nabla\lambda_{\vnode'}}{\omega_{\vnode}} \nonumber \\
&\qquad\lesssim H^{-1} \left( \TwoNorm{ \bar{u}}{\omega_{\vnode,1}} + H  \TwoNorm{ \nabla \bar{u}}{\omega_{\vnode,1}}\right)
\lesssim\TwoNorm{ \nabla u}{\omega_{\vnode,1}},
\end{align}
where for the last inequality we used the weighted Poincar\'{e} inequality from Lemma \ref{poincare2}.
\par
To prove the local $L^2$ approximability we note that for $\bar{u}=u-\avrg{u}{\omega_{\vnode,1}}$, 
using Lemma \ref{poincare2} and \eqref{L2StableProof} we get
\begin{align}\label{L2StableProofderp}
&\TwoNorm{u-\Qint(u)}{\omega_\vnode}
  =\TwoNorm{\bar{u}-\Qint(\bar{u})}{\omega_\vnode} 
  \lesssim \TwoNorm{\bar{u} }{\omega_\vnode}+\TwoNorm{ \Qint(\bar{u})}{\omega_\vnode}\nonumber \\
  &\qquad\lesssim H  \TwoNorm{ \nabla u}{\omega_{\vnode}}+
      \left( \TwoNorm{ \bar{u}}{\omega_{\vnode,1}} + H  \TwoNorm{ \nabla \bar{u}}{\omega_{\vnode,1}}\right)
  \lesssim H\TwoNorm{ \nabla {u}}{\omega_{\vnode,1}}.
\end{align}
Thus, local approximability holds, and result \eqref{stableproj2} trivially holds from $H^1$ stability. 
\par
From arguments in \cite{brown2016multiscale} it follows that $\Qint$ is also a projection. 
\end{proof}
\begin{corollary}\label{stablelemma.fractional.boundary}
Suppose $\vnode\in {\cal N}_{\Omega}$ and denote 
$\partial \omega_{\vnode}:=\omega_{\vnode}\cap \Omega$. 
Then, for all $u\in  \Hdir{\cilt}$,
it holds that
\begin{align}\label{stableproj1.fractional.boundary}
&\norm{u-\Qint(u)}_{L^2(\partial \omega_{\vnode}) } 
   \lesssim  H^{s} \TwoNorm{\nabla  u}{{\omega}_{\vnode,1} }.
\end{align}
\end{corollary}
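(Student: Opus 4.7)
My plan is to combine the weighted trace inequality on the critical boundary (Lemma \ref{traceHlemma}) with the interior $L^2$ and $H^1$ estimates from Lemma \ref{stablelemma}. The quantity to control, $u-\Qint u$, lives in $\Hdir{\cilt}$, so element-by-element we may lift its boundary $L^2$-norm on $F=\partial T \cap \Omega$ to a weighted Sobolev norm on $T$.

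First, I would enumerate the elements $T \in \mathcal{T}_\cilt$ contained in $\omega_\vnode$ whose face $F_T := \partial T \cap \Omega$ is nonempty. By definition, the union of these faces covers $\partial \omega_\vnode$. For each such $T$, Lemma \ref{traceHlemma} applied to $v := u - \Qint u$ yields
\begin{align*}
\|u-\Qint u\|_{L^2(F_T)} \lesssim H^{s-1} \TwoNorm{u-\Qint u}{T} + H^{s}\TwoNorm{\nabla(u-\Qint u)}{T}.
\end{align*}
Squaring, summing over the relevant $T \subset \omega_\vnode$, and using that $\partial \omega_\vnode = \bigcup_T F_T$ with uniformly bounded overlap gives
\begin{align*}
\|u-\Qint u\|_{L^2(\partial \omega_\vnode)}^2 \lesssim H^{2s-2}\TwoNorm{u-\Qint u}{\omega_\vnode}^2 + H^{2s}\TwoNorm{\nabla(u-\Qint u)}{\omega_\vnode}^2.
\end{align*}

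At this point I simply insert the local approximation estimate \eqref{stableproj1} and the local $H^1$-stability \eqref{stableproj2} from Lemma \ref{stablelemma} (applied with the same vertex $\vnode$): the first term becomes $H^{2s-2}\cdot H^2 \TwoNorm{\nabla u}{\omega_{\vnode,1}}^2 = H^{2s}\TwoNorm{\nabla u}{\omega_{\vnode,1}}^2$, and the second term is already $H^{2s}\TwoNorm{\nabla u}{\omega_{\vnode,1}}^2$ up to a constant. Taking square roots yields \eqref{stableproj1.fractional.boundary}.

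The only subtlety I anticipate is bookkeeping: one must verify that the weighted trace inequality from Lemma \ref{traceHlemma} can be applied to $u-\Qint u$ on each $T\subset \omega_\vnode$ even though $\Qint u$ is a coarse finite element function (which is fine, since $\Qint u \in V_H \subset \Hdir{\cilt}$), and that the finite-overlap summation of patches $\omega_{\vnode,1}$ absorbs cleanly into a single $\omega_{\vnode,1}$ on the right-hand side with a shape-regularity-dependent constant. No genuinely new inequality is required — this is a direct composition of the weighted trace bound with the already-established local approximability of $\Qint$.
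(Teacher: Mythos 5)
Your proposal is correct and follows essentially the same route as the paper: apply the weighted trace inequality of Lemma \ref{traceHlemma} to $u-\Qint u$ on the elements touching $\Omega$, then absorb the resulting $H^{s-1}\TwoNorm{u-\Qint u}{\omega_{\vnode}}$ and $H^{s}\TwoNorm{\nabla(u-\Qint u)}{\omega_{\vnode}}$ terms via the local approximability \eqref{stableproj1} and stability \eqref{stableproj2} of Lemma \ref{stablelemma}. The extra element-by-element bookkeeping you spell out is implicit in the paper's one-line argument but adds nothing new.
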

\begin{proof}
Recall the weighted trace Lemma \ref{traceHlemma},
and use stability and approximability in the interior from Lemma \ref{stablelemma}
as well as the weighted Poincar\'{e} inequality from Lemma \ref{poincare2} to deduce
\begin{align*} 
  \norm{u-\Qint({u})}_{L^2(\partial \omega_\vnode)} 
  &\lesssim H^{s-1} \TwoNorm{  u-\Qint({u})}{\omega_{\vnode,1} } +H^{s   }  \TwoNorm{  \nabla u}{ \omega_{\vnode,1} }
  \lesssim H^{s}  \TwoNorm{  \nabla u}{ \omega_{\vnode,1} }. \qedhere
\end{align*}
\end{proof}	

\section{Numerical Homogenization}\label{MSmethod}
We will now construct the multiscale approximation space to handle the oscillations created by the 
heterogeneities in the coefficient of the Caffarelli-Silvestre extension problem  \cite{caffarelli2007extension}.
The main ideas of this splitting can be found in 
\cite{brown2016multiscale,Henning.Morgenstern.Peterseim:2014,MP11} 
and references therein. In our computational approach we will for simplicity only consider the 
truncated cylinder $\cilt$ in what follows due to the exponential convergence of the truncated problem to the 
infinite cylinder problem on $\cil$.

\subsection{Multiscale Method}
In this section we construct the multiscale approximation. 
The main ideas of the splitting into a fine-scale and a coarse-scale space 
can be found in \cite{Henning.Morgenstern.Peterseim:2014,MP11} and 
references therein. As noted before the coarse mesh space restricted to $\cilt$ can not resolve the features of 
the microstructure and these fine-scale features must be captured in the multiscale basis. We begin by 
constructing fine-scale spaces.
\par
We define the kernel of the quasi-interpolation operator \eqref{proj} to be 
\begin{align*}
{V}^f=\{v\in  \Hdir{\cilt}  \;| \; \Qint v=0\},
\end{align*}
where $\Qint$ is defined by \eqref{proj}. 
This space will capture the small scale features not resolved by ${V}_H$. 
We define the fine-scale projection $Q_{\cilt}  :{V}_H\to {V}^f$ 
to be the operator such that for $v_H\in {V}_H$ 
we compute $Q_{\cilt} (v)\in {V}^f $ as
\begin{align}\label{corrector}
  \int_{\cilt}B(x)\nabla Q_{\cilt} (v_H) \nabla w\, y^a \,d\x=\int_{\cilt}B(x) \nabla v_H \nabla w\, y^a \,d\x
  \quad\text{for all } w\in {V}^f.
\end{align}
This projection gives an orthogonal splitting $\Hdir{\cilt}={V}^{ms}_{H}\oplus {V}^f$ with the modified coarse 
space 
\[
{V}^{ms}_{H}=({V}_{H}-Q_{\cilt} ( {V}_{H})).
\]
We can decompose any $u\in \Hdir{\cilt}$ as $u=u^{ms}+u^f$ with 
$\int_{\cilt} B(x) \nabla u^{ms} \nabla u^f\, y^a \,d\x=0$.
This  modified coarse space is referred to as the \emph{ideal} multiscale space. The multiscale Galerkin 
approximation  $u_{H}^{ms}\in {V}_{H}^{ms}$ satisfies 
\begin{align}\label{msvarform}
  \int_{\cilt}B(x) \nabla u_{H}^{ms} \nabla v\, y^a \,d\x
  =\int_{\Omega }c_{s} f(x) v(x,0) \,dx 
  \quad\text{ for all } v\in {V}^{ms}_{H} .
\end{align}
\par
The  issue with constructing the solution to \eqref{msvarform} is that the computation of the corrector is global.  
However, it has been shown that the corrector decays exponentially. 
Therefore, we define the localized fine-scale space to be the fine-scale space extended by zero outside the patch, that is 
\[
{V}^f({\omega}_{\vnode,k})=\{v\in {V}^f |\text{   } v|_{\cilt\backslash {\omega}_{\vnode ,k}}=0\}.
\]
We let  for some $\vnode \in {\cal N}_{dof}$ and $k\in \mathbb{N}$ the localized  corrector operator 
$Q_{\vnode,k}: {V}_{H}\to {V}^f({\omega}_{\vnode,k})$
be defined such that given a $u_{H}\in {V}_{H}$
\begin{align}\label{Qcorrector}
  \int_{{\omega}_{\vnode,k}} B(x) \nabla Q_{\vnode ,k}(u_{H}) \nabla w\, y^a \,d\x
  =\int_{{\omega}_{\vnode}}B(x)\hat{\lambda}_{\vnode}\nabla u_{H} \nabla w\, y^a \,d\x
  \quad\text{for all } 
  w \in {V}^f({\omega}_{\vnode ,k}),
\end{align}
where $\hat{\lambda}_{\vnode}=\frac{{\lambda}_{\vnode}}{\sum_{\vnode'\in {\cal N}_{dof }}\lambda_{\vnode'}}$ 
is  augmented so that the collection $\{ \hat{\lambda}_{\vnode }\}_{\vnode\in {\cal N}_{dof}}$ is a partition of 
unity. As in \cite{brown2016multiscale}, this is augmented because the Dirichlet condition makes the standard 
basis not a partition of unity near the boundary.
We denote the global truncated corrector operator as
\begin{align}\label{Qcorrectorbasis.global}
  Q_{k}(u_{H})=\sum_{\vnode \in {\cal N}_{dof}}Q_{\vnode,k}(u_{H}).
\end{align}
 With this  notation, we   write the truncated multiscale space as
\[
  {V}^{ms}_{H,k}=\text{span}\{u_{H}-Q_{k}(u_{H}) | u_{H}\in {V}_{H}  \}.
\]
Moreover, note also that for sufficiently large $k$, we recover the full domain and obtain 
the ideal corrector, denoted $Q_{\cilt }$, with functions of global support from \eqref{corrector}.
The corresponding multiscale approximation to \eqref{varform} is: find $ u_{H,k}^{ms}\in {V}^{ms}_{H,k}$ 
such that
\begin{align}\label{localmsvarform}
  \int_{\cilt}B(x) \nabla u_{H,k}^{ms} \nabla v\, y^a \,d\x=\int_{\Omega}c_s f(x) v(x,0) \,dx 
  \quad\text{for all } v\in {V}^{ms}_{H,k}.
\end{align}

\subsection{Error Analysis}
In this section we present the error introduced by using  \eqref{msvarform} on the global domain to compute 
the solution to \eqref{varform}. 
Then, we show how localization effects the error when we use  \eqref{localmsvarform} on truncated domains 
to compute the same solution.
We also show that, supposing more smoothness in the initial data, and augmenting the 
quasi-interpolation operator to have a global orthogonality condition on $\Omega$, 
that we may obtain a better rate of convergence.

\subsubsection{Error with Global Support}
\begin{theorem}\label{errorglobal}
Suppose that $u\in \Hdir{\cilt}$ satisfies \eqref{varform} and that $u_{H}^{ms}\in {V}_{H}^{ms}$ satisfies 
\eqref{msvarform}.  Suppose the data is such that $f\in L^2(\Omega)$. Then, we have the following error 
estimate
\begin{align}\label{eq:errorglobal}
  \TwoNorm{\nabla u-\nabla u_{H}^{ms}}{\cilt}\lesssim H^{s} \norm{f}_{L^2(\Omega)}.
\end{align}
\end{theorem}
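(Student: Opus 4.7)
The strategy is the standard LOD duality argument adapted to the weighted setting, where the boundary approximation estimate of Corollary~\ref{stablelemma.fractional.boundary} is what produces the $H^s$ rate.

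\medskip

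First, I would identify the error as a fine-scale function. By construction, $V_{H}^{ms}$ is the $B$-orthogonal complement of $V^f$ in $\Hdir{\cilt}$, so Galerkin orthogonality of $u_{H}^{ms}$ with respect to $V_{H}^{ms}$ places $e:=u-u_{H}^{ms}$ in $V^f$. In particular $\Qint e = 0$, and $B(u_H^{ms}, e) = 0$ by the orthogonal splitting.

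\medskip

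Second, I would test the error against itself. Using coercivity of $B$ (from the lower bound on $A$ together with the $y^a$-weighted setting), the variational formulation \eqref{varform}, and $B(u_H^{ms},e)=0$, I get
\begin{align*}
\alpha\,\TwoNorm{\nabla e}{\cilt}^{2}
\;\lesssim\; B(e,e) \;=\; B(u,e) - B(u_H^{ms},e) \;=\; F(e)
\;=\; c_s\!\int_{\Omega} f(x)\,e(x,0)\,dx,
\end{align*}
so by Cauchy-Schwarz,
\begin{align*}
\TwoNorm{\nabla e}{\cilt}^{2} \;\lesssim\; \norm{f}_{L^2(\Omega)}\,\norm{\trace{}{e}}_{L^{2}(\Omega)}.
\end{align*}

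\medskip

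Third, I would cash in the fact that $\Qint e = 0$ on the trace. Apply Corollary~\ref{stablelemma.fractional.boundary} to $e$ on each boundary patch $\partial\omega_{\vnode}$ for $\vnode\in \mathcal{N}_{\Omega}$, noting that $e - \Qint e = e$:
\begin{align*}
\norm{e}_{L^{2}(\partial\omega_{\vnode})}
\;=\;\norm{e-\Qint e}_{L^{2}(\partial\omega_{\vnode})}
\;\lesssim\; H^{s}\,\TwoNorm{\nabla e}{\omega_{\vnode,1}}.
\end{align*}
Summing over $\vnode\in\mathcal{N}_{\Omega}$ and using the finite overlap of the patches $\{\omega_{\vnode,1}\}$ (a consequence of shape-regularity and quasi-uniformity) yields
\begin{align*}
\norm{\trace{}{e}}_{L^{2}(\Omega)}^{2}
\;\lesssim\; H^{2s}\,\TwoNorm{\nabla e}{\cilt}^{2}.
\end{align*}

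\medskip

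Finally, inserting this into the previous bound gives
$\TwoNorm{\nabla e}{\cilt}^{2}\lesssim H^{s}\norm{f}_{L^{2}(\Omega)}\TwoNorm{\nabla e}{\cilt}$, and dividing yields \eqref{eq:errorglobal}. The main substantive step is the trace estimate via the corollary, which is precisely why the hybrid quasi-interpolation was built with a $d$-dimensional projection on $\Omega$; everything else is the usual LOD orthogonality skeleton.
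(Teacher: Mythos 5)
Your proposal is correct and follows essentially the same route as the paper's proof: identify $u-u_H^{ms}$ as the fine-scale component $u^f\in V^f$ with $\Qint u^f=0$, test with $u^f$ itself, and convert the trace term via Corollary~\ref{stablelemma.fractional.boundary} to extract the $H^s$ factor. Your explicit summation of the local boundary estimates over the patches $\{\omega_{\vnode,1}\}$ using finite overlap is a detail the paper leaves implicit, but the argument is the same.
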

\begin{proof}
We  utilize the local stability property of $\Qint$ from Lemma \ref{stablelemma}, and in particular the trace 
estimate of Corollary \ref{stablelemma.fractional.boundary}. From the orthogonal splitting of the spaces it is 
clear that $ u- u_{H}^{ms}=u^f \in {V}^f$ and $\Qint(u^f)=0$. Thus,  utilizing Galerkin orthogonality,  taking the 
test function in the variational form to be $v=u^f=u-u_H^{ms}$ we have
\begin{align*}
  \TwoNorm{\nabla u-\nabla u_{H}^{ms}}{\cilt}^2
  &\lesssim \int_{\cilt}B(x) |\nabla u^f|^2\, y^a \,d\x=\int_{\Omega }c_{s} f(x) (u^f-\Qint(u^f)) \,dx\\
  &\lesssim \norm{f}_{L^2(\Omega)}\norm{ u^f-\Qint(u^f)}_{L^2(\Omega)} 
  \lesssim  H^{s}  \norm{f}_{L^2(\Omega)}\TwoNorm{\nabla u^f}{\cilt},
\end{align*}
where we used the approximation property \eqref{stableproj1.fractional.boundary}.
Dividing the last $\TwoNorm{\nabla u^f}{\cilt}$ term yields the result.
\end{proof}
 \begin{remark}
Note that we obtain the expected convergence rate of $H^s$ for the 
fractional Laplacian type problems on quasi-uniform meshes.
Further,  we do not need to utilize second order derivatives of $u$ as in 
the analysis of \cite[Section 5]{nochetto2015pde}. In that setting, the term $\norm{u_{yy}}_{L^2(\cilt,y^\beta) }$ 
yielded a convergence rate of $C_{\eps} H^{s-\eps},$ for all $\eps>0$, with the constant blowing up as 
$\eps \to 0$. 
However, the sub-grid fine $h$ standard finite elements may suffer from these effects. Here we focus 
merely on the error accumulated from the coarse-grid.
\end{remark}

\subsubsection{Error with Localization}\label{localizationsections}
In this  section,  we discuss the error due to the truncation of the corrector problems
to patches of $k$ layers.
The key lemma needed is the following lemma, that gives
the decay in the error as the  truncated corrector approaches the ideal corrector of global 
support in the weighted Sobolev norm.
\begin{lemma}\label{localglobal.derp}
	Let $u_{H}\in {V}_{H}$, let $Q_{k}$ be constructed from \eqref{Qcorrector} and 
\eqref{Qcorrectorbasis.global}, and $Q_{\cilt}$ defined to be the ideal corrector without truncation in 
\eqref{corrector}, then for some $\theta\in (0,1)$
\begin{align}
  \TwoNorm{\nabla( Q_{\cilt}(u_{H})-Q_{k}(u_{H}))  }{\cilt}
  \lesssim    k^{\frac{d}{2}}  \theta^{k}  \TwoNorm{ \nabla  u_{H}  }{\cilt}.
\end{align}
\end{lemma}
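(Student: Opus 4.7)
The plan is to follow the now-standard LOD localization argument, adapted to the weighted Sobolev setting and to the two-piece quasi-interpolation operator $\Qint$. First, I would use the partition of unity $\sum_{\vnode\in {\cal N}_{dof}} \hat{\lambda}_{\vnode}=1$ together with linearity to write
\[
  Q_{\cilt}(u_{H})=\sum_{\vnode\in {\cal N}_{dof}}Q_{\vnode}(u_{H}),
\]
where $Q_{\vnode}(u_{H})\in {V}^f$ is the global (untruncated) nodal corrector with right-hand side $\int_{\omega_{\vnode}}B\hat{\lambda}_{\vnode}\nabla u_{H}\cdot\nabla w\,y^a\,d\x$. Then the global truncation error decomposes as $Q_{\cilt}(u_{H})-Q_{k}(u_{H})=\sum_{\vnode}(Q_{\vnode}(u_{H})-Q_{\vnode,k}(u_{H}))$, which I would control node-by-node and then sum.

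The core step is to prove exponential decay of the nodal corrector, i.e., for some $\theta\in(0,1)$ independent of $\vnode$ and of $k$,
\[
  \TwoNorm{\nabla Q_{\vnode}(u_{H})}{\cilt\setminus \omega_{\vnode,k}}\lesssim \theta^{k}\TwoNorm{\nabla u_{H}}{\omega_{\vnode}}.
\]
To do this, I would choose a cutoff $\eta\in C^\infty$ that equals $0$ on $\omega_{\vnode,k-1}$ and $1$ outside $\omega_{\vnode,k}$, with $|\nabla\eta|\lesssim H^{-1}$, and test the defining equation of $Q_{\vnode}(u_{H})$ against $w:=\eta^{2}Q_{\vnode}(u_{H})-\Qint(\eta^{2}Q_{\vnode}(u_{H}))\in V^f$. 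Since the right-hand side is supported on $\omega_{\vnode}\subset\omega_{\vnode,k-1}$ where $\eta$ vanishes, and since $\Qint$ is stable and approximates well in weighted norms (Lemma~\ref{stablelemma}) and on the trace (Corollary~\ref{stablelemma.fractional.boundary}), the energy splits into a coercive bulk term plus commutator terms of the form $\int y^a B\nabla Q_{\vnode}(u_{H})\cdot(\nabla\eta)\,(\cdots)$ that are localized to the annulus $\omega_{\vnode,k}\setminus\omega_{\vnode,k-1}$. Invoking the weighted Caccioppoli inequality (Lemma~\ref{Caccioppoli}) and the weighted Poincar\'e inequality (Lemma~\ref{poincare2}) on that annulus absorbs the derivative term and yields an annular bound that can be iterated across the $k$ layers to give geometric decay.

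Next, I would convert the decay of $Q_{\vnode}(u_{H})$ into a bound on the truncation error $Q_{\vnode}(u_{H})-Q_{\vnode,k}(u_{H})$. Since $Q_{\vnode,k}(u_{H})$ is the $B$-orthogonal projection of $Q_{\vnode}(u_{H})$ onto ${V}^f(\omega_{\vnode,k})$ (both solve the same variational problem, one on $\cilt$ and one on $\omega_{\vnode,k}$), C\'ea's lemma gives
\[
  \TwoNorm{\nabla(Q_{\vnode}(u_{H})-Q_{\vnode,k}(u_{H}))}{\cilt}\lesssim \inf_{w\in {V}^f(\omega_{\vnode,k})}\TwoNorm{\nabla(Q_{\vnode}(u_{H})-w)}{\cilt}.
\]
Choosing $w=\tilde{\eta}\,Q_{\vnode}(u_{H})-\Qint(\tilde{\eta}\,Q_{\vnode}(u_{H}))$ with $\tilde{\eta}$ a cutoff equal to $1$ on $\omega_{\vnode,k-1}$ and vanishing outside $\omega_{\vnode,k}$, together with the local stability of $\Qint$, bounds this infimum by the annular energy of $Q_{\vnode}(u_{H})$, which by the decay step is $\lesssim \theta^{k}\TwoNorm{\nabla u_{H}}{\omega_{\vnode}}$.

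Finally, I would sum over $\vnode\in{\cal N}_{dof}$ using Cauchy-Schwarz and finite overlap: the supports $\omega_{\vnode,k}$ overlap at most $\mathcal{O}(k^{d})$ times, hence
\[
  \TwoNorm{\nabla(Q_{\cilt}(u_{H})-Q_{k}(u_{H}))}{\cilt}^{2}
  \lesssim k^{d}\sum_{\vnode}\TwoNorm{\nabla(Q_{\vnode}(u_{H})-Q_{\vnode,k}(u_{H}))}{\cilt}^{2}
  \lesssim k^{d}\theta^{2k}\sum_{\vnode}\TwoNorm{\nabla u_{H}}{\omega_{\vnode}}^{2}\lesssim k^{d}\theta^{2k}\TwoNorm{\nabla u_{H}}{\cilt}^{2},
\]
and taking square roots yields the claimed $k^{d/2}\theta^{k}$ rate. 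The main obstacle I expect is the iterative annulus estimate underlying exponential decay: the cutoff argument must respect both the $d+1$-dimensional weighted bulk projection and the $d$-dimensional $L^2$ projection on $\Omega$ in the definition of $\Qint$, so the commutator $[\Qint,\eta]$ has to be controlled uniformly in the weighted norm and on the trace simultaneously, which is exactly what Lemma~\ref{stablelemma} and Corollary~\ref{stablelemma.fractional.boundary} are designed to provide.
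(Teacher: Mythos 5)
Your overall architecture coincides with the paper's: decompose the corrector node-by-node, prove exponential decay of each global nodal corrector (the paper's Lemma~\ref{decaylemma}, obtained via cutoffs, the quasi-invariance of $V^f$ under multiplication by cutoffs corrected by $\Qint$ from Lemma~\ref{qi}, a Caccioppoli-type estimate, and iteration over annuli), convert decay into a truncation bound by C\'ea's lemma with the test function $\tilde\eta\, Q_{\vnode,\cilt}(u_H)-\Qint(\tilde\eta\, Q_{\vnode,\cilt}(u_H))$, and finally sum over nodes with a $k^{d}$ overlap count. All of these steps are exactly what Appendix~\ref{truncproofsection} does.

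The one step that does not hold as written is the final summation. You claim
$\TwoNorm{\nabla \sum_{\vnode} e_{\vnode}}{\cilt}^2\lesssim k^{d}\sum_{\vnode}\TwoNorm{\nabla e_{\vnode}}{\cilt}^2$ with $e_{\vnode}=Q_{\vnode,\cilt}(u_H)-Q_{\vnode,k}(u_H)$, justified by the $\mathcal{O}(k^{d})$ overlap of the patches $\omega_{\vnode,k}$. But $e_{\vnode}$ is \emph{not} supported in $\omega_{\vnode,k}$: the global nodal corrector $Q_{\vnode,\cilt}(u_H)$ has support in all of $\cilt$, so a finite-overlap argument on the supports of the summands is not available. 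The paper circumvents this by testing the energy of $v=\sum_{\vnode}e_{\vnode}$ against $v(1-\eta_{\vnode}^{k,1})$ and $v\eta_{\vnode}^{k,1}-\Qint(v\eta_{\vnode}^{k,1})$ separately for each node: the second contribution is controlled by the local Galerkin orthogonality together with Lemma~\ref{qi}, and the first localizes onto $\omega_{\vnode,k+1}$, yielding
$\TwoNorm{\nabla v}{\cilt}^2\lesssim\sum_{\vnode}\TwoNorm{\nabla e_{\vnode}}{\cilt}\,\TwoNorm{\nabla v}{\omega_{\vnode,k+1}}$;
the $k^{d/2}$ then comes from Cauchy--Schwarz and the bounded overlap of the enlarged patches applied to the $v$-norms, not to the $e_{\vnode}$-norms. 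Replace your finite-overlap step with this duality/cutoff argument (or an equivalent device); the remainder of your outline is sound and matches the paper.
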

\begin{proof}
See Appendix \ref{truncproofsection}.
\end{proof}
\begin{theorem}\label{errorlocal}
Suppose that $u\in \Hdir{\cilt}$ satisfies \eqref{varform} and that $u_{H,k}^{ms}\in {V}_{H,k}^{ms}$, with 
local correctors  calculated from \eqref{Qcorrector},
satisfies \eqref{localmsvarform}. 
Suppose $f\in L^2(\Omega).$ Then, we have the following error estimate for some $\theta \in (0,1)$
\begin{align}\label{eq:errorlocal1}
  \TwoNorm{\nabla u-\nabla u_{H,k}^{ms}}{\cilt}
  \lesssim \left(  H^{s}+ k^{\frac{d}{2}}  \theta^{k}\right)\norm{f}_{L^2(\Omega)}.
\end{align}
\end{theorem}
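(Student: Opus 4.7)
The plan is to combine the global error bound from Theorem \ref{errorglobal} with the corrector decay from Lemma \ref{localglobal.derp} via a triangle inequality and a best-approximation (C\'ea) argument.

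First I would split the error using the ideal multiscale approximation $u_H^{ms}\in V_H^{ms}$ from \eqref{msvarform} as an intermediate:
\begin{equation*}
\TwoNorm{\nabla u-\nabla u_{H,k}^{ms}}{\cilt}\leq \TwoNorm{\nabla u-\nabla u_{H}^{ms}}{\cilt}+\TwoNorm{\nabla u_H^{ms}-\nabla u_{H,k}^{ms}}{\cilt}.
\end{equation*}
The first term is already controlled by $H^{s}\norm{f}_{L^2(\Omega)}$ via Theorem \ref{errorglobal}, so the entire remaining task is to bound the second term by $k^{d/2}\theta^{k}\norm{f}_{L^2(\Omega)}$.

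For the second term I would exploit symmetry and coercivity of $B(\cdot,\cdot)$ on $\Hdir{\cilt}$: since both $u_H^{ms}$ and $u_{H,k}^{ms}$ are Galerkin approximations of $u$ on $V_H^{ms}$ and $V_{H,k}^{ms}$ respectively, a standard C\'ea-type estimate gives
\begin{equation*}
\TwoNorm{\nabla u_H^{ms}-\nabla u_{H,k}^{ms}}{\cilt}\lesssim \inf_{w\in V_{H,k}^{ms}}\TwoNorm{\nabla(u_H^{ms}-w)}{\cilt}.
\end{equation*}
Writing $u_H^{ms}=v_H-Q_{\cilt}(v_H)$ with $v_H=\Qint(u_H^{ms})\in V_H$ (this identity uses that $Q_{\cilt}(v_H)\in V^f$ lies in the kernel of $\Qint$ and that $\Qint$ is a projection, both from Lemma \ref{stablelemma}), I would pick the candidate $w=v_H-Q_k(v_H)\in V_{H,k}^{ms}$, so that
\begin{equation*}
\TwoNorm{\nabla(u_H^{ms}-w)}{\cilt}=\TwoNorm{\nabla(Q_{\cilt}(v_H)-Q_k(v_H))}{\cilt}\lesssim k^{\frac{d}{2}}\theta^{k}\TwoNorm{\nabla v_H}{\cilt}
\end{equation*}
by Lemma \ref{localglobal.derp}.

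It then remains to absorb $\TwoNorm{\nabla v_H}{\cilt}$ into $\norm{f}_{L^2(\Omega)}$. By the $H^1$-stability of $\Qint$ (summed over patches, using the finite overlap of the $\omega_{\vnode,1}$) I would estimate $\TwoNorm{\nabla v_H}{\cilt}=\TwoNorm{\nabla \Qint(u_H^{ms})}{\cilt}\lesssim \TwoNorm{\nabla u_H^{ms}}{\cilt}$, and then use the triangle inequality together with Theorem \ref{errorglobal} to dominate this by $\TwoNorm{\nabla u}{\cilt}$. Finally the a priori bound on $u$, obtained by testing \eqref{varform} with $u$ itself, combining Cauchy--Schwarz on $\int_\Omega f\,u(\cdot,0)\,dx$ with the trace Lemma \ref{CanonicalTrace} (and the weighted Poincar\'e inequality on $\cilt$ to handle the full $H^1$-norm), yields $\TwoNorm{\nabla u}{\cilt}\lesssim \norm{f}_{L^2(\Omega)}$. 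Assembling these bounds gives \eqref{eq:errorlocal1}.

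The main obstacle I anticipate is not any single inequality but the bookkeeping around the C\'ea step: one must verify that the chosen candidate $v_H-Q_k(v_H)$ really lies in $V_{H,k}^{ms}$ (which follows from the definition in \eqref{Qcorrectorbasis.global}) and that the intermediate quantity $\TwoNorm{\nabla v_H}{\cilt}$ is controlled uniformly in $H$ and $k$ by the data. The $H^1$-stability of $\Qint$ in the weighted norm is what makes this step work cleanly and is the reason the projective construction of Section \ref{quasi_int_section} was needed in the first place.
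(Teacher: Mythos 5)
Your proposal follows essentially the same route as the paper: a triangle inequality through the ideal solution $u_H^{ms}$, a best-approximation argument in the energy norm with the candidate $v_H-Q_k(v_H)$, Lemma \ref{localglobal.derp} applied to the corrector difference $Q_{\cilt}(v_H)-Q_k(v_H)$, and the $H^1$-stability of $\Qint$ plus an a priori bound to absorb $\TwoNorm{\nabla v_H}{\cilt}$ into $\norm{f}_{L^2(\Omega)}$; the paper merely applies minimality of the Galerkin error directly to $\TwoNorm{\nabla(u- u_{H,k}^{ms})}{\cilt}$ against $u_H-Q_k(u_H)$ and inserts $\pm Q_{\cilt}(u_H)$, and it bounds $\TwoNorm{\nabla u_H^{ms}}{\cilt}\lesssim\norm{f}_{L^2(\Omega)}$ by testing \eqref{msvarform} rather than detouring through $\TwoNorm{\nabla u}{\cilt}$. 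One step needs a small repair: the inequality $\TwoNorm{\nabla u_H^{ms}-\nabla u_{H,k}^{ms}}{\cilt}\lesssim\inf_{w\in V_{H,k}^{ms}}\TwoNorm{\nabla(u_H^{ms}-w)}{\cilt}$ is not a standard C\'ea estimate, since $V_{H,k}^{ms}$ is not a subspace of $V_H^{ms}$ (the truncated correctors differ from the ideal ones), so $u_{H,k}^{ms}$ is not the Galerkin projection of $u_H^{ms}$. The correct chain is $\TwoNorm{\nabla u_H^{ms}-\nabla u_{H,k}^{ms}}{\cilt}\leq 2\TwoNorm{\nabla u-\nabla u_H^{ms}}{\cilt}+\TwoNorm{\nabla(u_H^{ms}-w)}{\cilt}$ for any $w\in V_{H,k}^{ms}$, using that $u_{H,k}^{ms}$ is the energy-best approximation to $u$ from $V_{H,k}^{ms}$; the extra term is again $\lesssim H^{s}\norm{f}_{L^2(\Omega)}$ by Theorem \ref{errorglobal}, so the final estimate \eqref{eq:errorlocal1} is unaffected.
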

\begin{proof}
We let $u^{ms}_{H}=u_{H}-Q_{\cilt}(u_{H})$ be the ideal global multiscale solution 
satisfying \eqref{msvarform}, and $u^{ms}_{H,k}=u_{H,k}-Q_{k}(u_{H,k})$ 
be the corresponding truncated solution to \eqref{localmsvarform}. 
Then, by Galerkin approximations being minimal in energy norm we have
\begin{align*}
  \TwoNorm{\nabla u-\nabla (u_{H,k}-Q_{k}(u_{H,k}))}{\cilt}
  \lesssim \TwoNorm{\nabla u-\nabla( u_{H}-Q_{k}(u_{H}) )}{\cilt}.
\end{align*}
Using this fact and Theorem \ref{errorglobal} and Lemma \ref{localglobal.derp} we have
\begin{align*}
  \TwoNorm{\nabla u-\nabla u_{H,k}^{ms}}{\cilt}
  &\leq\TwoNorm{\nabla u-\nabla( u_{H}-Q_{\cilt}(u_{H})+Q_{\cilt}(u_{H})-Q_{k}(u_H))}{\cilt} \\
  &\leq\TwoNorm{\nabla u-\nabla u_{H}^{ms}}{\cilt}+\TwoNorm{\nabla( Q_{\cilt}(u_{H})-Q_{k}(u_{H}))  }{\cilt}\\
  &\lesssim H^{s}\norm{f}_{L^2(\Omega)}+ k^{\frac{d}{2}}  \theta^{k} \TwoNorm{ \nabla u_{H}  }{\cilt}.
\end{align*}
In addition note that, by construction, 
$\Qint(u_{H}^{ms})=\Qint(u_{}^{})$.
Thus, using  local stability \eqref{stableproj0} and \emph{a-priori} bounds from \eqref{msvarform}, 
obtained via the trace inequality in Lemma \ref{CanonicalTrace}, we have
\begin{align*}
  \TwoNorm{ \nabla u^{}_{H} }{\cilt} 
  &\lesssim\TwoNorm{ \nabla \Qint(u_{H}^{ms}) }{\cilt}
  \lesssim \TwoNorm{ \nabla u_{H}^{ms} }{\cilt}
  \lesssim\norm{f}_{L^2(\Omega)}. \qedhere
\end{align*}
\end{proof}

\subsubsection{Error with $L^2$ Projection on $\Omega$}
By augmenting our quasi-interpolation \eqref{proj} on the boundary $\Omega$ we may obtain a better order of 
convergence given sufficiently smooth data $f$. We instead define the quasi-interpolation 
\begin{align}\label{proj.L2.omega}
  \Qint^{L^2} u(\x)=\sum_{\vnode\in{\cal N}_{int}} ({\cal P}_{\vnode} u)(\vnode) \lambda_{\vnode}(\x)+
  \sum_{\vnode\in{\cal N}_{\Omega}} (\Pi_{\Omega}^{L^2} u)(\vnode)\lambda_{\vnode}(\x),
\end{align}
where $\Pi_{\Omega}^{L^2} :L^2(\Omega)\to V_{H}|_{\Omega}$ is the (global on $\Omega$) $L^2$ projection
\begin{align*}
  \int_{\Omega}(\Pi_{\Omega}^{L^2}u) v_{H}\,  dx 
  =\int_{\Omega} u v_{H}\,  dx 
  \quad\text{for all } v_{H}\in V_{H}|_{\Omega}.
\end{align*}
From this we see that by construction for $f_{H}\in  V_{H}|_{\Omega}$ we have 
\begin{align}\label{orthogonalL2}
  \int_{\Omega} f_{H} \tilde{v} \,dx=0 
  \quad\text{for} \quad
  \tilde{v}\in \text{ker}\left( \Qint^{L^2}\right).
\end{align}
\begin{remark}
We suppose $\Qint^{L^2}$ given by \eqref{proj.L2.omega} satisfies the stability relations  in Lemma 
\ref{stablelemma} and Corollary \ref{stablelemma.fractional.boundary} as  similar arguments provided in those 
proofs will hold.
\end{remark}
\begin{theorem}\label{errorglobal.L2}
Suppose that $u\in \Hdir{\cilt}$ satisfies \eqref{varform} and that $u_{H}^{ms}\in {V}_{H}^{ms}$ satisfies 
\eqref{msvarform}, where the spaces are constructed using $\Qint^{L^2}$ from \eqref{proj.L2.omega}. We 
suppose  the additional  regularity $f\in \mathbb{H}^{1-s}$.
Then, we have the following error estimate
\begin{align}\label{eq:errorglobal.L2}
  \TwoNorm{\nabla u-\nabla u_{H}^{ms}}{\cilt}
  \lesssim H^{s} \inf_{f_{H}\in V_{H}|_{\Omega}}\left(\norm{f-f_{H}}_{L^2(\Omega)} \right)
  \lesssim  H^{}\norm{f}_{\mathbb{H}^{1-s}(\Omega)}.
\end{align}
\end{theorem}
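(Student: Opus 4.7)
The plan is to mimic the proof of Theorem \ref{errorglobal} but exploit the extra orthogonality \eqref{orthogonalL2} to gain a factor of $H^{1-s}$. Write $u^f=u-u_H^{ms}\in V^f=\ker(\Qint^{L^2})$ and observe that $u_H^{ms}\in V_H^{ms}$ is $B$-orthogonal to $V^f$ by construction. Testing \eqref{varform} with $v=u^f$ and using Galerkin orthogonality as before yields
\begin{equation*}
\TwoNorm{\nabla u^f}{\cilt}^2 \;\lesssim\; B(u^f,u^f) \;=\; c_s \int_\Omega f(x)\, u^f(x,0)\,dx.
\end{equation*}

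The new ingredient is to insert an arbitrary $f_H\in V_H|_\Omega$. Since $u^f\in\ker(\Qint^{L^2})$, the orthogonality \eqref{orthogonalL2} gives $\int_\Omega f_H\, u^f(x,0)\,dx=0$, so
\begin{equation*}
\TwoNorm{\nabla u^f}{\cilt}^2 \;\lesssim\; \norm{f-f_H}_{L^2(\Omega)}\,\norm{u^f(\cdot,0)}_{L^2(\Omega)}.
\end{equation*}
Now I would invoke the boundary approximability inherited by $\Qint^{L^2}$ from Corollary \ref{stablelemma.fractional.boundary} (the authors explicitly assert this in the remark following \eqref{orthogonalL2}). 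Summing the local estimate $\norm{u^f}_{L^2(\partial\omega_\vnode)}\lesssim H^s\TwoNorm{\nabla u^f}{\omega_{\vnode,1}}$ over $\vnode\in\mathcal{N}_\Omega$ and using finite overlap of the patches $\omega_{\vnode,1}$ produces
\begin{equation*}
\norm{u^f(\cdot,0)}_{L^2(\Omega)} \;\lesssim\; H^s\,\TwoNorm{\nabla u^f}{\cilt}.
\end{equation*}
Substituting and dividing by $\TwoNorm{\nabla u^f}{\cilt}$ gives the first inequality
\begin{equation*}
\TwoNorm{\nabla u^f}{\cilt} \;\lesssim\; H^s \inf_{f_H\in V_H|_\Omega}\norm{f-f_H}_{L^2(\Omega)}.
\end{equation*}

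For the second inequality, I would take $f_H$ to be a quasi-interpolant of $f$ on $\mathcal{T}_\Omega$ (e.g.\ Scott--Zhang) and invoke the standard approximation estimate $\inf_{f_H\in V_H|_\Omega}\norm{f-f_H}_{L^2(\Omega)} \lesssim H^{1-s}\norm{f}_{\mathbb{H}^{1-s}(\Omega)}$, which follows by interpolating between the $H^1\to L^2$ estimate of order $H$ and the trivial $L^2\to L^2$ estimate of order $1$, invoking the characterization of $\mathbb{H}^{1-s}(\Omega)$ from Section \ref{fractionalspacesection}. Multiplying by $H^s$ recovers the desired rate $H\norm{f}_{\mathbb{H}^{1-s}(\Omega)}$.

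The only delicate point is the first step: verifying that Corollary \ref{stablelemma.fractional.boundary} indeed carries over to $\Qint^{L^2}$ with the same $H^s$ boundary approximation constant. This is plausible because the global $L^2$ projection on $\Omega$ is even more regularizing than the local one used in \eqref{proj}, but one must re-examine the $L^\infty$-to-$L^2$ step on $\Omega$ for $\Pi_\Omega^{L^2}$, which is not strictly local; however, a standard Bernstein-type argument combined with the $L^2(\Omega)$-stability of $\Pi_\Omega^{L^2}$ and the trace Lemma \ref{traceHlemma} should suffice, as the authors indicate in the remark preceding the statement.
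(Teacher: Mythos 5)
Your proposal is correct and follows essentially the same route as the paper's proof: Galerkin orthogonality with the test function $u^f=u-u_H^{ms}$, insertion of an arbitrary $f_H$ via the orthogonality \eqref{orthogonalL2}, the $H^s$ boundary approximation estimate of Corollary \ref{stablelemma.fractional.boundary} (which the paper, like you, assumes carries over to $\Qint^{L^2}$ by the remark preceding the theorem), and the standard interpolation estimate $\inf_{f_H}\norm{f-f_H}_{L^2(\Omega)}\lesssim H^{1-s}\norm{f}_{\mathbb{H}^{1-s}(\Omega)}$. The only cosmetic difference is that you sum the local boundary estimates explicitly and discuss the transfer of the corollary to $\Qint^{L^2}$ in more detail than the paper does.
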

\begin{proof}
We  again utilize the local stability property of $\Qint$ from Lemma \ref{stablelemma}, and in particular 
the trace estimate of Corollary \ref{stablelemma.fractional.boundary}.
Thus,  utilizing Galerkin orthogonality, the orthogonality relation \eqref{orthogonalL2},
and taking the test function in the variational form to be 
$v=u^f=u-u_H^{ms}$, we arrive at
\begin{align*}
   \TwoNorm{\nabla u-\nabla u_{H}^{ms}}{\cilt}^2
   &\lesssim \int_{\cilt}B(x) |\nabla u^f|^2\, y^a \,d\x
   =\int_{\Omega }c_{s} f \, (u^f-\Qint^{L^2}(u^f)) \,dx\\
   &=\int_{\Omega }c_{s} (f-f_{H}) (u^f-\Qint^{L^2}(u^f)) \,dx
   \lesssim \norm{f-f_H}_{L^2(\Omega)}\norm{ u^f-\Qint^{L^2}(u^f)}_{L^2(\Omega)} \\
   &\lesssim H^{s}  \norm{f-f_H}_{L^2(\Omega)}\TwoNorm{\nabla u^f}{\cilt}.
\end{align*}
Dividing the last $\TwoNorm{\nabla u^f}{\cilt}$ and using the standard interpolation estimate
\[
  \inf_{f_{H}\in V_{H}|_{\Omega}}\left(\norm{f-f_{H}}_{L^2(\Omega)} \right)\lesssim H^{1-s}\norm{f}_{\mathbb{H}^{1-s}(\Omega)},
\]
yields the result.
\end{proof}
\begin{remark}\label{remark:5.7}
The use of the global $L^2$ projection on $\Omega$ in the construction of the method does 
\emph{not} require  global-on-$\Omega$ computation. In fact, the quasi-interpolation operator 
\eqref{proj.L2.omega} does not need 
to be computed at all. The method solely requires the characterization of its kernel which can be realized via 
local functional constraints associated with coarse nodes, i.e.,
${\cal P}_{\vnode}\left( \cdot\right)(\vnode) =0$,
for all interior coarse nodes ${\cal N}_{int}$, and $(\cdot,\lambda_\vnode)_{\Omega}=0$
for nodes ${\cal N}_{\Omega}$, on $\Omega$.
\end{remark}
\begin{remark}
A similar truncation argument from Section \ref{localizationsections}, can be shown to also hold in this setting.
\end{remark}

\section{Numerical Examples}\label{numerics}
In this section we present some  numerical examples 
for $\Omega\subset \mathbb{R}^2$ or  ${\cal C} \subset \mathbb{R}^2 \times \mathbb{R}_{+}$, to illustrate the 
convergence behavior
of the multiscale method.
In particular, we observe higher order 
convergence for a simple generic analytic example
even for local boundary projections onto $\Omega$, using ${\cal P}_{\vnode}^\Omega$,
as indicated by Remark~\ref{remark:5.7}.
However, we demonstrate that with an heterogeneous coefficient this is not the case.
We will compare the multiscale approximation $u_H^{ms}$ to a fine-scale approximation $u_h$,
by replacing $u$ by $u_h$ in the theoretical results.
For $s<0.5$, we truncate  the domain in the extension direction at $T=1$, and for 
$s>0.5$ at $T=1.5$. 
These truncation lengths have been empirically found to be sufficient for the fine grid approximations.
For numerical efficiency we truncate the computations of the correctors to a local element patch
of size $k=2$ from the truncation estimate  Lemma \ref{localglobal.derp}.
In all experiments we use linear Lagrange finite elements. We will give two examples, 
one with a homogeneous and one 
with a heterogeneous coefficient and test $s$-values above and below the critical $s=0.5$ value. 
\subsection{Analytic example}
\begin{figure}
\centering
\includegraphics[width = 0.5\textwidth]{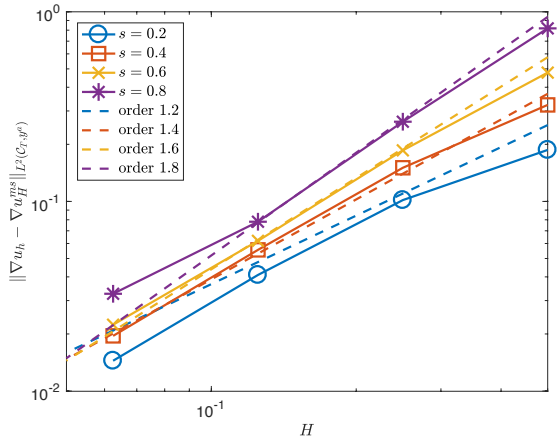}
\caption{Convergence for the analytic example.}
\label{fig:ex1:convergence}
\end{figure}
We take the analytic example from \cite[Section 6.1]{nochetto2015pde} 
with $\Omega=(0,1)^2$ and so $\cilt= (0,1)^2 \times (0,T)$ with the  forcing  
\[
f(x_1,x_2) = (2\pi^2)^{s}\sin(\pi x_1)\sin(\pi x_2).
\]
The exact solution on $\Omega$ is then given by
$u(x_1,x_2) = \sin(\pi x_1)\sin(\pi x_2)$ and the exact solution on the extended domain $\mathcal{C}_T$ by
\[
	u(x_1,x_2,y) = \frac{2^{1-s}}{\Gamma(s)} (2\pi^2)^{s/2} \sin(\pi x_1)\sin(\pi x_2) y^s K_s(\sqrt{2}\pi y),
\]
where $K_s$ denotes the modified Bessel function of the second kind.
\par
Note that $f$ is smooth in this example, hence the estimate in Theorem~\ref{errorglobal.L2}
can be improved to
\[
\| \nabla u_h - \nabla u_H^{ms}\|_{L^2(\mathcal{C}_T, y^a)} \lesssim H^{1+s}\norm{f}_{H^1(\Omega)}.
\]
Here, Figure~\ref{fig:ex1:convergence} shows the convergence of the error
$\| \nabla u_h - \nabla u_H^{ms}\|_{L^2(\mathcal{C}_T, y^a)}$ for $H=2^{-1},\ldots,2^{-4}$ and
$h=2^{-6}$. As predicted by the theory, we observe numerical convergence 
close to $\mathcal{O}(H^{1+s})$ for $s=0.2,0.4,0.6$ and $s=0.8$.
Note that in this particular example we get improved convergence rates despite the fact
that we used local projections ${\cal P}_{\vnode}^{\Omega}$. This indicates that the sum of the local 
projections are close to the
$\Omega$-global $L^2$ projection in this simple example.
\subsection{Heterogeneous example}
\begin{figure}
\centering
\subfloat[][]{
\includegraphics[width = 0.5\textwidth]{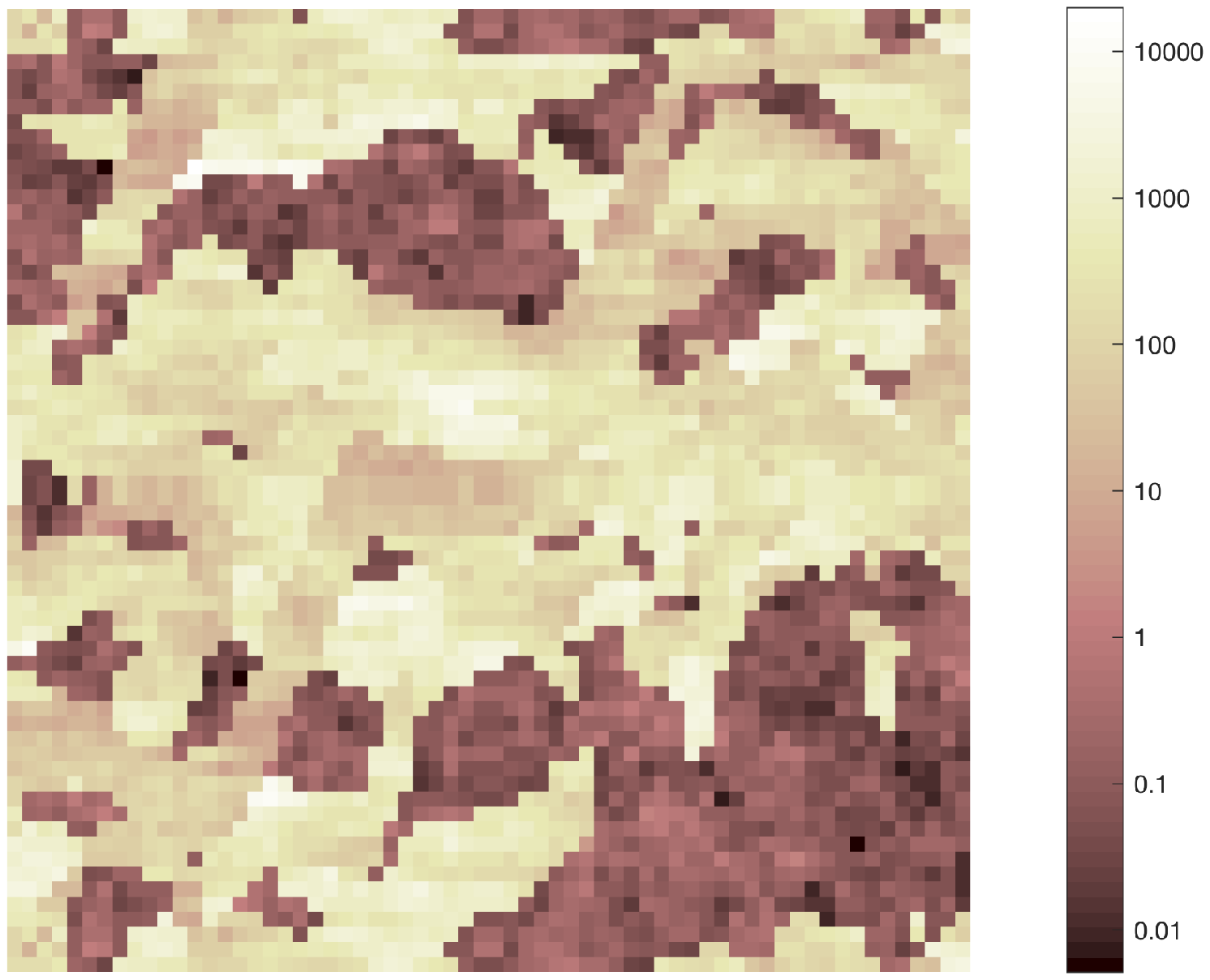}
\label{fig:permeability}
}

\subfloat[][]{
\includegraphics[width = 0.5\textwidth]{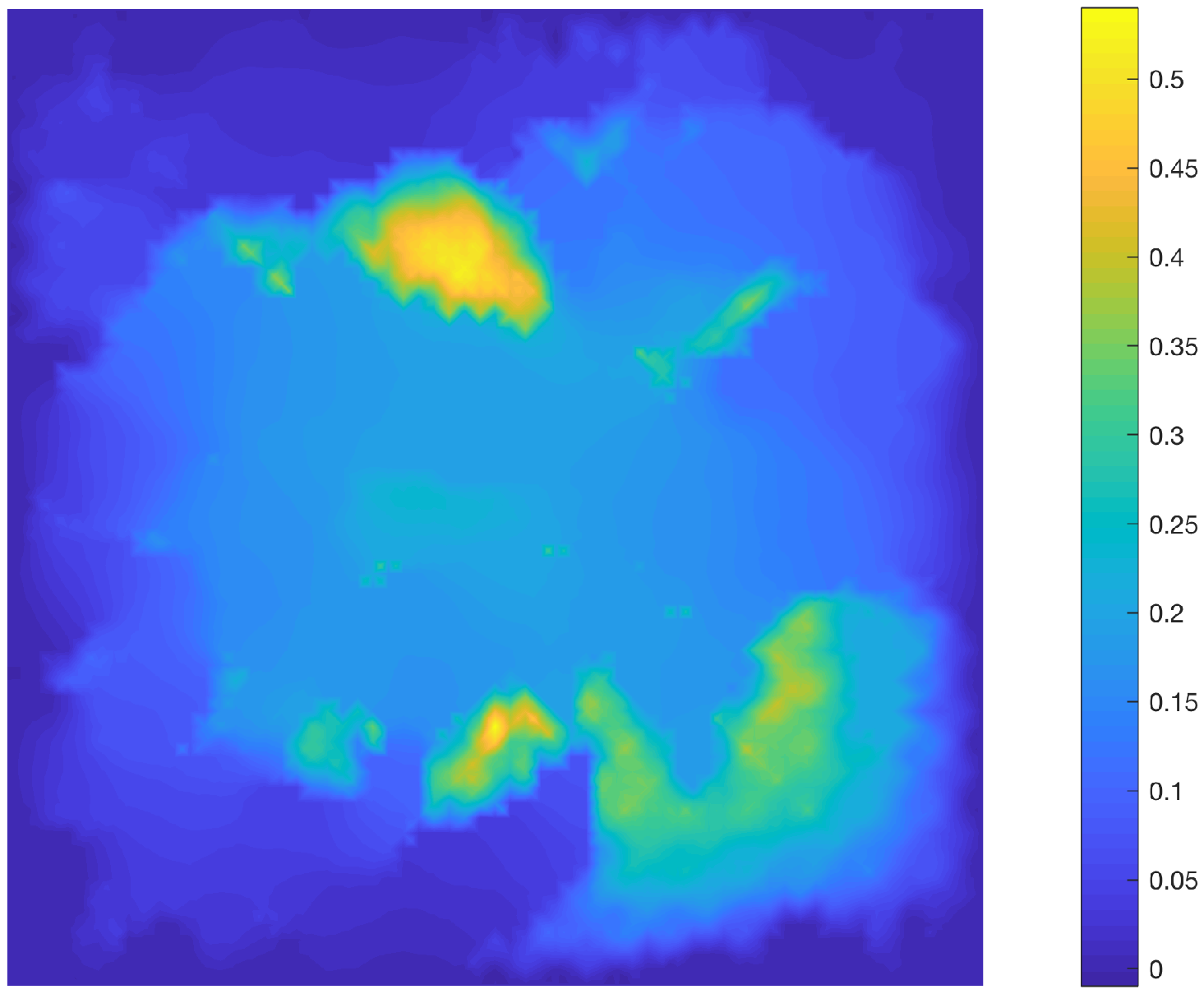}
\label{fig:speSolution}
}
\subfloat[][]{
\includegraphics[width = 0.5\textwidth]{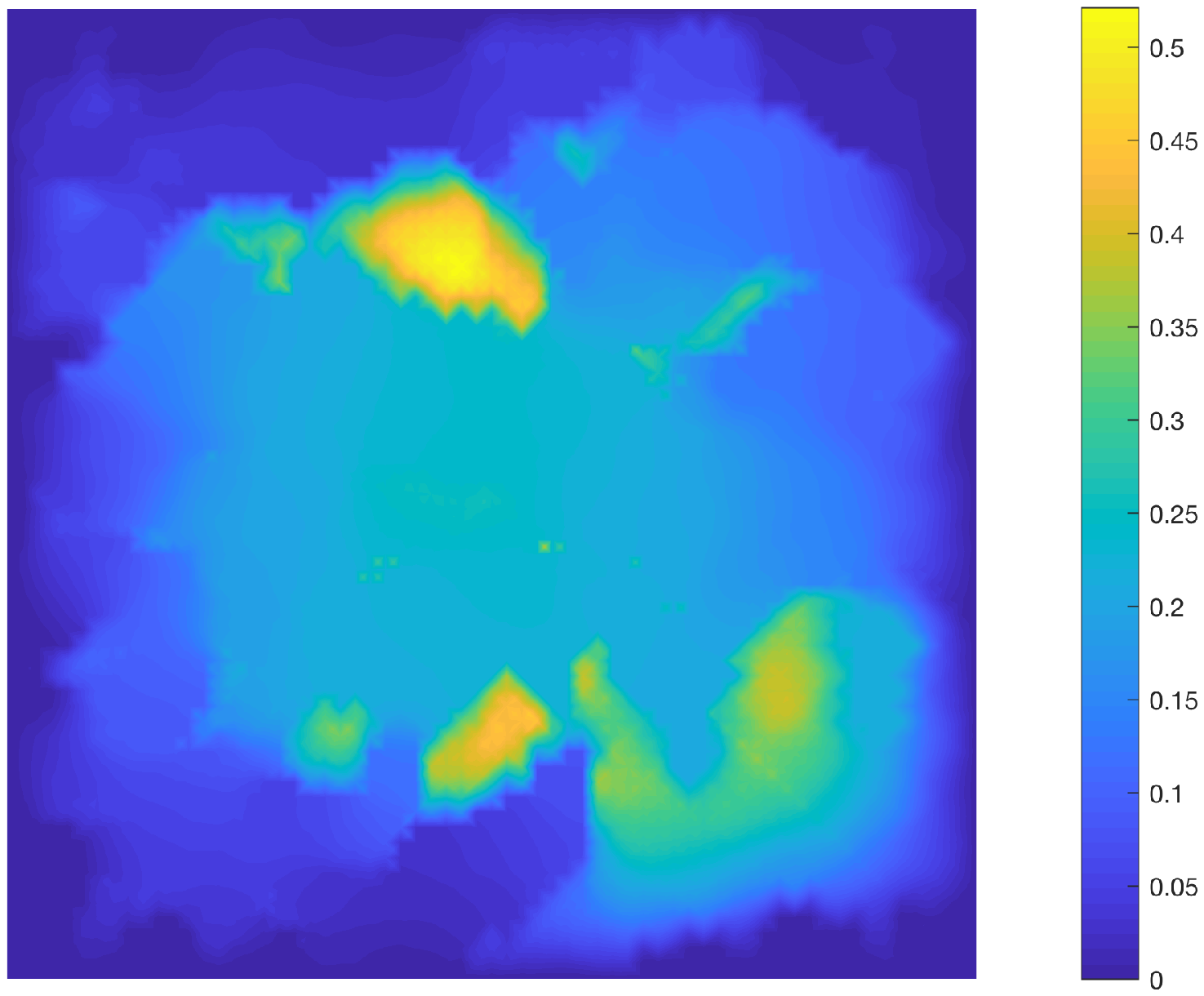}
\label{fig:speFineSolution}
}
\caption{ \protect\subref{fig:permeability} Logarithm of the chosen permeability, 
\protect\subref{fig:speSolution} discrete multiscale solution for $s=0.2$ and $k=2$, and
\protect\subref{fig:speFineSolution} fine scale approximation for $s=0.2$ and $h=2^{-9}$ in the heterogeneous example.
}
\label{fig:permeability:speSolution}
\end{figure}
\begin{figure}
\centering
\includegraphics[width = 0.5\textwidth]{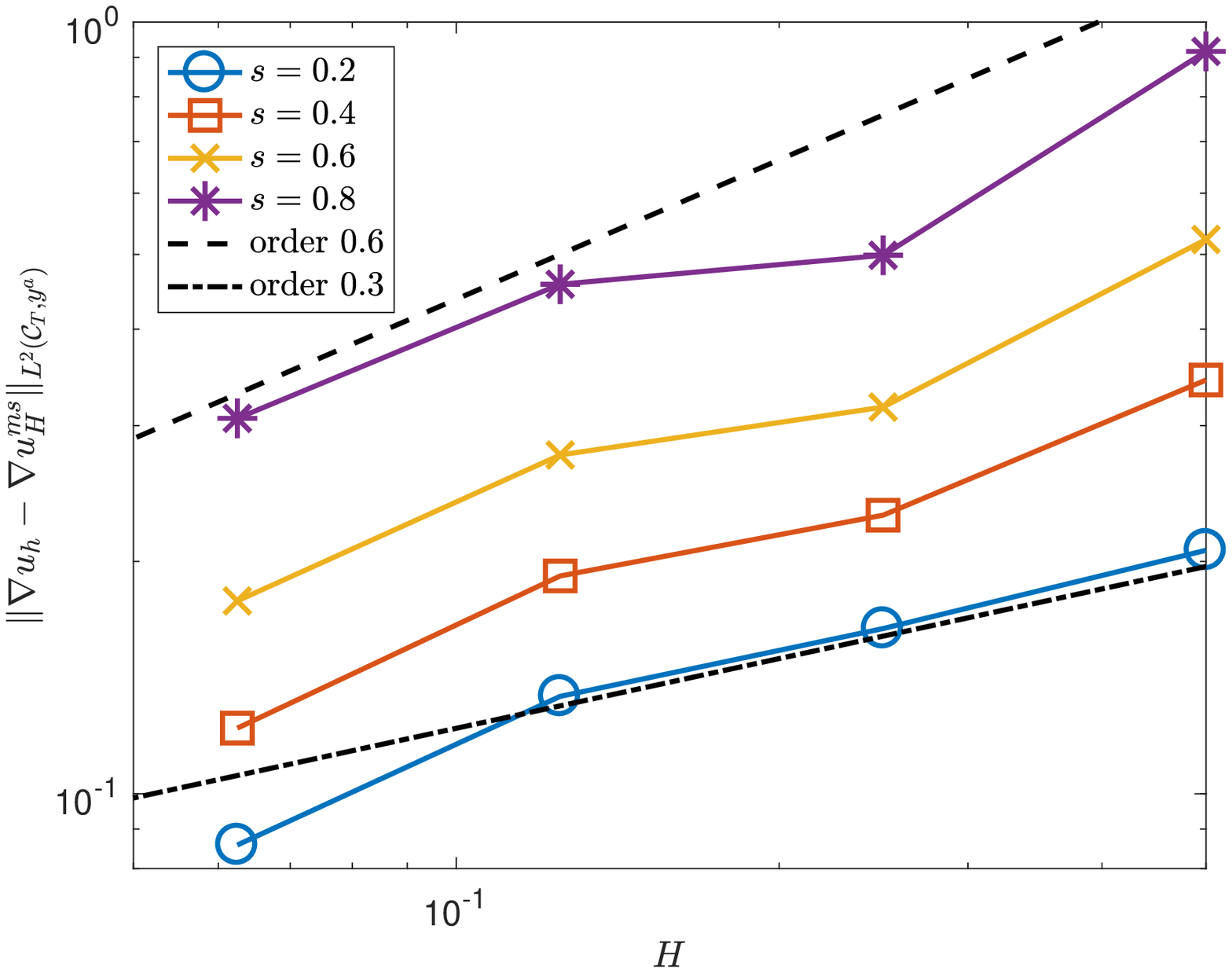}
\caption{Convergence for the heterogeneous example.}
\label{fig:ex2:convergence}
\end{figure}
In this example we choose again $\Omega=(0,1)^2$   so that  $\cilt= (0,1)^2 \times (0,T)$, and
$f(x_1,x_2) = (2\pi^2)^s\sin(\pi x_1)\sin(\pi x_2)$.
However,  we chose a non-constant diffusion coefficient that varies on the fine scale between
$5\cdot 10^{-3}$ and $2\cdot 10^4$. The values are taken from the SPE10 benchmark problem.
The logarithm of the chosen values is displayed in Figure~\ref{fig:permeability}.
A discrete multiscale solution  for $s=0.2$ is displayed in Figure~\ref{fig:speSolution},
and the fine scale approximation for $s=0.2$ and $h=2^{-6}$ is displayed in Figure~\ref{fig:speFineSolution}.
Comparing Figures \ref{fig:speSolution} and \ref{fig:speFineSolution} one can observe that the 
LOD method captures the fine scale features of the solution very well.
We shall emphasize that the theory of localization (Lemma~\ref{localglobal.derp} and Appendix~\ref{truncproofsection}) 
does not allow meaningful predictions on the performance of the multiscale method in the present regime of 
very high contrast. Still, the experimental results are promising. This has also been observed before for 
high-contrast local PDEs in \cite{Peterseim.Scheichl:2016,brown2016multiscale}. The theory therein also indicates 
that the success of numerical homogenization may depend on the geometric properties of the diffusion  
coefficient and its phases relative to the coarse mesh. In particular, a non-monotonic behavior of the error 
may occur depending on the relative position of coarse nodes and high and low permeability regions of the 
medium.
In Figure~\ref{fig:ex2:convergence},  the convergence of the error
$\| \nabla u_h - \nabla u_H^{ms}\|_{L^2(\mathcal{C}_T, y^a)}$ for $H=2^{-1},\ldots,2^{-4}$ and
$h=2^{-6}$ is shown.
Because of the heterogeneous coefficient, we cannot expect 
the local projections ${\cal P}_{\vnode}^{\Omega}$ to be close
to the $\Omega$-global $L^2$ projection,
hence we expect convergence rates of $\mathcal{O}(H^{s})$ from Theorem~\ref{errorlocal}.
As shown in Figure~\ref{fig:ex2:convergence} we indeed observe convergence rates
in the range of $\mathcal{O}(H^{s})$ despite the high contrast
of the diffusion coefficient and the small truncated patches of the corrector problems. 
For $s=0.2$ we even see some minor improved convergence of $\mathcal{O}(H^{0.3})$
due to the boundary projections, 
while for $s=0.8$ the convergence of $\mathcal{O}(H^{0.6})$ 
is lower due to the rough truncation of the corrector problems at layer $k=2$.
Note that the error of the fine-grid solution is probably much higher, so that
higher computational costs for larger $k$ are not justified.
The convergence for $s=0.4$ and $s=0.6$ are in between those values and therefore closer to the predicted 
value $\mathcal{O}(H^{s})$.

\section{Conclusion}
In this paper, we developed a multiscale method for heterogeneous fractional Laplacians. The method utilized 
a localization of multiscale correctors to obtain an efficient numerical scheme with
optimal rates of convergence for the coarse-grid. We 
developed this method in the context of weighted Sobolev spaces to be applied to the extended domain 
problem of the fractional Laplacian where the coefficient of the extension has a singular/degenerate value. To 
this end, we constructed a quasi-interpolation that utilizes averages on $d$ and $d+1$ dimensional subsets so 
that the critical boundary is better resolved. We proved local stability and approximability of this operator in 
weighted Sobolev spaces. We then proved the error estimates and truncation arguments in this weighted setting. To 
confirm our theoretical results we gave two numerical experiments with various fractional orders $s$.

\section{Acknowledgments}
%
%
The second author has been funded by the Austrian Science Fund (FWF) through the
project P 29197-N32.
%
Main parts of this paper were written while the authors enjoyed
the kind hospitality of the Hausdorff Institute for Mathematics in Bonn during the trimester program on 
multiscale problems in 2017.

\appendix
\section{Truncation Proofs}\label{truncproofsection}
Now we will prove and state the auxiliary lemmas used to prove the localized error estimate in Lemma 
\ref{localglobal.derp} and Theorem \ref{errorlocal}.  These proofs are largely based on the works 
\cite{Henning.Morgenstern.Peterseim:2014,MP11} and references therein. There are a few interesting nuances 
with respect to the weighted inverse and Poincar\'{e} inequalities, Muckenhoupt constant bounds, and the  
Caccioppoli inequality Lemma \ref{Caccioppoli}.
\par
We begin with some notation.
For $\vnode,\vnode' \in{\cal N}_{dof}$ and $l,k\in \mathbb{N}$ 
and $m=0,1,\cdots,$ with $k\geq l\geq 2$ we have the quasi-inclusion property:
\begin{align}\label{quasiinclusion}
\text{if}\quad {\omega}_{\vnode',m+1}\cap \left({\omega}_{\vnode,k}\backslash {\omega}_{\vnode,l} \right)\neq \emptyset, 
\quad\text{then}\quad
{\omega}_{\vnode',1}\subset\left({\omega}_{\vnode,k+m+1}\backslash {\omega}_{\vnode,l-m-1} \right).
\end{align}
We will use the cutoff functions defined in \cite{Henning.Morgenstern.Peterseim:2014}. 
For $\vnode\in {\cal N}_{dof}$ and $k>l\in \mathbb{N}$, let $\eta^{k,l}_{\vnode}:\cilt \to [0,1]$ be a continuous 
weakly differentiable function so that 
\begin{subequations}\label{cutoff1}
\begin{align}
  \left(   \eta^{k,l}_{\vnode}  \right)|_{{\omega}_{\vnode,k-l}}&=0,\\
  \left(   \eta^{k,l}_{\vnode}  \right)|_{\cilt \backslash {\omega}_{\vnode,k}}&=1,\\
  \forall T\in {\cal T}_{\cilt}, \norm{\nabla \eta^{k,l}_\vnode}_{L^{\infty}(T)}&\leq C_{co}\frac{1}{l H_{}}, 
\end{align}
\end{subequations}
where $C_{co}$ is only dependent on the shape regularity of the mesh ${\cal T}_{\cilt}$.
We  choose here the cutoff function as in \cite{MP11}, where we choose a function $\eta^{k,l}_{\vnode}$, in the 
space of $\mathbb{P}_{1}$ Lagrange finite elements over ${\cal  T}_{\cilt}$, such that 
\begin{align*}
  \eta^{k,l}_{\vnode}(\vnode')&=0 \text{ for all } \vnode' \in {\cal N}_{dof}\cap \omega_{\vnode,k-l},\nl
  \eta^{k,l}_{\vnode}(\vnode')&=1 \text{ for all } \vnode'\in {\cal N}_{dof}\cap (\cilt\backslash \omega_{\vnode,k}),\nl
  \eta^{k,l}_{\vnode}(\vnode')&=\frac{j}{l} \text{ for all } \vnode'\in {\cal N}_{dof}\cap \omega_{\vnode,k-l+j}, j=0,1, \dots ,l.
\end{align*}
We will now prove the quasi-invariance of the fine-scale functions under multiplication by 
cutoff functions in weighted Sobolev spaces.
 \begin{lemma}\label{qi}
Let $k>l\in \mathbb{N}$ and  $\vnode \in {\cal N}_{dof}$. 
Suppose that $w\in {V}^f$, then we have the estimate
\begin{align*}
 \TwoNorm{\nabla \Qint(\eta_{\vnode}^{k,l} w)   }{\cilt}
 \lesssim l^{-1}\TwoNorm{ \nabla w }{ {\omega}_{\vnode,k+1}\backslash {\omega}_{\vnode,k-l-1} }.
\end{align*}
\end{lemma}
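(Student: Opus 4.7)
The plan is to exploit $\Qint w = 0$ (since $w\in V^f$) in order to gain the decisive $l^{-1}$ factor from the slope of the cutoff $\eta^{k,l}_{\vnode}$. The first observation is that for each node $\vnode'$ and any scalar constant $c$, the linearity of the local projection together with $({\cal P}_{\vnode'} w)(\vnode') = 0$ (which follows from $\Qint w = 0$ and the linear independence of the $\lambda_{\vnode'}$) yields the identity
$({\cal P}_{\vnode'}(\eta^{k,l}_{\vnode} w))(\vnode') = ({\cal P}_{\vnode'}((\eta^{k,l}_{\vnode}-c)w))(\vnode')$,
with an analogous identity on the critical boundary using ${\cal P}^{\Omega}_{\vnode'}$. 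I would then choose $c = c_{\vnode'} := \eta^{k,l}_{\vnode}(\vnode')$, so that by the Lipschitz bound in \eqref{cutoff1} one has $|\eta^{k,l}_{\vnode} - c_{\vnode'}| \leq H\,\norm{\nabla\eta^{k,l}_{\vnode}}_{L^\infty}\lesssim l^{-1}$ pointwise on the patch $\omega_{\vnode',1}$.

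Next, I would localize the support of $\Qint(\eta^{k,l}_{\vnode} w)$. If $\omega_{\vnode',1}\subset\omega_{\vnode,k-l}$ then $\eta^{k,l}_{\vnode}\equiv 0$ on $\omega_{\vnode',1}$ and the nodal coefficient vanishes; if $\omega_{\vnode',1}\cap\omega_{\vnode,k}=\emptyset$ then $\eta^{k,l}_{\vnode}\equiv 1$ on $\omega_{\vnode',1}$ and the identity above with $c=1$ again kills the coefficient. Hence only nodes whose patches meet $\omega_{\vnode,k}\setminus\omega_{\vnode,k-l}$ can contribute, and the quasi-inclusion \eqref{quasiinclusion} (applied with $m=0$) places every such patch inside the shell $S := \omega_{\vnode,k+1}\setminus\omega_{\vnode,k-l-1}$.

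For each contributing node, inserting $u_{\vnode'}:=(\eta^{k,l}_{\vnode}-c_{\vnode'})w$ into the nodal pointwise bound \eqref{nodeinfty} (or \eqref{nodeinfty.bd} for boundary nodes) from inside the proof of Lemma \ref{stablelemma}, then combining with the weighted embedding \eqref{L1-L2estimate}, the Muckenhoupt estimate \eqref{Muckenhouptconstant.patch}, and the basis-gradient bound \eqref{basisfunctionweighted}, yields an estimate of the form $|({\cal P}_{\vnode'}(\eta^{k,l}_{\vnode} w))(\vnode')|\;\TwoNorm{\nabla\lambda_{\vnode'}}{T} \lesssim (lH)^{-1}\TwoNorm{w}{\omega_{\vnode',1}}$. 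Squaring, summing over elements $T\subset S$ (using shape regularity to control overlaps), and finally using the approximability statement \eqref{stableproj1} together with $\Qint w = 0$ to replace $\TwoNorm{w}{\omega_{\vnode',1}}$ by $H\,\TwoNorm{\nabla w}{\omega_{\vnode',2}}$ absorbs the leftover $H^{-1}$ and delivers the claimed $l^{-1}$ rate.

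The main technical obstacle will be the boundary contributions from $\vnode'\in{\cal N}_{\Omega}$: the projection ${\cal P}^{\Omega}_{\vnode'}$ is defined on the $d$-dimensional unweighted face $\partial\omega_{\vnode'}$, so I would need to pass back from an unweighted $L^1$ norm on $\partial\omega_{\vnode'}$ to the weighted $L^2$ norm on $\omega_{\vnode',1}$ via the trace inequality \eqref{traceboundH.L1} of Lemma \ref{traceHlemma.L1}. Carefully tracking the $H$-exponents through this trace step, so that the leftover $H^{-1}$ is still exactly cancelled by the $H$ arising from approximability of $\Qint w=0$, is the delicate point; it mirrors the boundary estimation already performed between \eqref{nodeinfty.bd} and \eqref{L2StableProof} in the proof of Lemma \ref{stablelemma}, so the same template should carry through.
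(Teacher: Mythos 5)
Your proposal is correct and follows essentially the same route as the paper: both exploit $\Qint w=0$ to replace $\eta^{k,l}_{\vnode}$ by $\eta^{k,l}_{\vnode}$ minus a per-patch constant, gain the factor $l^{-1}$ from the $\mathcal{O}((lH)^{-1})$ slope of the cutoff, restrict the sum to the shell via the quasi-inclusion property, and absorb the leftover $H^{-1}$ from the $w\,\nabla\eta^{k,l}_{\vnode}$ contribution through the approximability of $\Qint$ applied to $w$. The only differences are cosmetic: you subtract the nodal value $\eta^{k,l}_{\vnode}(\vnode')$ and use a pointwise Lipschitz bound where the paper subtracts the patch average and bounds the deviation via the inverse estimate, weighted Poincar\'{e} inequality and Muckenhoupt constant, and you unroll the nodal-coefficient machinery of Lemma \ref{stablelemma} where the paper simply invokes the packaged $H^1$-stability \eqref{stableproj0} on the product $(\eta^{k,l}_{\vnode}-\avrg{\eta^{k,l}_{\vnode}}{\omega_{\vnode',1}})w$.
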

\begin{proof}
Fix $\vnode$ and $k$,  denote the average  as 
$\avrg{\eta_{\vnode}^{k,l}}{{\omega}_{\vnode',1} }
=\frac{1}{|{\omega}_{\vnode',1}|} \int_{{\omega}_{\vnode',1}} \eta_{\vnode }^{k,l}\,d\x.$
For an estimate on a single patch ${\omega}_{\vnode'}$, using the fact that $\Qint(w)=0$ and the 
stability \eqref{stableproj0}, we have 
\begin{align*}
  \TwoNorm{\nabla \Qint(\eta_{\vnode}^{k,l}w )}{{\omega}_{\vnode'}}
  &=\TwoNorm{\nabla \Qint((\eta_{\vnode}^{k,l}-\avrg{\eta_{\vnode}^{k,l}}{{\omega}_{\vnode',1}}) w)}{{\omega}_{\vnode'}}
  \lesssim \TwoNorm{\nabla ( (\eta_{\vnode}^{k,l}-\avrg{\eta_{\vnode}^{k,l}}{{\omega}_{\vnode',1}}) w)}{{\omega}_{\vnode',1}}\\
  &\lesssim \left( \TwoNorm{(\eta_{\vnode}^{k,l}-\avrg{\eta_{\vnode}^{k,l}}{{\omega}_{\vnode',1}})\nabla w }{ {\omega}_{\vnode',1} }    
    +   \TwoNorm{\nabla  \eta^{k,l}_{\vnode} (w-\Qint(w)) }{ {\omega}_{\vnode',1}}  \right).
\end{align*}
Summing over all $\vnode'\in {\cal N}_{dof}$ 
and using the quasi-inclusion property \eqref{quasiinclusion} yields 
\begin{align}
 \TwoNorm{\nabla \Qint(\eta_{\vnode}^{k,l} w)}{\cilt}^2 
 &\lesssim  \sum_{{\omega}_{\vnode'}\subset{\omega}_{\vnode,k+1}\backslash {\omega}_{\vnode,k-l-1} } 
  \TwoNorm{ (\eta_{\vnode}^{k,l}-\avrg{\eta_{\vnode}^{k,l}}{{\omega}_{\vnode',1}})\nabla w }{ {\omega}_{\vnode',1} }^2
  \nl \label{summingover}
 &+  \sum_{{\omega}_{\vnode'}\subset{\omega}_{\vnode,k+1}\backslash {\omega}_{\vnode,k-l-1} }     
  \TwoNorm{\nabla  \eta^{k,l}_{\vnode}( w-\Qint(w)) }{ {\omega}_{\vnode',1}}^2.
\end{align}
Note that we used that $\nabla \eta_{\vnode}^{k,l}\neq 0$ only in 
${\omega}_{\vnode,k}\backslash{\omega}_{\vnode,k-l} $ 
and 
$(\eta_{\vnode}^{k,l}-\avrg{\eta_{\vnode}^{k,l}}{{\omega}_{\vnode',1}})\neq0$ 
only if ${\omega}_{\vnode'}$ intersects ${\omega}_{\vnode,k}\backslash{\omega}_{\vnode,k-l} $,
hence we obtained the slightly better bound.
\par
We now denote 
$\mu_{\vnode}^{k,l}=\eta_{\vnode}^{k,l}-\avrg{\eta_{\vnode}^{k,l}}{{\omega}_{\vnode',1}}$, 
and let ${T}$ be a  simplex in ${{\omega}_{\vnode',1}}$
such that the supremum 
$\norm{\mu_{\vnode}^{k,l}}_{L^{\infty}({\omega}_{\vnode',1})}$ is obtained.  
On ${T}$, $\mu_{\vnode}^{k,l}$ is an affine function, 
using the  fact that $\eta_{\vnode}^{k,l}$ is taken to be $\mathbb{P}_{1}$, we have by using the inverse 
estimate \eqref{inftytoL2} that 
\begin{align*}
\norm{\mu_{\vnode}^{k,l}}_{L^{\infty}({\omega}_{\vnode',1})}
=\norm{\mu_{\vnode}^{k,l}}_{L^\infty(T ) }
&\lesssim  |T|^{-1}\norm{y^{-\frac{a}{2}}}_{L^2(T) }  \norm{\mu_{\vnode}^{k,l}}_{L^2(T,y^a ) }.
\end{align*}	
Using the  above estimate and the weighted Poincar\'{e} inequality, we see that
\begin{align}\label{Clip}
\norm{\eta_{\vnode}^{k,l}-\avrg{\eta_{\vnode}^{k,l}}{{\omega}_{\vnode',1}}}_{L^{\infty}({\omega}_{\vnode',1})}
&\lesssim  |{\omega}_{\vnode',1}|^{-1} \norm{y^{-\frac{a}{2}}}_{L^2({\omega}_{\vnode',1}) } 
    \norm{\eta_{\vnode}^{k,l}-\avrg{\eta_{\vnode}^{k,l}}{{\omega}_{\vnode',1}}}_{L^{2}({\omega}_{\vnode',1},y^a)}\nl
&\lesssim  |{\omega}_{\vnode',1}|^{-1} \norm{y^{-\frac{a}{2}}}_{L^2({\omega}_{\vnode',1}) } 
    H\norm{\nabla \eta_{\vnode}^{k,l}}_{L^{2}({\omega}_{\vnode',1},y^a)}\nl
&\lesssim  |{\omega}_{\vnode',1}|^{-1} \norm{y^{-\frac{a}{2}}}_{L^2({\omega}_{\vnode',1}) } 
    H\sum_{T\in \omega_{\vnode',1}}\norm{\nabla \eta_{\vnode}^{k,l}}_{L^{2}(T,y^a)}\nl
&\lesssim  |{\omega}_{\vnode',1}|^{-1} \norm{y^{-\frac{a}{2}}}_{L^2({\omega}_{\vnode',1}) } 
    H\sum_{T\in \omega_{\vnode',1}}  \norm{y^{\frac{a}{2}}}_{L^2(T) }  \norm{\nabla \eta_{\vnode}^{k,l}}_{L^{\infty}(T)}\nl
&\lesssim \left( |{\omega}_{\vnode',1}|^{-1} \norm{y^{-\frac{a}{2}}}_{L^2({\omega}_{\vnode',1}) } 
    \norm{y^{\frac{a}{2}}}_{L^2({\omega}_{\vnode',1}) } \right) H  \norm{\nabla \eta_{\vnode}^{k,l}}_{L^{\infty}({\omega}_{\vnode',1})}\nl
&\lesssim  C_{2,a}^{\frac{1}{2}}H\norm{\nabla \eta_{\vnode}^{k,l} }_{L^{\infty}({\omega}_{\vnode',1})},
\end{align}
where we used the Muckenhoupt weight bound \eqref{Muckenhouptconstant}, as well as quasi-uniformity of 
the grid.
Returning to \eqref{summingover}, using the above relation on the first term and  the approximation property 
\eqref{stableproj1} on the second term, we obtain
\begin{align*}
\TwoNorm{\nabla \Qint(\eta_{\vnode}^{k,l} w)   }{\cilt}^2 
&\lesssim H^2 \norm{\nabla \eta_{\vnode}^{k,l} }_{L^{\infty}(\cilt)}^2 
   \TwoNorm{ \nabla w }{ {\omega}_{\vnode,k+1}\backslash {\omega}_{\vnode,k-l-1} }^2   \\
 &+  H^2 \norm{\nabla \eta_{\vnode}^{k,l} }_{L^{\infty}(\cilt)}^2  
   \TwoNorm{ \nabla w }{ {\omega}_{\vnode,k+1}\backslash {\omega}_{\vnode,k-l-1} }^2 .
\end{align*}
Finally, we arrive at 
\begin{align*}
  \TwoNorm{\nabla \Qint(\eta_{\vnode}^{k,l} w)   }{\cilt}^2 \lesssim l^{-2}
  \TwoNorm{ \nabla w }{ {\omega}_{\vnode,k+1}\backslash {\omega}_{\vnode,k-l-1} }^2,
\end{align*}
where we used $ \norm{\nabla \eta_{\vnode}^{k,l} }_{L^{\infty}(\cilt)}^2\lesssim 1/(lH)^2$.
\end{proof}
For the weighted Sobolev space, we have the following decay of the fine-scale space.
\begin{lemma}\label{decaylemma}
Fix some $\vnode\in {\cal N}_{dof}$ and let $F\in ({V}^f)'$ be the dual of ${V}^f$ 
satisfying $F(w)=0$ for all $w\in {V}^f(\cilt\backslash {\omega}_{\vnode,1})$.  
Let $u\in {V}^f$ be the solution of 
\begin{align*}
    \int_{\cilt}B(x) \nabla u \nabla w \, y^a \,d\x =F(w) \quad\text{for all } w\in {V}^f,
\end{align*}
then there exists a constant $\theta\in (0,1)$ such that for $k\in \mathbb{N}$  we have 
\begin{align*}
    \TwoNorm{\nabla u}{\cilt \backslash {\omega}_{\vnode,k}}\lesssim \theta^k \TwoNorm{\nabla u}{\cilt }.
\end{align*}
\end{lemma}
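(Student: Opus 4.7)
My plan follows the classical cutoff iteration from \cite{MP11,Henning.Morgenstern.Peterseim:2014}, with modifications to accommodate the weighted bilinear form and the hybrid quasi-interpolation. Fix $\vnode \in {\cal N}_{dof}$ and a parameter $l \in \mathbb{N}$ to be chosen large later. For $k \geq l+3$ I introduce the cutoff $\eta := \eta_{\vnode}^{k,l}$ from \eqref{cutoff1}; it vanishes on $\omega_{\vnode,k-l}$, equals one on $\cilt\backslash\omega_{\vnode,k}$, and has gradient bounded by $(lH)^{-1}$ in between. By uniform ellipticity of $B$ and the product rule,
\begin{equation*}
\alpha \TwoNorm{\nabla u}{\cilt\backslash\omega_{\vnode,k}}^2
\leq \int_{\cilt} \eta\, B\nabla u\cdot\nabla u\, y^a\,d\x
= \int_{\cilt} B\nabla u\cdot \nabla(\eta u)\,y^a\,d\x - \int_{\cilt} u B\nabla u\cdot\nabla \eta\, y^a\,d\x.
\end{equation*}

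The next step is to use $w := \eta u - \Qint(\eta u)\in V^f$ as a test function (valid since $\Qint$ is a projection). The Galerkin equation rewrites the first integral on the right as $F(w) + \int_{\cilt} B\nabla u \cdot \nabla \Qint(\eta u)\, y^a\,d\x$. I claim $F(w) = 0$: because $\eta u \equiv 0$ on $\omega_{\vnode,k-l}$, the local projections ${\cal P}_{\vnode'}(\eta u)$ and ${\cal P}_{\vnode'}^{\Omega}(\eta u)$ vanish for every node $\vnode'$ whose projection patch lies inside $\omega_{\vnode,k-l}$; by the quasi-inclusion property \eqref{quasiinclusion} this forces $\Qint(\eta u)$, and hence $w$, to vanish identically on $\omega_{\vnode,k-l-2}$, which contains $\omega_{\vnode,1}$ whenever $k \geq l + 3$, so the hypothesis on $F$ applies.

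It remains to estimate the two surviving integrals by a norm of $\nabla u$ on thin annular regions. For the boundary term, $\nabla\eta$ is supported in $\omega_{\vnode,k}\backslash\omega_{\vnode,k-l}$; combining Cauchy-Schwarz with the approximation bound \eqref{stableproj1} (applied to $u$ using $\Qint u = 0$, together with a standard covering and finite-overlap argument) yields $\TwoNorm{u}{\omega_{\vnode,k}\backslash\omega_{\vnode,k-l}} \lesssim H\TwoNorm{\nabla u}{A_{k,l}}$ with $A_{k,l} := \omega_{\vnode,k+1}\backslash\omega_{\vnode,k-l-2}$, and hence the boundary term is bounded by $C l^{-1}\TwoNorm{\nabla u}{A_{k,l}}^2$. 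For the interpolation term, the quasi-invariance estimate Lemma \ref{qi} applied to $u \in V^f$ gives $\TwoNorm{\nabla\Qint(\eta u)}{\cilt} \lesssim l^{-1}\TwoNorm{\nabla u}{A_{k,l}}$; since $\Qint(\eta u)$ vanishes on $\omega_{\vnode,k-l-2}$, Cauchy-Schwarz then delivers $|\int B\nabla u\cdot\nabla\Qint(\eta u)\,y^a| \lesssim l^{-1}\TwoNorm{\nabla u}{\cilt\backslash\omega_{\vnode,k-l-2}}\TwoNorm{\nabla u}{A_{k,l}}$.

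Writing $b_j := \TwoNorm{\nabla u}{\cilt\backslash\omega_{\vnode,j}}^2$ and using $\TwoNorm{\nabla u}{A_{k,l}}^2 = b_{k-l-2} - b_{k+1}$, a Young's inequality consolidates these bounds into the iterative relation $b_{k+1} \leq \rho_l\, b_{k-l-2}$ with $\rho_l = O(1/l) \to 0$ as $l \to \infty$. Fixing $l$ once and for all so that $\rho_l < 1$, then iterating in steps of $l+3$ and using monotonicity $b_{j+1} \leq b_j$ yields $b_k \lesssim \theta^{2k}\, b_0$ for some $\theta = \theta(l) \in (0,1)$, producing the claimed bound. The main obstacle I anticipate is the careful bookkeeping in the second paragraph: verifying $F(w) = 0$ hinges on the strict locality of both the volume projections ${\cal P}_{\vnode'}$ and the boundary projections ${\cal P}_{\vnode'}^{\Omega}$ composing $\Qint$, and tracking the support of $\Qint(\eta u)$ through the hybrid interpolation. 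The weighted character of the norms enters only through Lemma \ref{qi}, the weighted Poincar\'{e} inequality (Lemma \ref{poincare2}), and the weighted approximation estimate (Lemma \ref{stablelemma}), all of which have already been established.
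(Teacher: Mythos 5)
Your proposal is correct and follows essentially the same route as the paper's proof: the cutoff $\eta_{\vnode}^{k,l}$, the test function $\eta u-\Qint(\eta u)\in V^f$ killed by $F$ through its support, the ellipticity/product-rule (Caccioppoli-type) bound, the quasi-invariance Lemma \ref{qi} together with \eqref{stableproj1} to control the annulus terms, and the geometric iteration. The only differences are cosmetic bookkeeping (annulus indices off by one layer, and fixing $l$ up front so the contraction factor is below one rather than carrying $l$ into the final choice of $\theta$).
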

\begin{proof}
Let $\eta_{\vnode}^{k,l}$ be the cut-off function as in the previous lemma for $l<k-1$,
$\tilde{u}=\eta_{\vnode}^{k,l} u -\Qint(\eta_{\vnode}^{k,l} u )\in {V}^f(\cilt\backslash {\omega}_{\vnode,k-l-1})$, 
and note that from Lemma \ref{qi} we have 
\begin{align}\label{qiestimate}
\TwoNorm{\nabla( \eta_{\vnode}^{k,l}u-\tilde{u})}{\cilt }
=\TwoNorm{\nabla \Qint(\eta_{\vnode}^{k,l} u ) }{\cilt }
\lesssim  l^{-1}\TwoNorm{ \nabla u }{ {\omega}_{\vnode,k+1}\backslash {\omega}_{\vnode,k-l-1} }.
\end{align}
From this estimate and the properties of $F$ we have 
\begin{align}\label{relation}
\int_{\cilt\backslash {\omega}_{\vnode,k-l-1}} B(x) \nabla u \nabla \tilde{u}\, y^a \,d\x
=\int_{\cilt}B(x) \nabla u \nabla \tilde{u} \, y^a \,d\x=F(\tilde{u})=0.
\end{align}
We utilize a version of the Caccioppoli inequality 
from Lemma \ref{Caccioppoli} to deduce
\begin{align*}
\TwoNorm{\nabla u}{\cilt\backslash {\omega}_{\vnode,k}}^2
&\lesssim \int_{\cilt\backslash {\omega}_{\vnode,k-l-1}} \eta_{\vnode}^{k,l} B(x)\nabla u \nabla u  \, y^a\,d\x \\
&\lesssim  \int_{\cilt\backslash {\omega}_{\vnode,k-l-1}} \nabla u\left(\nabla (\eta_{\vnode}^{k,l} u ) -u\nabla \eta_{\vnode}^{k,l}\right)\, y^a\,d\x.
\end{align*}	
Using the fact that $\Qint(u)=0$, estimate \eqref{qiestimate}, and  the relation \eqref{relation} we have 
\begin{align*}
&\TwoNorm{\nabla u}{\cilt\backslash {\omega}_{\vnode,k}}^2
\lesssim  \int_{\cilt\backslash {\omega}_{\vnode,k-l-1}} \nabla u(\nabla (\eta_{\vnode}^{k,l} u -\tilde{u}))\, y^a \,d\x
-\int_{\cilt\backslash {\omega}_{\vnode,k-l-1}}\nabla u(u-\Qint(u))\nabla \eta_{\vnode}^{k,l} \, y^a \,d\x\\
&\qquad\lesssim   l^{-1}\TwoNorm{ \nabla u }{ \cilt \backslash {\omega}_{\vnode,k-l-1} }^2
+(l H)^{-1}\TwoNorm{ \nabla u }{ \cilt \backslash {\omega}_{\vnode,k-l-1} }\TwoNorm{  u -\Qint(u)}{ \cilt \backslash {\omega}_{\vnode,k-l-1} }    \\
&\qquad\lesssim l^{-1}  \TwoNorm{ \nabla u }{ \cilt \backslash {\omega}_{\vnode,k-l-1} }^2.
\end{align*}	
On the last term we used the approximation property \eqref{stableproj1}.
Successive applications of 
the above estimate leads to 
\begin{align*}
\TwoNorm{\nabla u}{\cilt\backslash {\omega}_{\vnode,k}}^2&\lesssim  l^{-1} \TwoNorm{ \nabla u }{ \cilt \backslash {\omega}_{\vnode,k-l-1} }^2
\lesssim  l^{- \lfloor \frac{k-1}{l+1}  \rfloor} \TwoNorm{ \nabla u }{ \cilt \backslash {\omega}_{\vnode,1} }^2
\lesssim  l^{- \lfloor \frac{k-1}{l+1}  \rfloor} \TwoNorm{ \nabla u }{ \cilt  }^2.
\end{align*}
Finally, noting that
\[
  \left\lfloor \frac{k-1}{l+1}  \right\rfloor = \left\lceil \frac{k-l-1}{l+1}  \right\rceil \geq \frac{k}{l+1} -1,
\]
taking $\theta=l^{-\frac{1}{l+1}}$ yields the result.
\end{proof}
\par
We are now ready to restate our result on the error introduced from localization. 
This is merely Lemma \ref{localglobal.derp} restated.
When $k$ is sufficiently large so that the corrector problem is all of $\cilt$, 
we denote $Q_{\vnode,k}=Q_{\vnode,\cilt}$.
Let $u_{H}\in {V}_{H}$, let $Q_{k}$ be constructed from \eqref{Qcorrector}, 
and $Q_{\cilt}$ defined to be the ideal corrector without truncation, then 
\begin{align}\label{localglobaleq}
    \TwoNorm{\nabla( Q_{\cilt}(u_{H})-Q_{k}(u_{H}))  }{\cilt}\lesssim  k^{\frac{d}{2}} \theta^{k} \TwoNorm{ \nabla u_{H} }{\cilt}.
\end{align}
We begin the proof in a similar way as in \cite{brown2016multiscale}.
\begin{proof}[Proof of Lemma \ref{localglobal.derp}]
We denote $v= Q_{\cilt}(u_{H})-Q_{k}(u_{H})\in {V}^f,$ subsequently $\Qint(v)=0$.
Taking the cut-off function $\eta_{\vnode}^{k,1}$ we have 
\begin{align}
\label{term1}
\TwoNorm{\nabla v}{\cilt}^2
&\lesssim \sum_{\vnode\in {\cal N}_{dof}}\int_{\cilt}B(x) \nabla( Q_{\vnode,\cilt}(u_{H})-Q_{\vnode,k}(u_{H}))\nabla (v(1-\eta_{\vnode}^{k,1}))\, y^a\,d\x \\
\label{term2}
&+\sum_{\vnode\in {\cal N}_{dof}}\int_{\cilt}B(x) \nabla( Q_{\vnode,\cilt}(u_{H})-Q_{\vnode,k}(u_{H}))\nabla (v\eta_{\vnode}^{k,1})\, y^a \,d\x.
\end{align}
Estimating the right hand side of \eqref{term1} for each $\vnode$,
and using the boundedness of $B(x)$, we have
\begin{align*}
&\int_{\cilt}B(x)\nabla( Q_{\vnode,\cilt}(u_{H})-Q_{\vnode,k}(u_{H}))\nabla (v(1-\eta_{\vnode}^{k,1}))\, y^a \,d\x\\
&\qquad\lesssim \TwoNorm{\nabla( Q_{\vnode,\cilt}(u_{H})-Q_{\vnode,k}(u_{H}))}{\cilt} \TwoNorm{\nabla (v(1-\eta_{\vnode}^{k,1}))}{{\omega}_{\vnode,k}}\\
&\qquad\lesssim  \TwoNorm{\nabla( Q_{\vnode,\cilt}(u_{H})-Q_{\vnode,k}(u_{H}))}{\cilt} \left(\TwoNorm{\nabla v}{{\omega}_{\vnode,k}}
+  \TwoNorm{v\nabla (1-\eta_{\vnode}^{k,1}))}{{\omega}_{\vnode,k}\backslash {\omega}_{\vnode,k-1}}   \right)\\
&\qquad\lesssim \TwoNorm{\nabla( Q_{\vnode,\cilt}(u_{H})-Q_{\vnode,k}(u_{H}))}{\cilt} \left(\TwoNorm{\nabla v}{{\omega}_{\vnode,k}} 
+  H^{-1}\TwoNorm{v-\Qint(v)}{{\omega}_{\vnode,k}\backslash {\omega}_{\vnode,k-1}}   \right)\\
&\qquad\lesssim  \TwoNorm{\nabla( Q_{\vnode,\cilt}(u_{H})-Q_{\vnode,k}(u_{H}))}{\cilt} \TwoNorm{\nabla v}{{\omega}_{\vnode,k+1}}.
\end{align*}
As in the proof of Lemma \ref{decaylemma}, we denote $\tilde{v}=\eta_{\vnode}^{k,1} v -
\Qint(\eta_{\vnode}^{k,1} v )\in {V}^f(\cilt)$ 
and so $\tilde{v}$ satisfies  
\begin{align*}
    \int_{\cilt}B(x) \nabla( Q_{\vnode,\cilt}(u_{H})-Q_{\vnode,k}(u_{H}))\nabla\tilde{v}   \, y^a \,d\x=0.
\end{align*}
We have now the estimate for \eqref{term2} for $\vnode\in {\cal N}_{dof}$ 
using the above identity and \eqref{qiestimate}
\begin{align*}
&\int_{\cilt}B(x) \nabla( Q_{\vnode,\cilt}(u_{H})-Q_{\vnode,k}(u_{H}))\nabla (v\eta_{\vnode}^{k,1}-\tilde{v})\, y^a \,d\x\\
&\qquad\lesssim \TwoNorm{ \nabla( Q_{\vnode,\cilt}(u_{H})-Q_{\vnode,k}(u_{H}))  }{ \cilt   }\TwoNorm{ \nabla (v\eta_{\vnode}^{k,1}-\tilde{v})  }{ \cilt  }\\
&\qquad\lesssim  \TwoNorm{ \nabla( Q_{\vnode,\cilt}(u_{H})-Q_{\vnode,k}(u_{H}))  }{ \cilt  }\TwoNorm{ \nabla v }{ {\omega}_{\vnode ,k+1 } }.
\end{align*}
Combing the estimates for \eqref{term1} and \eqref{term2} we obtain 
\begin{align}
\TwoNorm{\nabla v}{\cilt}^2
&\lesssim \sum_{\vnode\in {\cal N}_{dof}} 
\TwoNorm{\nabla( Q_{\vnode,\cilt}(u_{H})-Q_{\vnode,k}(u_{H}))}{\cilt}  \TwoNorm{\nabla v}{{\omega_{\vnode,k+1}}}\nonumber\\
\label{vestimate}
&\lesssim  k^{\frac{d}{2}} \left(\sum_{\vnode\in {\cal N}_{dof}}
\TwoNorm{ \nabla( Q_{\vnode,\cilt}(u_{H})-Q_{\vnode,k}(u_{H}))  }{ \cilt  }^2\right)^{\frac{1}{2}}\TwoNorm{ \nabla v }{\cilt },
\end{align}
supposing that 
$\# \{\vnode\in {\cal N}_{dof}|{\omega}_{\vnode'}\subset {\omega}_{\vnode,k+1}\}\lesssim k^{d}$,
as is guaranteed by quasi-uniformity of the coarse-grid.
\par
For $\vnode \in{\cal N}_{dof}$, we estimate 
$\TwoNorm{ \nabla( Q_{\vnode,\cilt}(u_{H})-Q_{\vnode,k}(u_{H}))  }{ \cilt  }$ and 
we use the Galerkin orthogonality of the local problem, that is 
\begin{align*}
\TwoNorm{ \nabla( Q_{\vnode,\cilt}(u_{H})-Q_{\vnode,k}(u_{H}))  }{ \cilt  }
\leq \inf_{q_\vnode\in {V}^f({\omega}_{\vnode,k}) } \TwoNorm{ \nabla( Q_{\vnode,\cilt}(u_{H})-q)  }{ \cilt  }.
\end{align*}
Let 
$q_{\vnode}=(1-\eta^{(k-1),1}_{\vnode})Q_{\vnode,\cilt}(u_{H})-\Qint((1-\eta^{(k-1),1}_{\vnode})Q_{\vnode,\cilt}(u_{H})  )\in {V}^f({\omega}_{\vnode,k})$, 
we have
\begin{align*}
&\TwoNorm{ \nabla( Q_{\vnode,\cilt}(u_{H})-Q_{\vnode,k}(u_{H}))  }{ \cilt  }^2\\
&\qquad\leq \TwoNorm{ \nabla( \eta^{(k-1),1}_{\vnode}Q_{\vnode,\cilt}(u_{H})-\Qint((1-\eta^{(k-1),1}_{\vnode})Q_{\vnode,\cilt}(u_{H})  )) }{ \cilt  }^2\\
&\qquad\lesssim \TwoNorm{ \nabla Q_{\vnode,\cilt}(u_{H}) }{\cilt \backslash {\omega}_{\vnode,k-2} }^2
+ \TwoNorm{ \nabla(\Qint(\eta^{(k-1),1}_{\vnode}Q_{\vnode,\cilt}(u_{H})  )) }{ \cilt  }^2.
\end{align*}
Using Lemma \ref{qi} and Lemma \ref{decaylemma} on the second term we arrive at 
\begin{align*}
&\TwoNorm{ \nabla( Q_{\vnode,\cilt}(u_{H})-Q_{\vnode,k}(u_{H}))  }{ \cilt  }^2\\
&\qquad\lesssim \TwoNorm{ \nabla Q_{\vnode,\cilt}(u_{H}) }{ \cilt \backslash {\omega}_{\vnode,k-2} }^2
+ \TwoNorm{ \nabla Q_{\vnode,\cilt}(u_{H})  ) }{ {\omega}_{\vnode,k} \backslash {\omega}_{\vnode,k-3}   }^2\\
&\qquad\lesssim  \TwoNorm{ \nabla Q_{\vnode,\cilt }(u_{H}) }{ \cilt \backslash {\omega}_{\vnode,k-3} }^2\\
&\qquad\lesssim  \theta^{2(k-3) } \TwoNorm{ \nabla Q_{\vnode,\cilt}(u_{H}) }{ \cilt  }^2.
\end{align*} 
From the definition of $Q_{\vnode,\cilt}$ from \eqref{Qcorrector} with global corrector patches,
we get
\begin{align*}
\TwoNorm{ \nabla( Q_{\vnode,\cilt}(u_{H})-Q_{\vnode,k}(u_{H}))  }{ \cilt  }^2
\lesssim \theta^{2k} \TwoNorm{ \nabla u_{H} }{ {\omega}_{\vnode} }^2.
\end{align*}
Thus, summing over all $\vnode\in {\cal N}_{dof}$  and combining the above with \eqref{vestimate} concludes the proof.
\end{proof}

\bibliographystyle{abbrv}
\bibliography{HMP_references.bib}

\end{document}